\documentclass[11pt,leqno]{article}
\usepackage{mymacros}
\usepackage{physics}
\usepackage{cite,color}
\usepackage[normalem]{ulem}
\usepackage{tikz}
\usetikzlibrary{arrows.meta, positioning}

\usepackage{tocloft}

\newtheorem{question}{Question}

\newcommand{\PP}{\bm{P}}

\newcommand{\wick}[1]{\mathopen{:}#1\mathclose{:}}

\newcommand{\Eg}{\mathsf{E}}

\newcommand{\1}{{\bf 1}}
\newcommand{\hShG}{{\rm hShG}}
\newcommand{\hGFF}{{\rm hGFF}}
\newcommand{\eff}{{\rm eff}}
\renewcommand{\var}{\operatorname{Var}}

\newcommand{\eps}{\epsilon}
\newcommand{\bZ}{\mathbb{Z}}
\newcommand{\bR}{\mathbb{R}}

\title{Mass gap for the hierarchical sinh-Gordon model}
\author{Omar Abdelghani\footnote{Courant Institute of Mathematical Sciences, New York University. E-mail: {\tt oa2391@nyu.edu}.}
  \and Roland Bauerschmidt\footnote{Courant Institute of Mathematical Sciences, New York University. E-mail: {\tt bauerschmidt@cims.nyu.edu}.}
 \footnote{Current address: Institut f\"ur Angewandte Mathematik, Universit\"at Bonn. E-mail: {\tt bauerschmidt@uni-bonn.de}.}
  \and Michael Hofstetter\footnote{Universit\"at Wien. E-mail: {\tt michael.hofstetter@univie.ac.at}.}
  \and Ofer Zeitouni\footnote{Weizmann Institute. E-mail: {\tt ofer.zeitouni@weizmann.ac.il}.}}
\date{September 15, 2026}

\begin{document}
\setcounter{page}{0}
\newpage

\maketitle
\begin{abstract}
  The (massless) sinh-Gordon model is defined in terms of a continuum massless Gaussian free field perturbed by a cosh interaction.
  For the hierarchical version of the model, we prove existence of the infinite volume limit, a uniform log-Sobolev inequality, and a mass gap.
  Our results hold for all $b^2 \in (0,1)$ and simplify for $b^2 \in (0,1/2)$ for which the Gaussian multiplicative chaos has two moments. 
  In an appendix, our renormalization group analysis is compared with the conjectures and controversies in physics which are mostly based on formal
  analytic continuation of conjectures for the sine-Gordon model and we raise some
  questions. 
\end{abstract}

\setcounter{tocdepth}{2}
\tableofcontents

\section{Introduction and main results}

\subsection{Introduction}

Despite its convex action, the definition and behavior of the sinh-Gordon model
are surprisingly confusing and subtle, and remain debated even in physics.
This paper clarifies some aspects from the probabilistic point of view---at least in the hierarchical version of the model.

From the probabilistic perspective, the sinh-Gordon model should be defined by the formal path integral measure
\begin{equation} \label{e:shG-cont1}
  \text{``}\exp\qa{\frac{1}{4\pi} \int_{\R^2} \varphi (\Delta \varphi) \,dx - \frac{\mu}{4b^2} \int_{\R^2} \wick{\cosh(2b \varphi)} \, dx} \, d\varphi\text{''}
\end{equation}
where the first term corresponds to the Gaussian free field normalized such that its covariance satisfies $C(x,y) \sim \log 1/|x-y|$ as $|x-y|\to 0$,
and the parameter ranges $0 < b^2 < 1/2$ and $0 < b^2 < 1$ correspond to the $L^2$ and $L^1$ regimes, respectively, of the Gaussian multiplicative chaos
$\wick{\cosh(2b\varphi)}$, while $b^2=1$ corresponds to the ``self-dual point''.
There is no explicit mass term in \eqref{e:shG-cont1} and the massless Gaussian free field by itself cannot be defined on the full plane (only up to constants).
For $b^2 \geq 1$ there is a debate in physics about a zero mode that might have to be properly included.
As discussed later, this zero mode is not relevant in the massive regime $b^2<1$ that we focus on.

The notation $\wick{\cdot}$ denotes a multiplicative renormalization and, from the probabilistic perspective,
the measure \eqref{e:shG-cont1} should be obtained as the $\epsilon \to 0$
and $\Lambda \to \R^2$ limit of its regularized version in which $\varphi_\epsilon$ is some regularization of $\varphi$ at distances $\epsilon$ and
the $\wick{\cosh}$ term is interpreted as
\begin{equation} \label{e:shG-cont2}
  \frac{\mu_\epsilon}{4b^2} \int_\Lambda  \cosh(2b\varphi_\epsilon) \, dx, \qquad \mu_\epsilon = \mu \epsilon^{2b^2},
\end{equation}
which, as discussed below, is the correct choice for $b^2 < 1$.
%

The variant of the sinh-Gordon model with an additional explicit mass term $\frac12 m^2 \varphi^2$
is a version of the H{\o}egh-Krohn model (or ``massive'' sinh-Gordon model)
and is very well understood for all $b^2 \in (0,1)$; see \cite{MR0489552,MR356761} and \cite{MR4528973,2512.18927} for recent references.
Even more elementary is the (massless or massive) model on the unit lattice, where no counterterm is needed
and which can thus be analyzed in a straightforward way for any $b^2>0$ by methods for uniformly convex potentials such as the Bakry--\'Emery criterion,
the Helffer--Sj\"ostrand representation, or the Brydges--Fr\"ohlich--Spencer random walk representation.
On the other hand, there is no full construction of the continuum massless model, and its behavior is debated even in physics.
Even though $\cosh$ is a uniformly convex function, the strict convexity of the interaction is lost as $\epsilon \to 0$
due to the multiplicative counterterm.
Nonetheless, it is natural to expect that the limiting continuum model has a strictly positive physical mass,
in the sense of correlation decay and a uniform log-Sobolev constant---in analogy with its lattice version.

In fact, the continuum model is expected to be integrable
and explicit conjectures for the mass and the one-point functions have been proposed for $b^2 \in (0,1)$.
These conjectures are not based on a direct analysis of the path integral \eqref{e:shG-cont1} and rather follow by combination
of various nonrigorous methods such as formal analytic continuation of
conjectures for the sine-Gordon model and deformation of Liouville conformal field theory, as well as consistency with the semiclassical limit $b \to 0$;
see \cite{MR4258290} for a review.
Rigorous progress has been made on the $S$-matrix bootstrap program for the sinh-Gordon model \cite{MR4680395,MR4607722},
but the relation to the probabilistic construction of the sinh-Gordon model remains completely unclear.
The different analytic structure of the sine- and sinh-Gordon interactions has also led to some doubts on the conjectures
in the region $1/2 \leq b^2 < 1$, see \cite[Section~2.2.7]{MR4258290} and Appendix~\ref{app:physics} for a discussion.
The correct definition and behavior of the model for $b^2\geq 1$ remains a topic of an interesting debate.
From a mathematical perspective, the analytic continuation from $\cos$ to $\cosh$ interaction seems completely unclear for any $b$,
and a probabilistic construction that uses the convexity of $\cosh$ would seem natural and desirable.

Despite recent mathematical progress \cite{2408.16574,2408.16649,MR5055747}, there is no complete probabilistic construction of the sinh-Gordon model on $\R^2$ yet.
Hierarchical models have a long history in the context of the renormalization group, see \cite{MR693402,MR3269690} for reviews
and \cite{MR3969983} for an introduction with emphasis on parallels with the Euclidean models.
Recent works also include \cite{Vilas}.
Not all quantum field theories are accurately captured by their hierarchical analogs,
but in superrenormalizable scalar theories with finite field strength renormalization
(as the sinh-Gordon model for $b^2\in (0,1)$)
they are expected and in many cases confirmed to be excellent prototypes.
Hierarchical models, which appear there as branching random walks or multiplicative cascades, also have provided important stepping stones
in the study of the Gaussian multiplicative chaos \cite{MR829798}, which is an important ingredient in our analysis 
of the hierarchical sinh-Gordon model.

\subsection{Branching random walk and hierarchical free field}
\label{sec:hierGFF}

In the hierarchical sinh-Gordon model, the Gaussian free field is replaced by its hierarchical version -- a bi-infinite branching random walk (BRW)
whose covariance operator is the inverse of the hierarchical Laplacian $\Delta_H$ introduced now.
Further details are included in Appendix~\ref{app:hier}.

\begin{figure}
  \begin{center}\resizebox{8cm}{!}{
\begin{tikzpicture} 

\draw[step=0.125cm,gray,very thin] (1,6) grid (2,7);
\draw[line width=4pt] (0,0) -- (8,0) -- (8,8) -- (0,8) -- cycle;

\draw[line width = 2.5pt] (4,0) -- (4,8);
\draw[line width = 2.5pt] (0,4) -- (8,4);

\draw[line width = 1.8pt] (0,2) -- (8,2);
\draw[line width = 1.8pt] (0,6) -- (8,6);
\draw[line width = 1.8pt] (2,0) -- (2,8);
\draw[line width = 1.8pt] (6,0) -- (6,8);

\draw (1,0) -- (1,8);
\draw (3,0) -- (3,8);
\draw (5,0) -- (5,8);
\draw (7,0) -- (7,8);

\draw (0,1) -- (8,1);
\draw (0,3) -- (8,3);
\draw (0,5) -- (8,5);
\draw (0,7) -- (8,7);

\draw[thick,-{Stealth[length=2mm]}] (10,7.5) -- (7.5,7.5) node[right=4cm, anchor=east ] {$0$-block};
\draw[thick,-{Stealth[length=2mm]}] (10,5.5) -- (7,5.5) node[right=4.5cm, anchor=east ] {$1$-block};
\draw[thick,-{Stealth[length=2mm]}] (10,2.5) -- (6,2.5) node[right=5.5cm, anchor=east ] {$2$-block};
\draw[thick,-{Stealth[length=2mm]}] (10,0.5) -- (8.07,0.5) node[right=3.5cm, anchor=east ] {$3$-block};

\end{tikzpicture}
}\end{center}
  \caption{Illustration of the continuum hierarchical lattice with $d=2$ and $L=2$.
    Each $0$-block (unit block) contains nested $j$-blocks, where $j=-1,\dots,-N$,
    and is contained in arbitrarily many larger $j$-blocks, where $j=1,\dots,M$.
    The continuum limit corresponds to $-N\to-\infty$
    and the infinite volume limit to $M\to\infty$.
    This block division of $\R^d$ is equivalent to the tree associated with a branching random walk with branching rate $L^d$.
    The nodes at height $j$ are the scale-$j$ blocks.
    The branching random walk itself is defined in terms of independent Gaussian random variables of variance $L^{-dj}\gamma_j \approx L^{-(d-2)j}$ when $d=2$
    on the nodes of the tree (blocks).
    The massive version has increments with variances $L^{-dj} \gamma_j(m) \approx L^{-(d-2)j}(1+O(m^2L^{2j}))^{-1}$, see \eqref{e:gammam}.
    \label{fig:hier}}
\end{figure}


Given a fixed integer $L > 1$ whose value is not important (and one could for example choose $L=2$ for concreteness),
for every $L$-adic scale $j\in \Z$,
the space $\R^d$ is divided into blocks $B = L^j(x + [0,1)^d)$ where $x\in \Z^d$,
see Figure~\ref{fig:hier}. The set of scale-$j$ blocks is denoted $\cB_j$.
The hierarchical Laplacian and its inverse can be obtained by replacing the Fourier basis in the representation of the usual Laplacian
by essentially the basis of Haar wavelets associated with these blocks.
In the notation of \cite[Section~4]{MR3969983} and Appendix~\ref{app:hier},
the hierarchical Laplacian acting on $f: \R^d \to \R$ has the spectral representation:
\begin{equation}
  - \Delta_H = c_L \sum_{j \in \Z} \lambda_j P_j, \qquad P_j=Q_{j-1} - Q_{j}, \qquad \lambda_j = L^{-2j},
\end{equation}
where the $Q_j$ are block averaging operators onto the scale-$j$ blocks,
i.e., $Q_j f(x)$ is equal to the average of $f$ over the $j$-block containing $x$,
and the $P_j$ are orthogonal projections onto mean $0$ functions on the $j$-blocks.
The constant $c_L$ is chosen in \eqref{eq:var-normalisation} below to match the Euclidean normalization.
The system of projections $P_j$ forms a projection-valued measure on $L^2(\bR^d)$ by an elementary application of the dominated convergence theorem.
This is a hierarchical Fourier decomposition of $\Delta_H$ with $\lambda_j$ analogous to the Fourier multiplier $\lambda(k) = |k|^2$ associated with the Euclidean Laplacian.
Correspondingly, the resolvent of $\Delta_H$ has the spectral decomposition
\begin{equation}
\label{eq:CH-m-spectral}
  (-\Delta_H+m^2)^{-1} = \sum_{j\in \Z} (c_L\lambda_j+m^2)^{-1} P_j
  = \sum_{j\in \Z} \gamma_j(m) Q_{j},
\end{equation}
where (with the last estimate for $m^2 L^{2j} \ll 1$)
\begin{equation} \label{e:gammam}
  \gamma_j(m) = (c_L\lambda_{j+1}+m^2)^{-1}- (c_L\lambda_{j}+m^2)^{-1}
  =c_L^{-1} L^{2j} (L^2-1+O(m^2L^{2j}))
  .
\end{equation}
To match the Euclidean normalization, see \eqref{e:GreenH-log} below, we now choose
\begin{equation}
\label{eq:var-normalisation}
c_L =(L^2-1)/\log L,
\end{equation}
where $\log$ is the natural logarithm, so that $\gamma_j(m)=L^{2j} (\log L)(1+O(m^2 L^{2j}))$.
For $x,y \in \bR^d$, $x\neq y$, it follows from the above that
\begin{equation}
  (-\Delta_H+m^2)^{-1}(x,y) = \sum_{j(x,y) \leq j} L^{-dj}\gamma_j(m)
\end{equation}
where $j(x,y)$ is the first $j \in \Z$ such that $x,y$ are in the same $j$-block.
Let $d_H(x,y) = L^{j(x,y)}$. Then,
for $d>2$ the sum is bounded above and below by $L$-dependent multiples of $d_H(x,y)^{-(d-2)}$ as $m\to 0$.
For $d=2$ the sum is log-divergent as $m\to 0$ but, as $d_H(x,y) \to 0$,
\begin{equation} \label{e:GreenH-log}
  (-\Delta_H+m^2)^{-1}(x,y)
  = \sum_{j(x,y) \leq j}  L^{-dj}\gamma_j(m) 
	= \log\frac{1}{m d_H(x,y)} + O(1).
\end{equation}

The covariance of the hierarchical Gaussian free field with mass $m>0$ would be $(-\Delta_H+m^2)^{-1}$.
The massless version of the Gaussian free field (or its hierarchical version) is not defined on $\R^2$,
only modulo constants. We will only need the regularized version of the hierarchical massless free field,
given for short-distance (or ultraviolet) regularization scale $-N$ (eventually tending to $-\infty$)
and long-distance (or infrared) regularization scale $M$ (eventually tending to $+\infty$) by
\begin{equation} \label{e:CNM}
  C_{-N,M} = \sum_{j=-N}^M \gamma_j Q_j, \qquad \gamma_j= \gamma_j(0) = L^{2j} \log L.
\end{equation}
General results or explicit construction in terms of independent standard Gaussian random variables imply that there is a unique Gaussian measure
$\nu^{\hGFF(-N,M)}$ with covariance $C_{-N,M}$ on $\mathcal{S}'(\bR^2)$ equipped with the cylinder-set $\sigma$-algebra. This is the regularized hierarchical GFF
whose expectation is denoted $\Eg_{C_{-N,M}}$  and  which we  use to define the sinh-Gordon model.

Let us remark that a random distribution $\varphi\sim \nu^{\hGFF(-N,M)}$ is almost surely equal to a piecewise function on the $-N$-blocks.
In other words, $\nu^{\hGFF(-N,M)}$ is the pushforward of a measure on $\bR^{(L^{-N}\bZ^2)}$. Throughout this paper we will freely make this identification without further comment.

\begin{remark}
The choice of $c_L$ in \eqref{eq:var-normalisation} is made precisely so that the centered Gaussian measure with covariance $C_{-N,0}$ has $\var \zeta(x) = (\log L)(N+1)$.
In particular, the expectation of $V_{-N-1}(\Lambda,\varphi)$, defined in \eqref{e:VN} below,
under the covariance $C_{-N,0}$ is independent of $N$
when $\Lambda$ is a scale-$0$ block. Indeed, it equals $\mu/4b^2$ for every $N$.
\end{remark}

\subsection{Hierarchical sinh-Gordon model}

Except that the Gaussian free field is replaced by its hierarchical version,
the hierarchical sinh-Gordon model is defined as in the Euclidean model \eqref{e:shG-cont1}--\eqref{e:shG-cont2}.
Namely, starting from ultraviolet regularization scale $\epsilon=L^{-N-1}$ and $\Lambda \subset \R^2$ bounded,
 the regularized sinh-Gordon model has interaction potential
given for $\varphi: \R^2 \to \R$ that is piecewise constant by
\begin{equation}
  V_{\epsilon}(\Lambda, \varphi) = \frac{\mu_{\epsilon}}{4b^2} \int_\Lambda  \cosh(2b\varphi(x)) \,dx, \qquad \mu_{\epsilon}= \mu \epsilon^{2b^2}.
\end{equation}
The unimportant choice of $-N-1$ rather than $-N$ as the ultraviolet scale will turn out convenient.
More explicitly, if $\varphi$ is constant on the blocks of $\cB_{-N}$,
\begin{align} \label{e:VN}
  V_{\epsilon}(\Lambda,\varphi)
  &= V_{-N-1}(\Lambda,\varphi)
  \nnb
  &= \frac{\mu_{-N-1}}{4b^2}  L^{-dN} \sum_{x\in (L^{-N}\Z)^2 \cap \Lambda} \cosh(2b\varphi(x)), \qquad \mu_{-N-1}= \mu L^{-2b^2(N+1)}.
\end{align}
Denote by $\Eg_C[\cdot]$ the expectation with respect to the centered Gaussian measure with covariance $C$,
and recall the covariance of the regularized hierarchical Gaussian free field from \eqref{e:CNM}.

\begin{definition} \label{defn:nu-NM}
For $b>0$ and $\mu>0$ and ultraviolet scale $-N>-\infty$ and infrared scale $M<+\infty$,
the regularized hierarchical sinh-Gordon measure $\nu^{\hShG(b,\mu|-N,M)}$ on $\cS'(\R^2)$ is defined by the expectation value
\begin{equation} \label{e:nu-NM}
\avg{F}_{\hShG(b,\mu|-N,M)} \propto \Eg_{C_{-N,M}} \qa{e^{-V_{-N-1}(\Lambda,\varphi)} F(\varphi)}
\end{equation}
for any bounded $F: \cS'(\R^2) \to \R$ that depends on the field $\varphi$ in a bounded union of $M$-blocks $\Lambda \subset \R^2$.
\end{definition}

The definition makes sense because, by the hierarchical structure of $C_{-N,M}$,
the measure on the right-hand side is, in fact, a product measure over the $M$-blocks of $\Lambda$.
Thus the measure can formally be extended to $\Lambda = \R^2$ as a product measure whose distribution agrees 
on any finite union of $M$-blocks with the measure on the right-hand side.
This product measure is clearly supported on distributions, and it is also not difficult to see that these are tempered. For the next theorem, we equip 
$\mathcal{S}'(\bR^2)$ with the strong dual topology. It is shown in \cite{biermé2017generalizedrandomfieldslevys} that the Borel $\sigma$-algebra for the weak and strong-dual topologies coincide and are equal to the cylinder-set $\sigma$-algebra.
We thus choose to define weak convergence of measures relative to the strong dual topology.

By hierarchical cylinder functions we mean functions $\varphi\mapsto F(\varphi(f_1),\dots,\varphi(f_n))$ where the functions $f_i$ are square-integrable and piecewise-constant on $j$-blocks for some $j$ and $F$ is $C^1$, bounded, and with bounded derivative.
By the same method that we use to prove existence of the Sinh-Gordon model, one may show that the map $f\mapsto \varphi(f)$ continuously extends to a process indexed by $L^2(\bR^2)$, so hierarchical cylinder functions are well-defined random variables. For a hierarchical cylinder function, we may define $\fdv{F}{\varphi(x)}=\sum_{i}f_i(x)\partial_i F(\varphi(f_1),\dots, \varphi(f_n))$. Thus $\grad F$ is a random variable with values in $L^2$. 

\begin{theorem} \label{thm:hshg}
  For any $b^2 \in (0,1)$ and $\mu>0$, 
  there is an effective coupling constant satisfying
  \begin{equation} \label{e:thm-mueff}
    0<\mu_{\rm eff}  = \mu_{\rm eff}(b,\mu) \approx \mu^{1/(1+b^2)},
  \end{equation}
  with implicit constant depending on $b \in (0,1)$, such that the following hold.
  
  \medskip
  \noindent
  (i) The probability measures $(\nu^{\hShG(b,\mu|-N,M)})$ on $\cS'(\R^2)$ converge weakly to a probability measure $\nu^{\hShG(b,\mu)}$ as first $N\to\infty$ (ultraviolet limit)
  and then $M\to\infty$ 
  (infrared limit)
  satisfying the following scale covariance:
  for $s\in L^\Z = \{\dots, L^{-1}, L^0, L^1, \dots\}$ with $R_s\varphi(x) = \varphi(x/s)$ and $R_sF(\varphi) = F(R_s\varphi)$,
  \begin{equation} \label{e:thm-scaling}
    \avg{F}_{\hShG(b,\mu)} = \avg{R_sF}_{\hShG(b,\mu s^{2+2b^2})}.
  \end{equation}

  \smallskip
  \noindent
  (ii) The associated maps $\phi_{-N,M}:\mathcal{S}(\bR^2)\to L^2(\mu_{\hShG(b,\mu|-N,M)})$
  and $\phi:\mathcal{S}(\bR^2)\to L^2(\mu_{\hShG(b,\mu)})$
  extend continuously to a bounded linear maps $\phi_{-N,M}:L^2(\bR^2)\to L^2(\mu_{\hShG(b,\mu|-N,M)})$, $\phi:L^2(\bR^2)\to L^2(\mu_{\hShG(b,\mu)})$.

  For all $f\in L^2(\bR^2)$, $\phi_{-N,M}(f)\to \phi(f)$ in distribution.

  \smallskip
  \noindent
  (iii)
  The measure $\nu^{{\rm hShG}(b,\mu)}$ has log-Sobolev constant of order $\mu_{\rm eff}$: for any hierarchical cylinder functional
  $F: \cS'(\R^2) \to [0,\infty)$,
  \begin{equation} \label{e:thm-LS}
    \ent_{\nu^{\hShG(b,\mu)}}(F) \lesssim \frac{2}{\mu_{\rm eff}} \E_{\nu^{\hShG(b,\mu)}}\qa{\|\nabla \sqrt{F}\|_{L^2}^2},
  \end{equation}
  where $\ent_{\nu}(F) = \E_{\nu}[F\log F]-\E_\nu[F]\log \E_\nu[F]$ denotes the relative entropy.

  \smallskip
  \noindent
  (iv)
  There is a uniform mass gap in the sense that
  for $x,y\in \R^2$ with $d_H(x,y) \geq 1$,
  \begin{equation} \label{e:thm-twopoint}
    \E_{\nu^{\hShG(b,\mu)}}\qb{\varphi(x)\varphi(y)} \lesssim (-\Delta_H+\mu_{\rm eff})^{-1}(x,y).
  \end{equation}
\end{theorem}

The above inequality is in the sense of distributions. Moreover the $L^2$-gradient in \eqref{e:thm-LS} requires a standard explanation.
For more elementary statements, the reader can also consult the proofs
which work uniformly with finite-dimensional regularizations.
The limiting versions are then consequences of stability under weak limits.

As a general consequence of the massive decay of the two-point function \eqref{e:thm-twopoint} and the FKG inequality
\cite[Section VIII.7]{MR0489552}, which both the hierarchical and Euclidean sinh-Gordon model satisfy,
see Appendix~\ref{app:hier},
it follows that there is also clustering of all truncated $n$-point correlation functions,
i.e., decay of cumulants.
For the Euclidean model, this would imply the mass gap in the Hamiltonian.
Note that massive correlation decay corresponds to an additional decay factor $\approx 1/(1+m^2d_H(x,y)^2)$
in the hierarchical model compared to the exponential factor $\approx e^{-m|x-y|}$ in the Euclidean model.

\subsection{Outline and interpretation}
\label{sec:interpretation}

The main ingredient of the proof is to show that the renormalized potential $V_j$,
defined and analyzed in Section~\ref{sec:renpot} below,
becomes uniformly convex with the estimate that one might expect from perturbation theory:
\begin{equation} \label{e:intro-Hebd}
  L^{2j} \He V_j \gtrsim L^{(2+2b^2)j} \mu,
\end{equation}
up to scales $j \leq j_0(b,\mu)$ such that the right-hand side is of order $1$. In fact, for $b^2 \in (0,1/2)$, we show
\begin{equation} \label{e:intro-HeL2}
  L^{2j} \He V_j \geq L^{2j} \He V_j(0) = L^{(2+2b^2)j} \mu - O(L^{(2+2b^2)j}\mu)^2,
\end{equation}
as well as $L^{2j} \He V_j \gtrsim 1$ for $j>j_0(b,\mu)$, and that these estimates are optimal.
For $b^2 \in [1/2,1)$, we expect the quadratic error to be false but prove that there is $p\in (1,2)$ depending on $b$ such that
\begin{equation} \label{e:intro-HeL1}
  L^{2j} \He V_j(0) = L^{(2+2b^2)j} \mu - O(L^{(2+2b^2)j}\mu)^p,
\end{equation}
and also $L^{2j} \He V_j \gtrsim 1$ for $j>j_0(b,\mu)$.

The convexification of the renormalized potential may at first sight not seem surprising, but
we think it is somewhat interesting.
From the renormalization group perspective, the mechanism is different from that in a $\varphi^4$ model, for example,
where convexification occurs at high temperature due to perturbation theory which is essentially a ``small field'' feature.
The convexification of the $\cosh$ interaction comes from fluctuations in the ``large field'' region. This is more
similar to what happens in the sine-Gordon model, but the unbounded $\cosh$ interaction requires a different approach
and the perturbative structure is different. For example, as we now outline, 
the analogue of the estimate \eqref{e:intro-HeL1} holds in the sine-Gordon model with quadratic error term throughout the subcritical phase.

To explain this, recall that the sine-Gordon model is defined in essentially the same way as the sinh-Gordon model, with interaction
\begin{equation}
  V_{\epsilon}(\varphi) = -\frac{z_\epsilon}{4\beta^2} \int \cos(2\beta \varphi_\epsilon) \, dx, \qquad z_\epsilon = \epsilon^{-2\beta^2} z,
\end{equation}
so that the sinh-Gordon model is formally obtained by taking $z=-\mu$ and $\beta = ib$.
Our normalization is such that the KT transition takes place at $\beta^2=1$ and the free fermion point is $\beta^2=1/2$ (for the Euclidean model).
The renormalization group flow of the renormalized coupling constant $z_j$ satisfies
\begin{equation}
  z_{j+1}= L^{2-2\beta^2} z_j + O(z_j^2),
\end{equation}
and one has $L^{2j} \He V_j(0) = z_j + O(z_j^2)$. In particular, for all $\beta^2 \in (0,1)$, one has
\begin{equation} \label{e:intro-SGHe}
  L^{2j} \He V_j(0) = L^{(2-2\beta^2) j} z + O(L^{(2-2\beta^2) j} z)^2.
\end{equation}
For the hierarchical version of the model,
this follows for example easily from the presentation in \cite[Section~4]{MR4061408},
 which is essentially an adaptation of the method of \cite{MR859369}, see also \cite{MR2523458}.
This estimate should be compared with \eqref{e:intro-HeL2} for $b^2 \in (0,1/2)$ and \eqref{e:intro-HeL1} for $b^2 \in [1/2,1)$.
The quadratic error bound \eqref{e:intro-SGHe} for all $\beta^2 \in (0,1)$ rather than only in the regime $\beta^2 \in (0,1/2)$ is different from what we expect
is true in the sinh-Gordon model.

For the sine-Gordon model the definition of the running coupling constant $z_j$ in a rigorous renormalization group framework
is well understood, in terms of a Fourier decomposition of the periodic potential (equivalent to a charge decomposition of the equivalent Coulomb gas),
again see \cite{MR4061408,MR2523458} for the hierarchical setting.
On the other hand, the meaning of the running coupling constant $\mu_j$ of the sinh-Gordon model is not obvious,
and we use the Hessian of the renormalized potential as a proxy that is defined non-perturbatively.

It might initially seem surprising that the renormalized potential
of the sine-Gordon model seems better behaved than that of the sinh-Gordon model
when the ``$L^2$ threshold'' of the GMC is crossed ($\beta^2 \geq 1/2$ respectively $b^2 \geq 1/2$).
Indeed, it is well known that the sine-Gordon model has an infinite energy renormalization for $\beta^2 \geq 1/2$
that results in
a limiting measure that is not locally absolutely continuous with respect to the hierarchical GFF when $\beta^2 \in [1/2,1)$, see for example \cite{2502.02554},
whereas the sinh-Gordon measure is locally absolutely continuous for all $b^2 \in (0,1)$.
Why is perturbation theory then  better behaved for the singular sine-Gordon model than for the ultraviolet finite sinh-Gordon model?
The infinite energy renormalization is not in contradiction with
the fact that the finite-volume sine-Gordon correlation functions are, in fact, analytic in $z$ (in a strip containing the real axis).
This is because the infinite energy renormalization for $\beta^2\in [1/2,1)$ is not visible in the correlation functions or the Hessian of the renormalized potential.
On the other hand, the failure of perturbation theory for the sinh-Gordon model is not in contradiction to absence of energy renormalization (and absolute continuity),
and essentially a consequence of the fact that the Gaussian multiplicative chaos $\wick{e^{\pm 2b\varphi}}$ has finitely many moments under the free field measure.
We include a more detailed discussion of this point in Appendix~\ref{app:sinsinhpert}.

Finally, in a different direction,
we mention that the effective potential of the sine-Gordon model continues to have the noncompact $\Z$-symmetry, $\varphi \mapsto \varphi+n\pi/\beta$, $n\in \Z$.
It is expected that the massless measure exists in infinite volume in the sense of spontaneous symmetry breaking (with massive correlation decay),
but the log-Sobolev inequality is \emph{not} expected to hold (as more generally in situations of spontaneously broken symmetry).

\subsection{Notation}

The notations $\lesssim$ and $\approx$ denote inequalities up to constants, where
implicit constants can depend on $b$ and $L$ but no other parameters.

\section{Convexity of the renormalized potential}
\label{sec:renpot}

In this section, we prove our main technical estimates, which are strong convexity estimates for the renormalized
potential of the hierarchical sinh-Gordon model.

Using the conventions of Section~\ref{sec:hierGFF}, we consider the decomposition of the hierarchical Gaussian free field
truncated at small distance (ultraviolet) scale $-N$ and large distance (infrared) scale $j$:
\begin{equation} \label{e:C-decomp}
  C_j = C_{-N,j} = \sum_{k=-N}^j \gamma_k Q_k, \qquad \text{with $\gamma_k = L^{2k} \log L$},
\end{equation}
for $j\geq -N$, and $C_{-N-1}=0$.
This covariance is equivalent to that of a branching random walk on the leaves at height $N+j+1$ with standard Gaussian increments of variance $\log L$.
Denote by $\cB_j$ the set of $j$-blocks $L^j(x+[0,1)^2)$, $x\in \Z^2$,
and by $\cB_j(\Lambda)$ the set of $j$-blocks contained in $\Lambda$.
The renormalized potential is defined by
\begin{equation} \label{e:Vj}
  V_j(\varphi) = V_j(\Lambda,\varphi) = -\log \Eg_{C_j}[e^{-V_{-N-1}(\Lambda,\varphi+\zeta)}] = \sum_{B\in \cB_j(\Lambda)} V_j(B,\varphi|_B),
\end{equation}
where $V_{-N-1}(\Lambda,\varphi)$ is defined in \eqref{e:VN}
and $\varphi: \bR^2 \to \R$ is a locally bounded external field.

The same definition would apply for the Euclidean model, with the hierarchical covariance decomposition replaced by, for example, a heat kernel decomposition of the usual Euclidean free field.
The simplifications of the hierarchical model are that we only need
to consider fields $\varphi$ constant on $j$-blocks $B$ and that the renormalized potential $V_j$ is additive over $j$-blocks
as indicated in \eqref{e:Vj},
where each $V_j(B,\varphi)$ can be restricted to a function of a single block variable $\varphi$ constant on $B$.
%
Thus while $V_j$ is the global renormalized potential depending on $\varphi: \R^2 \to \R$, all information is contained in the local
renormalized potentials $V_j(B,\cdot)$.

\begin{definition}
  The space $X_j \subset L^2(\R^2)$ consists of $\varphi: \R^2 \to \R$
  which are constant on $j$-blocks, i.e., $\varphi$ is constant on the block $L^j(x+[0,1)^2)$ for each $x \in \Z^2$.
\end{definition}

\begin{proposition} \label{prop:GHS}
  $\He V_j(\varphi) \geq \He V_j(0)$ for any $\varphi \in X_j$. 
\end{proposition}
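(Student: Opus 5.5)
Since $V_j$ is additive over $j$-blocks and, on $X_j$, each $V_j(B,\cdot)$ depends on a single real variable $\varphi_B$, the Hessian of $V_j$ restricted to $X_j$ is diagonal in the block variables. The statement therefore reduces to a one-dimensional inequality for each block: writing $v(t)=V_j(B,t)$ for $t\in\R$, I will show $v''(t)\ge v''(0)$ for all $t$. Concretely,
\begin{equation*}
  v(t) = -\log \Eg_{C_j}\qa{e^{-U(t+\zeta)}}, \qquad U(\psi)=\frac{\mu_{-N-1}}{4b^2}L^{-2N}\sum_{x\in B\cap (L^{-N}\Z)^2}\cosh(2b\psi(x)),
\end{equation*}
where $\zeta$ is the finite-dimensional Gaussian field with covariance $C_j$ restricted to $B$. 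Let $\avg{\cdot}_t$ denote the probability measure on $\R^{B\cap(L^{-N}\Z)^2}$ proportional to $e^{-U(t+\zeta)}$ times the Gaussian law. Differentiating twice gives
\begin{equation*}
  v''(t) = \avg{\textstyle\sum_x \partial_x^2 U}_t - \var_t\pa{\textstyle\sum_x \partial_x U} .
\end{equation*}
This is a GHS-type expression. The proposition's label suggests that the intended mechanism is a Griffiths/GHS-type correlation inequality, and I will follow that route.

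The key step is to show that $t\mapsto v''(t)$ is minimized at $t=0$. By the symmetry $\zeta\mapsto-\zeta$ and the evenness of $\cosh$, $v$ is even, so $v'''(0)=0$ and it suffices to prove $v'''(t)\ge 0$ for $t\ge 0$. Equivalently, writing $m(t)=v'(t)=\avg{\sum_x \partial_xU}_t$, I need $m$ to be convex on $[0,\infty)$. Shifting variables, $\avg{\cdot}_t$ is the Gaussian measure centered at $t$, with covariance $C_j$, perturbed by the even single-site potentials $\frac{\mu'}{4b^2}\cosh(2b\psi_x)$, which is a ferromagnetic system. Since $C_j$ is a positive combination of block averages $Q_k$, its inverse has nonpositive off-diagonal entries. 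The Gaussian part is therefore a ferromagnetic pair interaction, and the centering corresponds to a uniform external field $h=C_j^{-1}t\ge0$ (componentwise). In these variables, $m(t)$ equals, up to the constant $2b$, $\avg{\sum_x \sinh(2b\psi_x)}$.

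Using $\partial_t=\sum_y h'_y\partial_{h_y}$ with $h'\ge 0$, the sign of $v'''$ reduces to signs of third truncated correlations $\avg{\sinh(2b\psi_x);\psi_y;\psi_z}$. The main obstacle is that the standard GHS inequality concerns $\avg{\psi_x;\psi_y;\psi_z}\le 0$ for single-site measures of the form $e^{-P(\psi)}$ with $P$ even and $P'$ convex on $[0,\infty)$, the Ellis--Monroe--Newman class. That class does include $\cosh$ plus a quadratic term. Here, however, the observable is $\sinh(2b\psi_x)$, which is odd and convex on $[0,\infty)$, and the required sign is the opposite one for $v$: I need $m''\ge0$, whereas GHS gives concavity of magnetization.

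To reconcile these, I would instead use that $-v$ is a log-partition function. Since $v''=\avg{U''}-\var(U')$, I would rewrite $v''(t)$ through the Gaussian integration by parts identity, expressing it as $t$-independent terms minus the variance of $\psi$-linear terms. Then $v''$ is an affine function of $\var_t(\sum_x \psi_x)$, with a negative coefficient coming from $C_j^{-1}$. GHS states that $\var_t(\sum_x\psi_x)$, the susceptibility, is nonincreasing in $h\ge0$, hence maximal at $t=0$. This gives $v''(t)\ge v''(0)$. I expect the delicate points to be verifying the Ellis--Monroe--Newman hypothesis for the $\cosh$ single-site measure and carrying out the exact algebra that converts $v''$ into the susceptibility identity.
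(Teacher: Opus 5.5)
Your final argument is correct, but it is organized differently from the paper's. You apply the multi-site GHS inequality of Ellis--Monroe--Newman once, to the whole ferromagnet on the $(-N)$-blocks of $B$. The Girsanov shift gives, up to a constant, $v(t)=\frac{t^2}{2}(1,C_j^{-1}1)-\log Z(tC_j^{-1}1)$. Hence $v''(t)=(1,C_j^{-1}1)-\var_t((C_j^{-1}1,\psi))$, with $C_j^{-1}1=\gamma_{-N,j}^{-1}1$ on $B$. GHS, applied with single-site potentials $\frac12 a_x\psi_x^2+c\cosh(2b\psi_x)$ (where $a_x$ is the diagonal of $C_j^{-1}$), then makes the susceptibility nonincreasing for $t\ge 0$.

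The paper instead inducts over scales. Because $C_j-C_{j-1}=\gamma_jQ_j$ is supported on $X_j$, each step is a one-dimensional log-Gaussian convolution of $L^2V_{j-1}(b,\cdot)$. The same identity, together with the single-spin GHS inequality, shows that the property ``even with $V'''\ge 0$ on $[0,\infty)$'' passes from $V_{j-1}$ to $V_j$.

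What each route buys:
\begin{itemize}
\item The paper's route needs only one-variable GHS and no information about the fine-lattice matrix $C_j^{-1}$. It also makes explicit that every renormalized block potential stays in the EMN class.
\item Your route avoids the induction and yields the same conclusion, $v'''\ge 0$ on $[0,\infty)$. The price is invoking the full multi-site theorem and the ferromagnetism of $C_j^{-1}$.
\end{itemize}

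On that last point your justification is too quick. A positive combination of block-averaging projections need not have an inverse with nonpositive off-diagonal entries; this already fails for two non-nested partitions of three points. Here the partitions are nested, and the required sign structure, together with $C_{-N,j}^{-1}1=\gamma_{-N,j}^{-1}1$, is exactly what Lemma~\ref{lem:hiergen} gives when applied to $C_{-N,j}$ on $X_{-N}$. You should cite or reproduce that computation.

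The detour through $\langle\sinh(2b\psi_x);\psi_y;\psi_z\rangle$ is unnecessary and can simply be dropped.
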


\begin{proof}
  Since $V_j$ is additive over $j$-blocks and $\varphi$ is constant on $j$-blocks, we can regard $\varphi$ as constant
  and restrict to a single block that is omitted from the notation.
  Thus with minor abuse of notation, each $V_{j}$ is a function of a single variable.
  The initial potential $V_{-N-1}(\varphi) =\frac{\mu_{-N-1}}{4b^2}\cosh(2b\varphi)$ is symmetric and $V_{-N-1}'''(\varphi) >0$ for $\varphi>0$.
  This property is clearly preserved by sums (and multiplication with positive constants).
  It is also preserved by log-Gaussian convolutions, as a special case of the GHS inequality \cite[Theorem~1.2.c]{MR395659}.
  In more detail, given a scale-$j$ block $B$ and $\varphi \in X_j$, the previous scale
  renormalized potential on $B$ splits as
  $V_{j-1}(B,\varphi) = \sum_{b \in \cB_{j-1}(B)} V_{j-1}(b,\varphi) = L^{2} V_{j-1}(b,\varphi)$
  for any $b \in \cB_{j-1}(B)$.
  Since the covariance $C_j-C_{j-1} =\gamma_j Q_j$ is supported on $X_j$ and using Girsanov's formula,
  \begin{align}
    V_j(B,\varphi)
    &= -\log \Eg_{C_{j}-C_{j-1}}\q{e^{-V_{j-1}(B,\varphi+\zeta)}}
    \nnb
    &= \frac{(\varphi,\varphi)_B}{2\gamma_j} -\log \Eg_{C_{j}-C_{j-1}}\qa{e^{-L^{2}V_{j-1}(\zeta,b)} e^{\frac{1}{\gamma_j}(\zeta,\varphi)_B}},
  \end{align}
  where $(\cdot,\cdot)_B$ is the $L^2$ inner product restricted to $B$.
  Since $\varphi$ is constant on $B$,
  we have $(\zeta,\varphi)_B = \varphi (\zeta,1)$.
  Thus, the $\varphi$-derivative of the second term in the last display is proportional to minus the expectation of
  the magnetisation $(\zeta,1)$ under the measure associated with the expectation on the right-hand side
  and the GHS inequality implies
  $V_j'''(\varphi) \geq 0$ for $\varphi>0$ if $V_{j-1}'''(\varphi,b) \geq 0$ for $\varphi>0$.
\end{proof}
In the remainder of this section we prove the following propositions:
\begin{proposition} \label{prop:H0}
Let $b^2\in (0,1/2)$. Then for $j \in \Z$ such that $L^{(2+2b^2)j}\mu \ll 1$ independent of $N$, 
\begin{equation} \label{e:H0}
  L^{2j} \He V_j(0) \geq L^{(2+2b^2)j} \mu  - O(L^{(2+2b^2)j}\mu)^2.
\end{equation}
\end{proposition}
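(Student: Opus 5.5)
We work on a single $j$-block $B$, as in the proof of Proposition~\ref{prop:GHS}, and regard $V_j(B,\cdot)$ as a function of the constant value $\varphi$ of the field on $B$. With the $L^2$ normalisation of $\He$ on $X_j$, the left-hand side of \eqref{e:H0} is then the ordinary second derivative $\partial_\varphi^2 V_j(B,\varphi)|_{\varphi=0}$: the factor $L^{2j}$ cancels the volume $|B|$. Write $U(\zeta)=V_{-N-1}(B,\zeta)$ and let $\nu$ be the probability measure proportional to $e^{-U(\zeta)}\,d\Eg_{C_j}$. Differentiating \eqref{e:Vj} twice at $\varphi=0$ gives
\begin{equation*}
  \partial_\varphi^2 V_j(B,0) = \E_\nu[M_c] - \var_\nu\Big(\frac{1}{2b}M_s\Big),
\end{equation*}
where the two chaos masses are
\begin{equation*}
  M_c=\mu_{-N-1}\int_B\cosh(2b\zeta)\,dx,\qquad M_s=\mu_{-N-1}\int_B\sinh(2b\zeta)\,dx .
\end{equation*}
Note that $U=M_c/(4b^2)$. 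Set $g=\mu L^{(2+2b^2)j}$. Since $\var\zeta(x)=(\log L)(N+j+1)$ under $C_j$, we have $\Eg_{C_j}[\cosh(2b\zeta(x))]=L^{2b^2(N+j+1)}$, and therefore $\Eg_{C_j}[M_c]=g$ exactly, independently of $N$.

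The plan is to bound the two terms separately.

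\emph{First term.} Using $Z=\Eg[e^{-U}]\le 1$ and $e^{-U}\ge 1-U$,
\begin{equation*}
  \E_\nu[M_c]\ \ge\ \Eg[M_c e^{-U}]\ \ge\ \Eg[M_c]-\Eg[M_cU]\ =\ g-\tfrac{1}{4b^2}\Eg[M_c^2].
\end{equation*}

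\emph{Second term.} We bound
\begin{equation*}
  \var_\nu\Big(\frac{M_s}{2b}\Big)\ \le\ \E_\nu\Big[\frac{M_s^2}{4b^2}\Big]\ \le\ \frac{\Eg[M_s^2]}{4b^2 Z}.
\end{equation*}
By Jensen's inequality $Z\ge e^{-\Eg[U]}=e^{-g/4b^2}\ge 1/2$, which holds because $g\ll 1$.

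Together these reduce \eqref{e:H0} to the uniform-in-$N$ second moment bound
\begin{equation*}
  \Eg_{C_j}[M_c^2]+\Eg_{C_j}[M_s^2]=O(g^2).
\end{equation*}
The main work is this step, and it is exactly where $b^2<1/2$ enters.

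\emph{Second moment bound.} Since $|\sinh|\le\cosh$ and $\cosh^2\le \tfrac12(e^{4b\zeta}+e^{-4b\zeta})+1$ pointwise, it suffices to control the two-point integral
\begin{equation*}
  \mu_{-N-1}^2\int_B\int_B \Eg\big[e^{2b\zeta(x)}e^{\pm 2b\zeta(y)}\big]\,dx\,dy .
\end{equation*}
We use the Gaussian identity
\begin{equation*}
  \Eg\big[e^{2b\zeta(x)}e^{2b\zeta(y)}\big]=L^{4b^2(N+j+1)}\,e^{4b^2C_j(x,y)} .
\end{equation*}
The $-$ sign case is only smaller. In the hierarchical model $C_j(x,y)=(\log L)(j-k+1)$ when $x,y$ first share a block at scale $k\in\{-N,\dots,j\}$, and the diagonal $-N$-blocks are handled in the same way with $k=-N$. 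The volume of pairs $(x,y)\in B\times B$ at scale exactly $k$ is at most $L^{2j}L^{2k}$.

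Multiplying by $\mu_{-N-1}^2=\mu^2L^{-4b^2(N+1)}$, the second moment is bounded by
\begin{equation*}
  \mu^2L^{4b^2 j}\sum_{k=-N}^{j}L^{2j+2k}L^{4b^2(j-k+1)} \lesssim g^2\sum_{k\le j}L^{(2-4b^2)(k-j)} .
\end{equation*}
This is a geometric series that converges uniformly in $N$ precisely when $4b^2<2$. It gives $O(g^2)$ and completes the argument.

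The condition $b^2<1/2$ is the $L^2$ threshold of the Gaussian multiplicative chaos. For $b^2\ge1/2$ this step fails, consistent with the weaker error term \eqref{e:intro-HeL1} expected in that regime.
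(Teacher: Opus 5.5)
Your proof is correct, and it takes a genuinely different and more elementary route than the paper.

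The paper never differentiates the one-variable function $V_j(B,\cdot)$ directly. It starts from the Brascamp--Lieb operator bound of Lemma~\ref{lem:BL}, applies $X/(1+X)\geq X-X^2$ and projects with $Q_j$. It then bounds the first-order term $\PP_{-N-1,j}[H_{xx}](0)$ from below using Girsanov (shifting the field by $\pm 2bC_j(x,\cdot)$) together with the Jensen bound \eqref{e:VB-lbub}. The second-order term $\PP_{-N-1,j}[Q_jH_{-N-1}C_jH_{-N-1}Q_j]$ is bounded from above by Girsanov and Lemma~\ref{lem:C-bd}.

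You instead use the exact identity $\partial_\varphi^2V_j(B,0)=\E_\nu[M_c]-\var_\nu(M_s/2b)$ and bound the variance by the second moment. This is in fact an equality here, since $\E_\nu[M_s]=0$ by the symmetry $\zeta\mapsto-\zeta$. You compare $\nu$ with the Gaussian only through $Z\le 1$, $Z\geq e^{-g/4b^2}$ and $e^{-U}\geq 1-U$. Everything then reduces to $\Eg_{C_j}[M_c^2]=O(g^2)$, which rests on the same block-integrability of $e^{4b^2C_j(x,y)}$ as Lemma~\ref{lem:C-bd}. Your second-order quantity is even slightly smaller than the paper's, which carries an extra weight $C_j(x,y)$ coming from $\hat H^2$.

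The paper's route buys structure. Lemma~\ref{lem:BL} is an operator inequality on all of $X_{-N}$ and for arbitrary external field. The same template gives the scale-by-scale bound \eqref{e:BL-large} behind Corollary~\ref{cor:Hej0}, and the paper reuses it in the $L^1$ regime.

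Your closing remark is right that the Gaussian second-moment step breaks for $b^2\geq 1/2$, but your framework does not, provided you keep the damping factor. Since $|M_s|\le M_c=4b^2U$, you get $\E_\nu[M_s^2]\le Z^{-1}\Eg_{C_j}[M_c^2e^{-U}]\lesssim \Eg_{C_j}[M_c^p]$ for $p\le 2$. As in Lemma~\ref{lem:H0-L1-first}, $\Eg_{C_j}[M_ce^{-U}]\geq g-C_p\Eg_{C_j}[M_c^p]$. Lemma~\ref{lem:GMC-Lp} would then give Proposition~\ref{prop:H0-L1} for any $p\in(1,\min\{2,p_*(b)\})$, without the interpolation and H\"older steps.
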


\begin{proposition} \label{prop:H0-L1}
Let $b^2\in [1/2,1)$. Then there is $p\in (1,2)$ depending on $b^2$ such that for all $j \in \Z$ such that $L^{(2+2b^2)j}\mu \ll 1$ independent of $N$, 
\begin{equation} \label{e:H0-L1}
  L^{2j} \He V_j(0) \geq L^{(2+2b^2)j} \mu  - O(L^{(2+2b^2)j}\mu)^p.
\end{equation}
\end{proposition}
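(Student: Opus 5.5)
The plan is to reduce the statement to a moment bound for a hierarchical Gaussian multiplicative chaos (a Mandelbrot cascade). As in the proof of Proposition~\ref{prop:GHS}, I restrict to a single $j$-block $B$ and regard $V_j(B,\cdot)$ as a function of one real variable $\varphi$. Because $\He V_j$ acts on fields constant on blocks of area $L^{2j}$, one has $L^{2j}\He V_j(0)=V_j''(0)$. Define
\begin{equation*}
  Y=\mu_{-N-1}\int_B \cosh(2b\zeta)\,dx, \qquad S=\frac{\mu_{-N-1}}{2b}\int_B \sinh(2b\zeta)\,dx ,
\end{equation*}
with $\zeta$ distributed with covariance $C_j$. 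Then $V_{-N-1}(B,\zeta)=Y/(4b^2)$. Differentiating \eqref{e:Vj} twice gives
\begin{equation*}
  V_j''(0)=\langle Y\rangle-\big(\langle S^2\rangle-\langle S\rangle^2\big),
\end{equation*}
where $\langle\cdot\rangle$ is expectation under $\Eg_{C_j}$ tilted by $e^{-Y/4b^2}$, normalized by $Z=\Eg_{C_j}[e^{-Y/4b^2}]$. By the normalization \eqref{eq:var-normalisation}, $\var\zeta(x)=(\log L)(N+j+1)$. Hence $\Eg_{C_j}[Y]=\mu L^{(2+2b^2)j}=:\lambda\ll1$, which is exactly the main term in \eqref{e:H0-L1}.

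\textbf{Step 1: elementary reduction.} Fix $p\in(1,\min(2,1/b^2))$ and set $c=1/4b^2$. For $y\ge0$ we have
\begin{equation*}
  y-ye^{-cy}\le\min(cy^2,y)\lesssim y^p, \qquad y^2e^{-cy}\lesssim y^p .
\end{equation*}
Since $Z\le1$, this gives $\langle Y\rangle\ge\Eg[Ye^{-cY}]\ge\lambda-C\,\Eg[Y^p]$. Using $S^2\le Y^2/(4b^2)$ and Jensen's bound $Z\ge e^{-c\lambda}\ge1/2$, we also get
\begin{equation*}
  \langle S^2\rangle-\langle S\rangle^2\le\langle S^2\rangle\le 2\,\Eg[S^2e^{-cY}]\lesssim \Eg[Y^p].
\end{equation*}
Hence $V_j''(0)\ge\lambda-C\,\Eg_{C_j}[Y^p]$. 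It then suffices to prove $\Eg_{C_j}[Y^p]\lesssim\lambda^p$ uniformly in $N$.

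\textbf{Step 2: scaling to a cascade.} Write $\cosh=\frac12(e^{2b\zeta}+e^{-2b\zeta})$ and use $(a+b)^p\le2^{p-1}(a^p+b^p)$. By the symmetry $\zeta\mapsto-\zeta$ it is enough to treat $M=\mu_{-N-1}\int_Be^{2b\zeta}\,dx$. By the exact hierarchical structure of $C_j$, $M/\lambda$ has the law of the normalized cascade $W_n$ with $n=N+j+1$ generations. This cascade satisfies
\begin{equation*}
  W_{n+1}\overset{d}{=}\sum_{i=1}^{L^2}A_iW_n^{(i)}, \qquad A_i=L^{-2}e^{2b\xi-2b^2\log L},
\end{equation*}
where $\xi\sim N(0,\log L)$ is shared by all $i$ and the $W_n^{(i)}$ are i.i.d.\ copies independent of $\xi$. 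Equivalently one can place the Gaussian at the leaves; the direction of the recursion does not matter.

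\textbf{Step 3: uniform $p$-th moment (the main obstacle).} I need $\sup_n\Eg[W_n^p]<\infty$ for the chosen $p\in(1,1/b^2)$. Here the $L^2$ argument available for $b^2<1/2$ is unavailable, so I would use the Kahane--Peyri\`ere/Biggins argument. Since $(W_n)$ is a positive mean-one martingale, the von Bahr--Esseen inequality for $p\in(1,2)$ combined with the recursion gives
\begin{equation*}
  \Eg[W_{n+1}^p]\le \Eg\Big[\sum_iA_i^p\Big]\Eg[W_n^p]+C_p .
\end{equation*}
A direct Gaussian computation gives
\begin{equation*}
  \Eg\Big[\sum_iA_i^p\Big]=L^{2-2p+2b^2p(p-1)}=L^{(p-1)(2b^2p-2)}<1
\end{equation*}
exactly when $p<1/b^2$. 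Iterating then yields $\sup_n\Eg[W_n^p]<\infty$.

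The delicate point is handling the dependence of the weights, which share the parent Gaussian. I would do this by conditioning on $\xi$ and applying von Bahr--Esseen to the conditionally independent centered summands $A_i(W_n^{(i)}-1)$. This gives $\Eg[M^p]\lesssim\lambda^p$, and therefore \eqref{e:H0-L1} with this $p\in(1,2)$, uniformly in $N$.
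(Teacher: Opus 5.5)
Your proof is correct, and it is more direct than the paper's. Both arguments share the first-order estimate $\langle Y\rangle\ge\Eg_{C_j}[Ye^{-cY}]\ge\lambda-C\,\Eg_{C_j}[Y^p]$, which is Lemma~\ref{lem:H0-L1-first}. Both also rest on uniform GMC moments below $p_*(b)$. Since $\xi(p)=2$ factors as $(p-1)(b^2p-1)=0$, the paper's $p_*(b)$ is exactly your threshold $1/b^2$.

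The difference is in the variance term. The paper chains together the Brascamp--Lieb bound of Lemma~\ref{lem:BL}, the operator inequality $X/(1+X)\ge X-C_pX^p$, the spectral interpolation $(f,\hat H^pf)\le(f,\hat Hf)^{2-p}(f,\hat H^2f)^{p-1}$ and H\"older. This produces the multiscale quantity $(1_j,H_{-N-1}C_jH_{-N-1}1_j)$, which forces GMC moments on all sub-blocks of $B$ and the scale summation in Lemma~\ref{lem:H0-L1-second}.

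You instead use four simple facts:
\begin{itemize}
\item the exact scalar identity $V_j''(0)=\langle Y\rangle-\mathrm{Var}(S)$;
\item the crude bound $\mathrm{Var}(S)\le\langle S^2\rangle$;
\item the pointwise inequality $|S|\le Y/(2b)$;
\item the observation that the Gibbs weight itself tames the second moment, $Y^2e^{-cY}\lesssim Y^p$.
\end{itemize}
This needs only the $p$-th moment of the chaos mass of the single block $B$. It also gives every $p\in(1,1/b^2)$, whereas the H\"older constraints \eqref{e:pq-cond} leave a window of admissible $p$ that narrows as $b^2\uparrow1$.

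The paper's route buys a quadratic-form framework that applies in general test directions and is presented uniformly with Proposition~\ref{prop:H0}. For the statement as formulated (direction $1_j$, $\varphi=0$), your proof is simpler. Taking $p=2$, it also recovers Proposition~\ref{prop:H0} for $b^2<1/2$.

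One minor correction in Step~3 concerns the tool, not the conclusion. Applied conditionally on $\xi$ to $\sum_iA_i(W_n^{(i)}-1)$, von Bahr--Esseen carries the constant $2$. Combined with Minkowski, it only yields $\|W_{n+1}\|_p\le C+(2\rho)^{1/p}(\|W_n\|_p+1)$ with $\rho=\Eg\sum_iA_i^p$. This contracts only if $2\rho<1$. Since $\rho\ge L^{-(1-b^2)^2/(2b^2)}$ for every $p$, that condition fails for $b^2$ near $1$ at fixed $L$.

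Your displayed recursion $\Eg[W_{n+1}^p]\le\rho\,\Eg[W_n^p]+C_p$ is nonetheless true, by either of two routes:
\begin{itemize}
\item Conditionally on $\xi$, apply the Kahane--Peyri\`ere inequality $(\sum_ix_i)^p\le\sum_ix_i^p+\sum_{i\ne j}x_ix_j^{p-1}$ for $x_i\ge0$ and $p\in(1,2]$. Together with $\Eg W_n=1$ and $\Eg W_n^{p-1}\le1$, this gives the recursion with $C_p=\Eg\sum_{i\ne j}A_iA_j^{p-1}$.
\item Run your von Bahr--Esseen argument over $k$ generations at once, where the weights satisfy $\Eg\sum_{|v|=k}A_v^p=\rho^k$, so $2\rho^k<1$ for $k$ large.
\end{itemize}
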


The effective scale $j_0(b,\mu)$ is defined such that the right-hand side is of order one. It thus satisfies $L^{(2+2b^2)j_0(b,\mu)}\mu \approx 1$
and $L^{2j_0(b,\mu)} \approx \mu^{-1/(1+b^2)}$.

\begin{corollary} \label{cor:Hej0}
  Let $b^2 \in (0,1)$. Then $L^{2j} \He V_j(0) \gtrsim 1$ for all $j \geq j_0(b,\mu)$.
\end{corollary}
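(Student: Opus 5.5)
We reduce Corollary~\ref{cor:Hej0} to a one-step monotonicity statement for the rescaled Hessian $h_j := L^{2j}\He V_j(0)$, seeded by Propositions~\ref{prop:H0} and~\ref{prop:H0-L1}. As in the proof of Proposition~\ref{prop:GHS}, we restrict to a single $j$-block and regard $V_j$ as a function of one real variable. The renormalization step is then
\begin{equation*}
  V_{j+1}(\varphi) = -\log \Eg_{\gamma_{j+1}}\big[e^{-L^2 V_j(\varphi+\zeta)}\big],
\end{equation*}
with $\zeta$ a scalar Gaussian of variance $\gamma_{j+1}$. Differentiating twice in $\varphi$ gives the identity
\begin{equation*}
  V_{j+1}''(0) = \langle L^2 V_j''(\zeta)\rangle - \var\big(L^2 V_j'(\zeta)\big),
\end{equation*}
where $\langle\cdot\rangle$ and $\var$ refer to the tilted measure $\propto e^{-L^2V_j(\zeta)}\,\Eg_{\gamma_{j+1}}(d\zeta)$.

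We control this expression with the Brascamp--Lieb inequality, which applies because $V_j$ is convex, being a log-Gaussian convolution of convex functions. The tilted measure has density $\propto e^{-W(\zeta)}$ with $W = \zeta^2/(2\gamma_{j+1}) + L^2V_j$. Brascamp--Lieb gives
\begin{equation*}
  \var(L^2V_j') \le \big\langle (L^2V_j'')^2/(\gamma_{j+1}^{-1}+L^2V_j'')\big\rangle.
\end{equation*}
Hence
\begin{equation*}
  V_{j+1}''(0) \ge \Big\langle \frac{L^2V_j''}{1+\gamma_{j+1}L^2V_j''}\Big\rangle .
\end{equation*}
This is the standard statement that the renormalized Hessian dominates the ``harmonic sum'' of the old Hessian and $\gamma_{j+1}^{-1}$. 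By Proposition~\ref{prop:GHS}, $V_j''(\zeta)\ge V_j''(0)$, and $x\mapsto x/(1+\gamma x)$ is increasing. Therefore
\begin{equation*}
  V_{j+1}''(0) \ge \frac{L^2 V_j''(0)}{1+\gamma_{j+1}L^2V_j''(0)} .
\end{equation*}
In terms of $h_j$, using $\gamma_{j+1}=L^{2j+2}\log L$, this reads
\begin{equation*}
  h_{j+1} \ge \frac{L^{4}h_j}{1+L^{2}(\log L)\, h_j}.
\end{equation*}

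We then iterate. Fix $j_0=j_0(b,\mu)$ as the largest scale at which Propositions~\ref{prop:H0} and~\ref{prop:H0-L1} still apply, with $L^{(2+2b^2)j_0}\mu$ a small fixed constant $c$. At that scale those propositions give $h_{j_0}\ge c/2>0$. The map $f(h)=L^4h/(1+L^2(\log L)h)$ is increasing in $h$. It expands small values (since $L^4>1$) and has the positive fixed point $h_*=(L^2-1)/\log L$, so $f(h)\ge \min(h,h_*)$ for all $h>0$, by monotonicity and since $f(h)\ge h$ for $h\le h_*$. By induction, $h_j\ge \min(c/2,h_*)\gtrsim 1$ for all $j\ge j_0$, with constants depending only on $b$ and $L$. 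This matches the definition of $j_0$ up to an $O(1)$ shift of scale, which changes constants only.

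The main point to check is that the Brascamp--Lieb step is legitimate, which requires $W$ to be convex. This is guaranteed inductively since $V_{-N-1}$ is convex and log-Gaussian convolution preserves convexity (Prékopa--Leindler). Uniformity in $N$ and $\Lambda$ follows because everything reduces to the single-block scalar recursion above. The remaining content is purely the seed $h_{j_0}\gtrsim 1$, which Propositions~\ref{prop:H0} and~\ref{prop:H0-L1} provide for both ranges of $b^2$.
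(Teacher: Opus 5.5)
Your route is the paper's: the one-step Brascamp--Lieb bound \eqref{e:BL-large}, reduced by monotonicity to a scalar recursion $h\mapsto ah/(1+ch)$ with $a>1$, and iterated from the seed at $j_0$ given by Propositions~\ref{prop:H0}--\ref{prop:H0-L1}. Your explicit use of Proposition~\ref{prop:GHS} to replace $V_j''(\zeta)$ by $V_j''(0)$ is a good point. The paper's induction hypothesis must hold uniformly in the external field, because the expectation in \eqref{e:BL-large} sees $H_j$ at shifted fields. The seed, however, is only available at $\varphi=0$, and GHS is what bridges the two.

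The scalar reduction, however, mixes two normalizations, and the recursion you display is false. Take the block-value coordinate of the proof of Proposition~\ref{prop:GHS}, in which a $(j+1)$-block sees $L^2V_j$ and $L^{2j}$ times the Hessian at $0$ is just $V_j''(0)\approx\mu L^{(2+2b^2)j}$. There the increment with covariance $\gamma_{j+1}Q_{j+1}$ is constant on the block with variance $\gamma_{j+1}Q_{j+1}(x,x)=\log L$, not $\gamma_{j+1}$. Variance $\gamma_{j+1}$ is that of $(\zeta,1_{j+1})$, i.e.\ of the $L^2$-normalized coordinate of the $(j+1)$-block. In that coordinate your first display is fine, but then $V_j''$ and $V_{j+1}''$ both carry the same factor $L^{-2(j+1)}$, whereas you attach $L^{2j}$ and $L^{2j+2}$ to them. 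Either consistent choice gives
\[
  h_{j+1}\ \ge\ \frac{L^2h_j}{1+L^2(\log L)\,h_j}.
\]
This is the paper's $\rho_{j+1}\ge L^2\rho_j/(1+L^2\rho_j)$ with $\rho_j=(\log L)h_j$, and its fixed point is $(1-L^{-2})/\log L$.

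Your displayed $h_{j+1}\ge L^4h_j/(1+L^2(\log L)h_j)$ does not follow from your preceding line, which would give $L^4\log L$ in the denominator. It also cannot hold: its fixed point is $(L^2-L^{-2})/\log L>1/\log L$. Iterating it from $h_{j_0}>0$ would therefore force $h_j>1/\log L$ for large $j$, contradicting Proposition~\ref{prop:CR}, which gives $h_j<1/\log L$ for all $j$. Since your iteration only uses that the map is increasing of the form $ah/(1+ch)$ with $a>1$, substituting the correct recursion yields $h_j\ge\min\{h_{j_0},(1-L^{-2})/\log L\}\gtrsim 1$, and the proof then goes through unchanged.
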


There is also a simple upper bound (which essentially follows from the Cram\'er--Rao inequality)
that shows that the above two bounds are sharp for $b^2 \in (0,1/2)$.

\begin{proposition}  \label{prop:CR}
For any $b >0$ and $\mu >0$,
\begin{equation}
  L^{2j} \He V_j(0) \leq \frac{L^{(2+2b^2)j}\mu}{1+(\log L)L^{(2+2b^2)j}\mu}.
\end{equation}
More generally, for all $\varphi \in X_j$,
\begin{equation}
  L^{2j} \He V_j(\varphi) \leq \frac{L^{(2+2b^2)j}\mu \cosh(2b\varphi)}{1+(\log L)L^{(2+2b^2)j}\mu \cosh(2b\varphi)},
\end{equation}
where $\cosh(2b\varphi)$ is viewed as a $j$-block diagonal matrix (or by additivity of $V_j$ one can take $\varphi$ constant).
\end{proposition}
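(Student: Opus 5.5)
Reduce to a single block and a single real variable, express $\He V_j$ through a conditional variance, and bound that variance from below by the variance of a Gaussian summand.

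By additivity of $V_j$ over $j$-blocks, fix one $j$-block $B$ of area $L^{2j}$ and take $\varphi$ constant on $B$. Then $V_j$ is a function of one real variable, and the Hessian in the $L^2$ sense is $L^{-2j}V_j''(\varphi)$ per unit area. The claimed bound therefore reads $V_j''(\varphi)\le \frac{a}{1+(\log L)a}$ with $a=L^{(2+2b^2)j}\mu\cosh(2b\varphi)$, after checking this normalization.

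Since $V_j(\varphi)=-\log \Eg_{C_j}[e^{-V_{-N-1}(\varphi+\zeta)}]$, differentiating twice in the constant $\varphi$ gives
\begin{equation*}
V_j''(\varphi)=\avg{V_{-N-1}''}_\varphi-\var_\varphi(V_{-N-1}'),
\end{equation*}
where $\avg{\cdot}_\varphi$ is the tilted measure on $\zeta$. Alternatively, the Gaussian integration by parts (Girsanov) representation from the proof of Proposition~\ref{prop:GHS} gives the cleaner identity
\begin{equation*}
V_j''(\varphi)=\frac{1}{\sigma^2}-\frac{\var_\varphi(\bar\zeta)}{\sigma^4},
\end{equation*}
where $\bar\zeta$ is the coarsest component of the field on $B$ (the part with covariance $\gamma_j Q_j$, variance $\sigma^2=\gamma_jL^{-2j}=\log L$ up to normalization). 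Iterating over all scales, this becomes a statement about the total magnetisation.

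The Cram\'er--Rao route is as follows. Write $\zeta=\bar\zeta+\zeta'$ with $\bar\zeta\sim N(0,\sigma^2)$ the top-scale variable. The variance of $\bar\zeta$ under the tilted measure is bounded below by Cram\'er--Rao (equivalently, Brascamp--Lieb reversed, or the Cauchy--Schwarz bound $\var(X)\ge \avg{XY}^2/\avg{Y^2}$ with $Y$ the score). This yields $\var_\varphi(\bar\zeta)\ge \sigma^4/(\sigma^2+1/\text{Fisher})$. One can instead apply Jensen directly: by the convexity of $V\mapsto$ the Hessian, $-\log \Eg[e^{-W(\varphi+\zeta')}]$ is dominated by $\Eg[W(\varphi+\zeta')]$ in the quadratic sense.

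Concretely, I would argue as follows. Let $U(\psi)=-\log\Eg_{C_{j-1}}[e^{-V_{-N-1}(\psi+\zeta')}]$, i.e.\ the renormalized potential before the last Gaussian integration. Then $V_j=-\log(\text{Gaussian of variance }\sigma^2 * e^{-U})$. For Gaussian convolution, the standard inequality (Brascamp--Lieb applied to the tilted measure $\propto e^{-U(\varphi+s)-s^2/2\sigma^2}$) gives $V_j''(\varphi)\le \avg{U''}/(1+\sigma^2\avg{U''})$ only under convexity, which holds here since $\cosh$ is convex and convexity is preserved by Gaussian convolution by Pr\'ekopa. I would instead aim to use Jensen twice. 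First, $\var(\bar\zeta)\ge \sigma^4/(\sigma^2+\avg{U''}^{-1}\ldots)$ is awkward, so the better approach is to note that the map $a\mapsto a/(1+\sigma^2 a)$ is concave. Hence it suffices to prove the one-variable bound $V_j''(\varphi)\le \frac{\avg{U''}_{\mathrm{Gauss}}}{1+\sigma^2\avg{U''}_{\mathrm{Gauss}}}$, and then bound $\Eg_{C_j}[V_{-N-1}''(\varphi+\zeta)]=\frac{\mu_{-N-1}}{1}\cosh(2b\varphi)e^{2b^2\var\zeta}$ summed over the block. This equals exactly $L^{(2+2b^2)j}\mu\cosh(2b\varphi)$ by the normalization in the Remark after \eqref{eq:var-normalisation}.

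The main obstacle is the first inequality, namely showing the Hessian of a log-Gaussian convolution is at most $h/(1+\sigma^2 h)$ with $h$ the Gaussian-averaged Hessian rather than the tilted-measure average. My plan is to use the exact formula $V_j''=\sigma^{-2}-\sigma^{-4}\var_\varphi(s)$ together with the lower bound $\var_\varphi(s)\ge \sigma^2/(1+\sigma^2\avg{U''}_\varphi)$. The latter is the Cram\'er--Rao inequality for the location family $e^{-U(\varphi+s)-s^2/2\sigma^2}$, obtained from Cauchy--Schwarz with the score, whose Fisher information is $\avg{U''}_\varphi+\sigma^{-2}$. It then remains to replace the tilted average $\avg{U''}_\varphi$ by the untilted Gaussian average. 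For this I would use the identity $\avg{U''}_\varphi=$ (derivative structure), or argue that the tilt $e^{-V}$ decreases expectations of the increasing-in-$|\varphi+\zeta|$ function $\cosh$ via FKG/GHS, as in Proposition~\ref{prop:GHS}. Iterating over scales, or applying it once at full depth with $U=V_{-N-1}$ and all scales as $\sigma^2$, then finishes the proof.
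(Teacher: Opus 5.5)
Your skeleton matches the paper's, and the first steps are correct. You write the Hessian through the fluctuation variance, $V_j''=\sigma^{-2}-\sigma^{-4}\var_\varphi(s)$. Cram\'er--Rao with Fisher information $\sigma^{-2}+\langle U''\rangle_\varphi$ then gives $V_j''\le h/(1+\sigma^2h)$, where $h=\langle U''\rangle_\varphi$ is the \emph{tilted} average. You conclude from $h\le a:=\mu L^{(2+2b^2)j}\cosh(2b\varphi)$. Two small corrections: what is used about $h\mapsto h/(1+\sigma^2h)$ is that it is increasing, not that it is concave. And Brascamp--Lieb bounds the variance from above, so it gives a \emph{lower} bound on $V_j''$.

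\textbf{The gap is the step $h\le a$.} This step carries the actual content of the proposition, and you leave it between two sketches, neither of which works as written. In your concrete one-step setting ($U=L^2V_{j-1}$ on the block, $s\sim N(0,\log L)$) there are two problems.
\begin{itemize}
\item You identify $\Eg[U''(\varphi+s)]$ with the block sum of $\Eg_{C_j}[V_{-N-1}'']$. This is not an identity: $U''$ is a renormalized Hessian, and any inequality relating the two is of the same type as the proposition itself.
\item The tilt must be removed from $U''$, not from $\cosh$.
\end{itemize}
A correct completion is by induction in $j$, with base case the bare $\cosh$. By the proof of Proposition~\ref{prop:GHS}, $U''$ is even and nondecreasing in $|x|$, while $e^{-U}$ is even and nonincreasing in $|x|$ by convexity. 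Chebyshev's inequality for the single scalar $|\varphi+s|$ then gives $h\le\Eg[U''(\varphi+s)]$. Next use the weak form $L^{2(j-1)}\He V_{j-1}\le a_{j-1}$ of the claim at the previous scale, where $a_k=\mu L^{(2+2b^2)k}\cosh(2b\varphi)$. Together with $\Eg[\cosh(2b(\varphi+s))]=L^{2b^2}\cosh(2b\varphi)$, this gives $\Eg[U''(\varphi+s)]\le a_j$.

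In your full-depth variant ($U=V_{-N-1}$), the fluctuation is a field, not a scalar. FKG cannot be applied to $\cosh(2b(\varphi+\zeta_x))$, which is not monotone in $\zeta$. The idea you are missing is the heart of the paper's proof. Set $X=\int_B\epsilon^{2b^2}\cosh(2b(\varphi+\zeta))$. Because $\cosh''=4b^2\cosh$, the bare Hessian in the block direction is $(1_B,H_{-N-1}1_B)=\mu X$, and the tilt is $V_{-N-1}(B,\varphi+\zeta)=\frac{\mu}{4b^2}X$. Both are functions of the same real random variable $X$. One-dimensional FKG gives $\Eg[Xe^{-cX}]\le\Eg[X]\,\Eg[e^{-cX}]$, so the tilted average is at most $\mu\Eg_{C_j}[X]=a$. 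The paper combines this with the matrix Cram\'er--Rao bound $\mathrm{Cov}(\mu_{-N-1,j}^\varphi)\ge(C_j^{-1}+\langle H_{-N-1}\rangle)^{-1}$, projects with $Q_j$, and uses $\gamma_{-N,j}\ge\gamma_j=(\log L)L^{2j}$.

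The same observation also closes your one-step scheme without induction or GHS. Dropping the inner variance gives $U''(\varphi+s)\le\langle(1_B,H_{-N-1}1_B)\rangle$ under the fluctuation measure from $-N-1$ to $j-1$ at field $\varphi+s$. By the tower property, $h\le\langle\mu X\rangle_{\mu_{-N-1,j}^\varphi}\le a$. This gives the claimed bound directly with $\sigma^2=\log L$.
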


\subsection{General convexity estimates on the renormalized potential}

Unless stated otherwise, throughout this section $\varphi \in X_{-N}$.
Let
\begin{equation}
  H_j(\varphi) = \He V_j(\varphi).
\end{equation}
The Hessian of the renormalized potential at the ultraviolet scale is given by
the diagonal operator
\begin{equation}
  H_{-N-1}(\varphi) = \He V_{-N-1}(\varphi) = \diag (\mu_{-N-1}\cosh(2b\varphi(x)))_{x\in\Lambda_{-N}}.
\end{equation}
Define the fluctuation measure $\mu_{k,j}^\varphi$ from scale $k$ to $j$, with $k<j$:
\begin{equation} \label{e:Pkj-mukj-def}
  \PP_{k,j} F(\varphi)
  = \E_{\mu_{k,j}^\varphi}[F(\zeta)]
  = \frac{\Eg_{C_j-C_k}[e^{-V_{k}(\varphi+\zeta)} F(\varphi+\zeta)]}{\Eg_{C_j-C_k}[e^{-V_{k}(\varphi+\zeta)}]}
  ,
\end{equation}
and when the index $k$ is omitted we imply that $k=-N-1$ is the regularization scale.

The following lemma bounds the Hessian of the renormalized potential in terms of the bare one.
The subsequent lemma bounds the Hessian at one scale in terms of that
of the previous scale.

\begin{lemma}
\label{lem:BL}
As a quadratic form, for any $\varphi \in X_{-N}$,
\begin{equation} \label{e:H-BL}
  C_j^{1/2} H_j(\varphi) C_j^{1/2}
  \geq
  \PP_{-N-1,j}\qa{ \frac{C_j^{1/2} H_{-N-1}C_j^{1/2}}{1+C_j^{1/2}H_{-N-1}C_j^{1/2}}}(\varphi).
\end{equation}
\end{lemma}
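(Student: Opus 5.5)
The plan is to express $H_j$ through a covariance under the fluctuation measure $\mu^\varphi_{-N-1,j}$ and then bound that covariance with the Brascamp--Lieb inequality. Throughout I work on the finite-dimensional space $X_{-N}$ restricted to the finitely many $(-N)$-blocks in $\Lambda$. There $C_j=C_{-N,j}$ is positive definite, because it contains the term $\gamma_{-N}Q_{-N}$ and $Q_{-N}$ is the identity on $X_{-N}$. So $C_j^{-1}$ and $C_j^{\pm1/2}$ make sense, and I abbreviate $C=C_j$, $V_0=V_{-N-1}$, $H_0=H_{-N-1}$.

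\emph{Step 1 (Hessian as a covariance).} I would write the Gaussian expectation as an integral with density $\propto e^{-\frac12(\zeta,C^{-1}\zeta)}$ and substitute $\psi=\varphi+\zeta$:
\begin{equation*}
  e^{-V_j(\varphi)} \propto \int e^{-\frac12(\psi-\varphi,C^{-1}(\psi-\varphi)) - V_0(\psi)}\,d\psi .
\end{equation*}
Differentiating twice in $\varphi$ gives
\begin{equation*}
  H_j(\varphi) = C^{-1} - C^{-1}\operatorname{Cov}_{\mu^\varphi_{-N-1,j}}(\zeta)\,C^{-1},
\end{equation*}
where the covariance is that of the vector $\zeta$ under the probability measure with density $\propto e^{-\frac12(\zeta,C^{-1}\zeta)-V_0(\varphi+\zeta)}$, which is exactly $\mu^\varphi_{-N-1,j}$. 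Differentiating under the integral is justified because $\cosh$ grows only exponentially against the Gaussian weight. Conjugating by $C^{1/2}$ turns this into
\begin{equation*}
  C^{1/2}H_jC^{1/2} = 1 - C^{-1/2}\operatorname{Cov}(\zeta)\,C^{-1/2}.
\end{equation*}

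\emph{Step 2 (Brascamp--Lieb).} The measure $\mu^\varphi_{-N-1,j}$ has a strictly log-concave density whose Hamiltonian has Hessian
\begin{equation*}
  C^{-1}+H_0(\varphi+\zeta) > 0,
\end{equation*}
and $H_0$ is positive semidefinite since it is the diagonal matrix $\mu_{-N-1}\cosh(2b\,\cdot)\geq0$. I would apply the Brascamp--Lieb inequality to the linear functionals $f(\zeta)=(v,\zeta)$ with $v\in X_{-N}$, which gives
\begin{equation*}
  \operatorname{Cov}(\zeta) \le \E_{\mu^\varphi_{-N-1,j}}\big[(C^{-1}+H_0(\varphi+\zeta))^{-1}\big]
\end{equation*}
as quadratic forms. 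Conjugating by $C^{-1/2}$ yields
\begin{equation*}
  C^{-1/2}\operatorname{Cov}(\zeta)\,C^{-1/2} \le \PP_{-N-1,j}\big[(1+A)^{-1}\big](\varphi), \qquad A=C^{1/2}H_0C^{1/2},
\end{equation*}
with $H_0$ evaluated at $\varphi+\zeta$, consistent with the convention in \eqref{e:Pkj-mukj-def}.

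\emph{Step 3 (conclusion).} Substituting into Step 1 and using $1-(1+A)^{-1}=A(1+A)^{-1}$, which is valid pointwise because $A$ is a positive semidefinite symmetric matrix, gives
\begin{equation*}
  C^{1/2}H_jC^{1/2} \ge 1-\PP\big[(1+A)^{-1}\big] = \PP\big[A(1+A)^{-1}\big],
\end{equation*}
which is \eqref{e:H-BL}.

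The only real subtlety is Step 1's bookkeeping: one must check that the shift places the $\varphi$-dependence in the Gaussian factor, so that the resulting Gibbs measure is precisely $\mu^\varphi_{-N-1,j}$ and its Hessian is $C^{-1}+H_0$ rather than $H_0$ alone. Once that is in place, Brascamp--Lieb applies directly. If needed, the inequality for general $\varphi\in X_{-N}$ extends trivially, since everything is finite-dimensional at fixed $N$ and $\Lambda$.
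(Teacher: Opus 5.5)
Your proof is correct and gives exactly the bound \eqref{e:H-BL}. It is a mildly different route from the paper's. The paper starts from the representation
\[
(f,H_jf)=\E_{\mu^\varphi}\big[(f,H_{-N-1}f)\big]-\var_{\mu^\varphi}\big[(\nabla V_{-N-1},f)\big]
\]
(``Hessian minus variance of the force''). It then applies Brascamp--Lieb to the nonlinear observable $\hat F(\zeta)=(C_j^{1/2}\nabla V_{-N-1}(\zeta),f)$, whose gradient is $H_{-N-1}C_j^{1/2}f$. This gives $\var\hat F\le\E[(f,\hat H(1+\hat H)^{-1}\hat Hf)]$, and the algebraic identity $\hat H-\hat H(1+\hat H)^{-1}\hat H=\hat H/(1+\hat H)$ finishes the argument.

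You instead use $H_j=C_j^{-1}-C_j^{-1}\mathrm{Cov}(\mu^\varphi)C_j^{-1}$ and apply Brascamp--Lieb only to linear observables, so the only algebra is $1-(1+A)^{-1}=A(1+A)^{-1}$. The two Hessian representations are related by Gaussian integration by parts. Both Brascamp--Lieb steps use the same Hamiltonian Hessian $C_j^{-1}+H_{-N-1}$, which is why they land on the identical bound.

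What your version buys:
\begin{itemize}
\item It is a bit more economical: there is no gradient of a nonlinear functional to compute, and only the covariance form of Brascamp--Lieb is needed.
\item It is exactly dual to the paper's own proof of Proposition~\ref{prop:CR}. That proof starts from the same covariance representation but bounds $\mathrm{Cov}$ from below by Cram\'er--Rao, giving an upper bound on the Hessian, whereas you bound it from above by Brascamp--Lieb, giving a lower bound.
\end{itemize}

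What the paper's version buys:
\begin{itemize}
\item The identity $\He V_j=\E[H_{-N-1}]-\var[\nabla V_{-N-1}]$ holds without inverting the covariance.
\item It therefore carries over verbatim to the scale-by-scale bound \eqref{e:BL-large}, where $\gamma_jQ_j$ is degenerate on $X_{-N}$. In your approach that case requires first restricting everything to $X_j$, which is straightforward but an extra step.
\end{itemize}
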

\begin{proof}
%
This estimate follows from \cite[Section~3.4]{MR4798104}, but for convenience, we recall the above application of the Brascamp--Lieb inequality.
Indeed, since
\begin{equation}
  V_j(\varphi) = -\log \Eg_{C_j} \qa{e^{-V_{-N-1}(\varphi+\zeta)}},
\end{equation}
the Hessian is given by
\begin{equation}
  (f,H_j(\varphi)f) = (f,\He V_j(\varphi)f) = \E_{\mu_{-N-1,j}^\varphi}[(f,H_{-N-1}(\zeta)f)] - \var_{\mu_{-N-1,j}^\varphi}\qa{(\nabla V_{-N-1}(\zeta),f)}.
\end{equation}
Thus
\begin{equation}
  (f, C_j^{1/2} H_j(\varphi)C_j^{1/2}f) 
  = \E_{\mu_{-N-1,j}^\varphi}\qa{(f,C_j^{1/2}H_{-N-1}(\zeta)C_j^{1/2}f)} - \var_{\mu_{-N-1,j}^\varphi}\qa{(C_j^{1/2}\nabla V_{-N-1}(\zeta),f)}
  .
\end{equation}
Let $\hat F(\zeta) = (C_j^{1/2}\nabla V_{-N-1}(\zeta),f)$ and $\hat H(\zeta) = C_j^{1/2} H_{-N-1}(\zeta)C_j^{1/2}$. Then
\begin{equation}
  (C_j^{1/2}\nabla \hat F, g) = (f, C_j^{1/2} \He V_{-N-1}(\zeta) C_j^{1/2} g) = (f,\hat H g),
\end{equation}
and, by the Brascamp--Lieb inequality, using that $C_j$ is invertible on $X_{-N}$ and $\He V_{-N-1} \geq 0$,
\begin{align}
  \var_{\mu_{-N-1,j}^\varphi}\q{\hat F}
  &\leq \E_{\mu_{-N-1,j}^\varphi}\qa{(\nabla \hat F, (C_j^{-1}+ \He V_{-N-1})^{-1} \nabla \hat F)}
    \nnb
  &= \E_{\mu_{-N-1,j}^\varphi}\qa{(\nabla \hat F, C_j^{1/2} (1+  C_j^{1/2}\He V_{-N-1}C_j^{1/2})^{-1}C_j^{1/2} \nabla \hat F)}.
    \nnb
  &= \E_{\mu_{-N-1,j}^\varphi}\qa{(f,\hat H (1+ \hat H)^{-1} \hat H f)}.
\end{align}
Thus we obtain, as quadratic forms,
\begin{equation}
  C_j^{1/2}H_j(\varphi)C_j^{1/2}
  \geq \E_{\mu_{-N-1,j}^\varphi}\qa{\hat H - \hat H(1+\hat H)^{-1} \hat H}
  = \E_{\mu_{-N-1,j}^\varphi}\qa{\frac{\hat H}{1+\hat H}},
\end{equation}
which is the claimed bound.
\end{proof}

\begin{lemma}
 As a quadratic form, for any $\varphi \in X_j$ and $j>-N$,
\begin{equation} \label{e:BL-large}
  \gamma_j Q_jH_j(\varphi)Q_j
  \geq
  \PP_{j-1,j}
  \qa{\frac{\gamma_jQ_j H_{j-1} Q_j}{1+\gamma_j Q_j H_{j-1}Q_j}}(\varphi)
  .
\end{equation}
\end{lemma}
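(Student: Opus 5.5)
The plan is to repeat the Brascamp--Lieb argument of Lemma~\ref{lem:BL}, now for a single renormalization step from scale $j-1$ to scale $j$ instead of from the regularization scale $-N-1$. The starting point is the semigroup property of the renormalized potential:
\begin{equation*}
V_j(\varphi) = -\log \Eg_{C_j-C_{j-1}}\qa{e^{-V_{j-1}(\varphi+\zeta)}},
\qquad C_j - C_{j-1} = \gamma_j Q_j .
\end{equation*}
This follows from the Gaussian convolution property and the definition \eqref{e:Vj}. Differentiating twice in $\varphi$ gives, for a test function $f$,
\begin{equation*}
(f,H_j(\varphi)f) = \E_{\mu_{j-1,j}^\varphi}\qa{(f,H_{j-1}(\zeta)f)} - \var_{\mu_{j-1,j}^\varphi}\qa{(\nabla V_{j-1}(\zeta),f)} .
\end{equation*}
Here $\mu_{j-1,j}^\varphi$ is the fluctuation measure of \eqref{e:Pkj-mukj-def}. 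The field $\zeta$ has covariance $\gamma_jQ_j$ and therefore lives on $X_j$.

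Next I sandwich both sides with $\gamma_j^{1/2}Q_j$ and replace $f$ by $\gamma_j^{1/2}Q_jf$. Since $Q_j$ is a symmetric projection, this gives
\begin{equation*}
(f,\gamma_jQ_jH_jQ_jf) = \E\qa{(f,\hat H f)} - \var\qa{\hat F},
\end{equation*}
with $\hat H = \gamma_jQ_jH_{j-1}Q_j$ and $\hat F(\zeta) = \gamma_j^{1/2}(Q_j\nabla V_{j-1}(\zeta), f)$.

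To bound the variance I apply the Brascamp--Lieb inequality on the finite-dimensional space $X_j$ (restricted to $\Lambda$). There the Gaussian part has inverse covariance $(\gamma_jQ_j)^{-1}=\gamma_j^{-1}$ on $X_j$. The potential $\zeta\mapsto V_{j-1}(\varphi+\zeta)$ restricted to $X_j$ has Hessian $Q_jH_{j-1}Q_j$, and this is nonnegative. Nonnegativity holds because $V_{j-1}$ is convex: $H_{j-1}\ge H_{j-1}(0)\ge0$ by Proposition~\ref{prop:GHS}, or more simply because $V_{j-1}$ is a log-Laplace transform of a convex $V_{-N-1}$ under a Gaussian, hence convex by Prékopa.

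The gradient of $\hat F$ along $X_j$ is $\gamma_j^{1/2}Q_jH_{j-1}Q_jf$. Brascamp--Lieb then gives
\begin{equation*}
\var \hat F \le \E\qa{\bigl(\gamma_j^{1/2}Q_jH_{j-1}Q_jf,\ (\gamma_j^{-1}+Q_jH_{j-1}Q_j)^{-1}\gamma_j^{1/2}Q_jH_{j-1}Q_jf\bigr)} = \E\qa{(f,\hat H(1+\hat H)^{-1}\hat H f)} .
\end{equation*}
Subtracting this from $\E[\hat H]$ yields
\begin{equation*}
\E\qa{\hat H - \hat H(1+\hat H)^{-1}\hat H} = \E\qa{\hat H(1+\hat H)^{-1}},
\end{equation*}
which is the right-hand side of \eqref{e:BL-large}.

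The step I expect to need the most care is the bookkeeping on $X_j$ versus $X_{j-1}$. The operator $\gamma_jQ_j$ is invertible only on $X_j$, so Brascamp--Lieb has to be applied there, with the directional derivative of $\hat F$ along $X_j$ producing $Q_jH_{j-1}Q_j$ rather than $H_{j-1}$. The expression $\hat H(1+\hat H)^{-1}$ must then be read on $X_j$, extended by $0$ on its complement, where both sides of \eqref{e:BL-large} vanish. The hypothesis $\varphi\in X_j$ is what makes the fluctuation measure supported in the same space as the shift, so that $\PP_{j-1,j}$ is well defined.
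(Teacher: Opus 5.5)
Your proposal is correct and is essentially the paper's proof: the one-step variance formula for $H_j$ under $\mu_{j-1,j}^\varphi$, then Brascamp--Lieb on $X_j$, where the fluctuation measure has Hessian $\gamma_j^{-1}+Q_jH_{j-1}Q_j$, then the identity $\hat H-\hat H(1+\hat H)^{-1}\hat H=\hat H(1+\hat H)^{-1}$. Your rescaling $f\mapsto\gamma_j^{1/2}Q_jf$ and explicit restriction to $X_j$ only make precise the paper's more informal inverse $(\gamma_j^{-1}Q_j+\He V_{j-1})^{-1}$; incidentally, $\varphi\in X_j$ is not needed for $\PP_{j-1,j}$ to be well defined, since only the fluctuation $\zeta$ must live in $X_j$.
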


\begin{proof}
The proof is very similar to the previous one. We start from
\begin{equation}
  (f, Q_j H_j(\varphi)Q_j f)
  = \E_{\mu_{j-1,j}^\varphi}\qa{(f,Q_jH_{j-1}(\zeta)Q_jf)} - \var_{\mu_{j-1,j}^\varphi}\qa{(Q_j\nabla V_{j-1}(\zeta),f)}
  .
\end{equation}
Let $\hat F(\zeta) = (Q_j\nabla V_{j-1}(\zeta),f)$ and $\hat H(\zeta) = Q_j H_{j-1}(\zeta)Q_j$. Then
\begin{equation}
  (Q_j\nabla \hat F, g) = (f, Q_j \He V_{j-1}(\zeta) Q_j g) = (f,\hat H g),
\end{equation}
and, by the Brascamp--Lieb inequality,  
\begin{align}
  \var_{\mu_{j-1,j}^\varphi}\q{\hat F}
  &\leq \E_{\mu_{j-1,j}^\varphi}\qa{(\nabla \hat F, (\gamma_j^{-1} Q_j+ \He V_{j-1})^{-1} \nabla \hat F)}
    \nnb
  &= \gamma_j  \E_{\mu_{j-1,j}^\varphi}\qa{(\nabla \hat F, Q_j (1+ \gamma_j Q_j\He V_{j-1}Q_j)^{-1}Q_j \nabla \hat F)}.
    \nnb
  &= \gamma_j  \E_{\mu_{j-1,j}^\varphi}\qa{(f,\hat H (1+\gamma_j \hat H)^{-1} \hat H f)}.
\end{align}
Thus
\begin{equation}
  \gamma_j Q_jH_j(\varphi)Q_j
  \geq \gamma_j \E_{\mu_{j-1,j}^\varphi}\qa{\hat H - \gamma_j\hat H(1+\gamma_j\hat H)^{-1} \hat H}
  ,
\end{equation}
which is the claim.
\end{proof}

\subsection{Convexity for $b^2 \in (0,1/2)$: Proof of Proposition~\ref{prop:H0}}

The proof of Proposition~\ref{prop:H0} uses the following very crude bound which is simply Jensen's inequality.
We consider a single $j$-block $B$, $j=-N,\ldots, M$, and will often abbreviate its side length by $\ell= L^j$.

\begin{lemma}
  For any $b>0$ and $\mu>0$, and any fixed $j$-block $B$ with $|B|=L^{2j}=\ell^2$ and $\varphi\in X_{-N}$:
  \begin{equation} \label{e:VB-lbub}
    0 \geq V_j(B,0)-V_j(B,\varphi) \geq -O(\mu \ell^{2b^2})\int_{B} \cosh(2b\varphi(y)) \, dy.
  \end{equation}
\end{lemma}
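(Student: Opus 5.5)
We prove both inequalities with Jensen's inequality, using that $V_j(B,\cdot)$ is a log-Laplace transform of the nonnegative convex functional $V_{-N-1}$ under the centered Gaussian field $\zeta$ with covariance $C_j$, restricted to $B$. By definition,
\begin{equation}
  V_j(B,\varphi) = -\log \Eg_{C_j}\qa{e^{-V_{-N-1}(B,\varphi+\zeta)}}.
\end{equation}

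\emph{Upper bound $V_j(B,0)\le V_j(B,\varphi)$.} Since $C_j$ is centered, the law of $\zeta$ is invariant under $\zeta\mapsto-\zeta$, and $\cosh$ is even. Hence $\varphi\mapsto V_j(B,\varphi)$ is even. It is also convex, because $\Eg[e^{-V_{-N-1}(\varphi+\zeta)}]$ is log-concave in $\varphi$ by Pr\'ekopa's theorem ($V_{-N-1}$ is convex and the Gaussian density is log-concave). Consequently
\begin{equation}
  V_j(B,0)\le \tfrac12\bigl(V_j(B,\varphi)+V_j(B,-\varphi)\bigr)=V_j(B,\varphi).
\end{equation}
Proposition~\ref{prop:GHS} would not suffice here, since $\varphi\in X_{-N}$ is only constant on $(-N)$-blocks.

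\emph{Lower bound.} Since $V_j(B,0)\ge 0$ (because $V_{-N-1}\ge 0$), it suffices to show
\begin{equation}
  V_j(B,\varphi)\le O(\mu\ell^{2b^2})\int_B\cosh(2b\varphi)\,dy .
\end{equation}
By Jensen's inequality applied to $-\log$,
\begin{equation}
  V_j(B,\varphi)\le \Eg_{C_j}[V_{-N-1}(B,\varphi+\zeta)]
  =\frac{\mu_{-N-1}}{4b^2}\int_B \Eg\qa{\cosh(2b(\varphi(y)+\zeta(y)))}\,dy .
\end{equation}
For fixed $y$, $\zeta(y)$ is Gaussian with variance $\sigma^2=\sum_{k=-N}^j\gamma_kL^{-2k}=(N+j+1)\log L$. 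Hence
\begin{equation}
  \Eg\qa{\cosh(2b(\varphi+\zeta))}=\cosh(2b\varphi)\,e^{2b^2\sigma^2}=\cosh(2b\varphi)\,L^{2b^2(N+j+1)} .
\end{equation}
Multiplying by $\mu_{-N-1}=\mu L^{-2b^2(N+1)}$ gives exactly $\mu L^{2b^2 j}=\mu\ell^{2b^2}$, times $\cosh(2b\varphi(y))/(4b^2)$, which is the claim with constant $1/(4b^2)$.

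There is no real obstacle. The only points needing care are checking that $\operatorname{Var}\zeta(y)$ equals $(N+j+1)\log L$ under the normalization \eqref{e:CNM}, so that the $N$-dependence cancels, and justifying the symmetry/convexity step for the first inequality.
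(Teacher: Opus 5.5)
Your proof is correct and follows the paper's argument. The lower bound comes from $V_{-N-1}\ge 0$ together with Jensen's inequality and the Gaussian computation $\mu_{-N-1}L^{2b^2(N+j+1)}=\mu\ell^{2b^2}$. The inequality $V_j(B,0)\le V_j(B,\varphi)$ comes from evenness plus convexity of $V_j(B,\cdot)$.

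The only difference is where convexity comes from: you use Pr\'ekopa's theorem, while the paper cites the Brascamp--Lieb Hessian bound of Lemma~\ref{lem:BL}. Either works, and you are right that only convexity, not strict convexity, is needed.
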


\begin{proof}
  By definition,
  \begin{equation} \label{e:VN-bis}
    V_{-N-1}(B,\varphi)
    = \frac{\mu_{-N-1}}{4b^2} \int_B \cosh(2b\varphi(x)), \qquad \mu_{-N-1}= \mu L^{-2b^2(N+1)},
  \end{equation}
  and $e^{-V_{j}(B,\varphi)} = \Eg_{C_j}[e^{-V_{-N-1}(B,\zeta+\varphi)}]$ by the definition \eqref{e:Vj}.
  Using $V_{-N-1} \geq 0$ for the lower bound and Jensen's inequality for the upper bound, 
  it follows that
  \begin{equation} \label{e:VB-Jensen}
    0 \leq V_j(B,\varphi) \leq O(\mu L^{2b^2j})\int_{B} \cosh(2b\varphi(y)) \, dy.
  \end{equation}
  In more detail, the upper bound is:
  \begin{equation}
    e^{-V_{j}(B,\varphi)}
    =
    \Eg_{C_j}[e^{-V_{-N-1}(B,\zeta+\varphi)}]
    \geq
    \exp\pB{-\Eg_{C_j}[V_{-N-1}(B,\zeta+\varphi)]}
  \end{equation}
  where
  \begin{equation} \label{e:VN-expect}
    \Eg_{C_j}[V_{-N-1}(B,\zeta+\varphi)]
    = \Eg_{C_j} \qa{ \int_{B} \frac{\mu}{4b^2}\epsilon^{2b^2} \cosh(2b(\zeta+\varphi)) \, dx}
    = \frac{\mu}{4b^2} L^{2b^2j} \int_{B}  \cosh(2b\varphi) \, dx.
  \end{equation}
  This proves \eqref{e:VB-Jensen}.
  The lower bound in \eqref{e:VB-lbub} immediately follows. The upper bound in \eqref{e:VB-lbub}
  holds because $X_{-N} \ni \varphi \mapsto V_j(B,\varphi)$
  is symmetric and strictly convex (by Lemma~\ref{lem:BL}) and thus has its global minimum at $0$.
\end{proof}

The threshold $b^2<1/2$ will appear through the following integrability estimate for the covariance.
It is completely analogous to its Euclidean version.

\begin{lemma} \label{lem:C-bd}
  For $x,y\in \R^2$ such that $d_H(x,y)\leq \ell = L^j$,
  \begin{equation}
    C_j(x,y)
    = \log \frac{\ell}{d_H(x,y)} + O(1)
    ,
  \end{equation}
  and $C_j(x,y)=0$ if $x,y$ are not in the same $j$-block $B$.
  In particular, for any $a < 2$ and $b\in \R$,
  \begin{equation}
    \int_B C_j(x,y)^b e^{a C_j(x,y)} \, dy \lesssim |B| = \ell^2.
  \end{equation}
\end{lemma}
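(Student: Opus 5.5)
The plan is to compute $C_j(x,y)$ directly from the explicit form $C_j=\sum_{k=-N}^j \gamma_k Q_k$ with $\gamma_k = L^{2k}\log L$. Here $Q_k$ has kernel $Q_k(x,y)=L^{-2k}\1_{\{x,y \text{ in the same $k$-block}\}}$, so $\gamma_kQ_k(x,y)=(\log L)\1_{\{j(x,y)\le k\}}$. Summing over $k$ gives, for $x\neq y$,
\begin{equation*}
  C_j(x,y) = (\log L)\,\#\{k: \max(j(x,y),-N)\le k\le j\} = (\log L)\,(j-\max(j(x,y),-N)+1)_+ .
\end{equation*}
If $x,y$ are not in the same $j$-block, then $j(x,y)>j$ and the sum is empty, so $C_j(x,y)=0$. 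If $d_H(x,y)=L^{j(x,y)}\le \ell=L^j$ and $j(x,y)\ge -N$, this equals $\log(\ell/d_H(x,y))+\log L$, which is the claimed formula with $O(1)=\log L$.

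At distances below the cutoff ($j(x,y)<-N$), the value saturates at $(\log L)(j+N+1)\le \log(\ell/d_H(x,y))+O(1)$. So the exact equality is up to the cutoff, and for the integral bound we only need the upper bound $C_j(x,y)\le \log(\ell/d_H(x,y))+\log L$, which holds uniformly in $N$. On the diagonal one takes the same cutoff value, consistent with the variance $(\log L)(j+N+1)$.

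For the integral, fix $x\in B$ and decompose $B$ into the shells $A_k=\{y\in B: j(x,y)=k\}$ for $k\le j$ (plus the $(-N)$-block of $x$ when the cutoff is active). The $k$-block containing $x$ has area $L^{2k}$, so $|A_k|\le L^{2k}$, and on $A_k$ we have $C_j\le (\log L)(j-k+1)$. Then
\begin{equation*}
  \int_B C_j(x,y)^b e^{aC_j(x,y)}\,dy \le \sum_{k\le j} L^{2k}\,\big((\log L)(j-k+1)\big)^b L^{a(j-k+1)} = \ell^2 L^{a}\sum_{n\ge 0} L^{-(2-a)n}\big((\log L)(n+1)\big)^b ,
\end{equation*}
substituting $n=j-k$. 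The series converges precisely because $a<2$: the geometric factor beats any polynomial factor $(n+1)^b$, for any real $b$. The bound therefore depends only on $a,b,L$, giving $\lesssim \ell^2$ uniformly in $N$. The cutoff block contributes a term of the same form (area $L^{-2N}$, value $(\log L)(j+N+1)$), which is the $n=j+N$ term.

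The only care needed, which is a bookkeeping issue rather than a real obstacle, concerns negative $b$ at the $n=0$ shell. There $C_j\ge \log L>0$, so $C_j^b$ is finite and bounded, and no singularity arises. Also note that this exponent $b$ is a free real parameter, distinct from the coupling $b$.
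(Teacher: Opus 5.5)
Your proof is correct, but it takes a different route from the paper.

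The paper rescales to the unit block and bounds $\int_{d_H(0,x)\le 1} d_H(0,x)^{-a}(\log d_H(0,x)^{-1})^b\,dx$ by comparing hierarchical with Euclidean distance via $|x-y|\le\sqrt{2}\,d_H(x,y)$. This reduces the claim to the classical integrability of $|x|^{-a}$ near $0$ for $a<2$.

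You stay entirely inside the hierarchical structure. The exact formula $C_j(x,y)=(\log L)(j-\max(j(x,y),-N)+1)_+$, together with the decomposition of $B$ into the shells $\{j(x,y)=k\}$, turns the integral into the series $\ell^2L^{a}\sum_{n\ge 0}L^{-(2-a)n}((\log L)(n+1))^b$. This makes explicit several points the paper leaves implicit:
\begin{itemize}
\item the $O(1)$ is exactly $\log L$;
\item below the ultraviolet cutoff $C_j$ saturates (so the asymptotic equality only holds for $d_H(x,y)\ge L^{-N}$), and the bound is visibly uniform in $N$;
\item the cases $a<0$ and $b<0$ are covered.
\end{itemize}
On the last point, the paper's argument is written for $0\le a<2$. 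For $b<0$ it also tacitly needs the $+\log L$ shift, since otherwise the logarithm vanishes on the outermost shell, which has positive measure. The paper's route is shorter and makes the analogy with the Euclidean log-covariance transparent.

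One small point of precision in your write-up: for $b<0$ or $a<0$ the map $t\mapsto t^b e^{at}$ is not monotone. So on each shell you should use that $C_j$ \emph{equals} $(\log L)(j-k+1)$ there (for $k>-N$), not merely that it is bounded above by this. Your separate treatment of the $(-N)$-block of $x$, with its exact value $(\log L)(j+N+1)$, already ensures this.
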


\begin{proof}
The proof is a direct computation using \eqref{e:C-decomp} and $\gamma_k Q_k(x,y)= (\log L) \1_{d_H(x,y) \leq L^k}$.
By hierarchical translation invariance we may assume that $B$ is the $j$-block containing $0\in \R^2$.
Then,
\begin{equation}
  \int_{d_H(0,x)\leq \ell} \pbb{\frac{\ell}{d_H(0,x)}}^{a} \pbb{\log \frac{\ell}{d_H(0,x)}}^b \, dx
  =
  \ell^2 \int_{d_H(0,x)\leq 1} d_H(0,x)^{-a} (\log d_H(0,x)^{-1})^{b} \, dx
  \lesssim \ell^2,
\end{equation}
by convergence of the integral if $0\leq a<2$. To see convergence of the integral, observe that if
$s=d_H(x,y)$, then $x$ and $y$ lie in the same $\frac{\log s}{\log L}$-block. So the Euclidean distance between $x$ and $y$ 
is at most the diameter of such a block, which is $\sqrt{2}d_H(x,y)$. The integral can therefore be bounded above by the classical integral of inverse-powers of $\abs{x}$.
\end{proof}

\begin{proof}[Proof of Proposition~\ref{prop:H0}]
By the hierarchical structure, we can restrict to a single $j$-block $B$.

In the remainder of the proof, we estimate the right-hand side of \eqref{e:H-BL} for $b^2 < 1/2$.
Using that $X/(1+X) \geq X-X^2$ as quadratic forms for $X\geq 0$ the following bound is convenient for $b^2<1/2$:
\begin{equation}
  C_j^{1/2} H_j C_j^{1/2}
  \geq \PP_{-N-1,j}\qa{ C_j^{1/2} H_{-N-1} C_j^{1/2}} - \PP_{-N-1,j}\qa{ (C_j^{1/2}H_{-N-1}C_j^{1/2})^2}
  ,
\end{equation}
which gives by projecting with $Q_j$ and $Q_jC_j = (\sum_{k\leq j}\gamma_k) Q_j$ that
\begin{equation}
  Q_{j}H_j(\varphi)Q_j
  \geq \PP_{-N-1,j}\qa{Q_j H_{-N-1} Q_j} - \PP_{-N-1,j}\qa{Q_j H_{-N-1} C_j H_{-N-1} Q_j}.
\end{equation}

To estimate the first term on the right-hand side from below, recall that
\begin{align}
  H_{-N-1}(\varphi) = \diag(H_{xx}(\varphi)), \qquad H_{xx}(\varphi) = \mu_{-N-1}\cosh(2b\varphi(x)),
\end{align}
where $\mu_{-N-1} = L^{-2b^2(N+1)} \mu$.
By Girsanov's formula, with $\mu_j = L^{2b^2 j}\mu$,
\begin{align}
  \Eg_{C_{j}}[\mu_{-N-1} e^{\pm 2b\zeta(x)}F(\zeta)]
  &= \mu_{-N-1} e^{+2b^2 C_j(x,x)} \Eg_{C_{j}}[F(\zeta\pm 2 b C_j(x,\cdot))]
  \nnb
  &= \mu_j \Eg_{C_{j}}[F(\zeta\pm 2 b C_j(x,\cdot))],
\end{align}
and we obtain
\begin{align}
  \PP_{-N-1,j}\qa{ H_{xx}} (\varphi)
  &=
  \PP_{-N-1,j}\qa{ \mu_{-N-1} \cosh(2b(\varphi(x)+\zeta(x))) }(\varphi)
  \nnb
  &= \frac12 e^{2b\varphi(x)} \PP_{-N-1,j}\qa{ \mu_{-N-1} e^{2b\zeta(x)}}(\varphi) + \frac12 e^{-2b\varphi(x)} \PP_{-N-1,j}\qa{ \mu_{-N-1} e^{-2b\zeta(x)}}(\varphi)
    \nnb
  &= \frac12 \mu_j e^{2b\varphi(x)} \frac{\Eg_{C_j}[e^{-V_{-N-1}(\varphi+\zeta + 2b C_j(x,\cdot))}]}{\Eg_{C_j}[e^{-V_{-N-1}(\varphi+\zeta)}]}
    + \frac12 \mu_j e^{-2b\varphi(x)} \frac{\Eg_{C_j}[e^{-V_{-N-1}(\varphi+\zeta - 2b C_j(x,\cdot))}]}{\Eg_{C_j}[e^{-V_{-N-1}(\varphi+\zeta)}]} 
    \nnb
  &\geq \mu_j \cosh(2b\varphi(x)) \min_{\pm} e^{-V_j(\varphi\pm 2b C_j(x,\cdot))+V_j(\varphi)}
    .
\end{align}
The term involving the renormalized potential is estimated using the Jensen bound \eqref{e:VB-lbub} if $\varphi=0$
using Lemma~\ref{lem:C-bd}:
\begin{equation} \label{e:Vj-Jensen-L2}
  -V_j(2bC_j(x,\cdot))+V_j(0)
  \geq -O(\mu \ell^{2b^2}) \int_B \cosh(4b^2 C_j(x,y)) \, dy
  \gtrsim -\mu\ell^{2+2b^2}
\end{equation}
for $b^2 < 1/2$. Thus
\begin{equation}
  e^{-V_j(2bC_j(x,\cdot))+V_j(0)} \geq 1-O(\mu\ell^{2+2b^2}).
\end{equation}
We conclude that
\begin{align}
  \PP_{-N-1,j} [H_{xx}] (0) \geq \mu \ell^{2b^2} (1-O(\mu \ell^{2+2b^2})).
\end{align}

Similarly, for the second order term we have the following upper bound from Girsanov and the Jensen bound \eqref{e:VB-lbub}:
\begin{align}
  \PP_{-N-1,j}[H_{xx}H_{yy}](0)
  &  \lesssim (\mu \ell^{2b^2})^2 \max_{\pm} e^{\pm 4b^2 C_j(x,y)}   
    \max_{\pm} e^{-V_j(2bC_j(x,\cdot)\pm 2bC_j(y,\cdot))+V_j(0)}
    \nnb
    &\lesssim (\mu \ell^{2b^2})^2 \max_{\pm} e^{\pm 4b^2 C_j(x,y)}  
    e^{V_j(0)} 
  \lesssim (\mu \ell^{2b^2})^2 \max_{\pm} e^{\pm 4b^2 C_j(x,y)}  
\end{align}
where we used 
\begin{equation}
  \mu_{-N-1}^2 \Eg_{C_j}[e^{2b\zeta(x) \pm 2b\zeta(y)}] = \mu_j^2 e^{\pm 4b^2 C_j(x,y)} \approx \mu_j^2 \qa{\frac{\ell}{d_H(x,y)}}^{\pm 4b^2}.
\end{equation}
Thus, again using that $b^2<1/2$, 
by Lemma~\ref{lem:C-bd}:
\begin{equation}
  \PP_{-N-1,j}[Q_jH C_j H Q_j]
  \lesssim (\mu \ell^{2b^2})^2 \int_{d_H(0,x)\lesssim \ell} \max_{\pm} e^{\pm 4b^2 C_j(x,y)} C_j(x,y) \, dx
  \lesssim (\mu \ell^{2b^2})^2 \ell^2
  .
\end{equation}
In summary,
\begin{equation}
  \ell^2 H_j(0) \geq \ell^2 \PP_{-N-1,j} [Q_jH Q_j]   - \ell^2 \PP_{-N-1,j}[Q_jH C_j H Q_j] \geq \mu\ell^{2+2b^2} - O(\mu \ell^{2+2b^2})^2
\end{equation}
which is the claimed bound.
\end{proof}

\subsection{Convexity for $b^2 \in [1/2,1)$: Proof of Proposition~\ref{prop:H0-L1}}

We start from the same application of the Brascamp--Lieb inequality \eqref{e:H-BL}
now followed by the quadratic form inequality $X/(1+X) \geq X-C_pX^p$ for $X\geq 0$ with $p\in (1,2)$ to be chosen:
\begin{equation}
  C_j^{1/2} H_j C_j^{1/2}
  \geq \PP_{-N-1,j}\qa{ C_j^{1/2} H_{-N-1} C_j^{1/2}} - C_p\PP_{-N-1,j}\qa{ (C_j^{1/2}H_{-N-1}C_j^{1/2})^p}
  .
\end{equation}
To bound $\PP_{-N-1,j}[\hat H^p]$ where $\hat H=C_j^{1/2} H_{-N-1} C_j^{1/2}$, we use the following interpolation inequality.

\begin{lemma}
Let $p\in (1,2)$. Then for any symmetric positive-definite matrix $\hat H$ and any vector $f$,
\begin{equation}
  (f,\hat H^p f) \leq (f,\hat H f)^{2-p} (f,\hat H^2 f)^{p-1}.
\end{equation}
\end{lemma}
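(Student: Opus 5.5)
We diagonalize $\hat H$ and reduce the claim to Hölder's inequality for a scalar measure. Since $\hat H$ is symmetric and positive definite, we write $\hat H = \sum_i h_i\, e_i e_i^{\mathsf T}$ with an orthonormal eigenbasis $(e_i)$ and eigenvalues $h_i > 0$. For a fixed vector $f$, set $w_i = (e_i,f)^2 \geq 0$. Then for every $q > 0$,
\begin{equation}
  (f,\hat H^q f) = \sum_i w_i h_i^q .
\end{equation}
So the claim is equivalent to
\begin{equation}
  \sum_i w_i h_i^p \leq \Big(\sum_i w_i h_i\Big)^{2-p}\Big(\sum_i w_i h_i^2\Big)^{p-1},
\end{equation}
a log-convexity statement for the moments $q \mapsto \sum_i w_i h_i^q$ of the measure $\sum_i w_i \delta_{h_i}$.

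To prove it, we write $p = \theta\cdot 1 + (1-\theta)\cdot 2$ with $\theta = 2-p \in (0,1)$ and $1-\theta = p-1$. Then we split $w_i h_i^p = (w_i h_i)^{\theta}\,(w_i h_i^2)^{1-\theta}$. Applying Hölder's inequality with exponents $1/\theta$ and $1/(1-\theta)$ gives
\begin{equation}
  \sum_i (w_i h_i)^{\theta}(w_i h_i^2)^{1-\theta} \leq \Big(\sum_i w_i h_i\Big)^{\theta}\Big(\sum_i w_i h_i^2\Big)^{1-\theta},
\end{equation}
which is exactly the claim.

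There is no real obstacle. The only points to check are the exponent bookkeeping, namely that $\theta + 2(1-\theta) = p$, and that the argument uses only $h_i \geq 0$, so it extends by continuity to positive semidefinite $\hat H$. In infinite dimensions the same argument goes through with the spectral measure of $\hat H$ associated with $f$ in place of the eigenbasis sum. This is relevant because the lemma will be applied to $\hat H = C_j^{1/2} H_{-N-1} C_j^{1/2}$, pointwise in $\zeta$, before taking $\PP_{-N-1,j}$.
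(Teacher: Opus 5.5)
Your proof is correct and is essentially the paper's argument: diagonalize $\hat H$, write $\lambda_i^p=\lambda_i^{2-p}\lambda_i^{2(p-1)}$, and apply H\"older's inequality with conjugate exponents $1/(2-p)$ and $1/(p-1)$ against the weights $(f,e_i)^2$. Your remark that only $h_i\geq 0$ is needed also matches the paper, which likewise allows eigenvalues $\lambda_i\geq 0$.
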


\begin{proof}
Let $\lambda_i \geq 0$ and $e_i$ be the eigenvalues and eigenvectors of $\hat H$.
Using that
\begin{equation}
  (\lambda^{p-\alpha})^{1/(2-p)}
  = \lambda
  , \qquad
  (\lambda^\alpha)^{1/(p-1)} = \lambda^2,
\end{equation}
with $\alpha=2(p-1) \in (0,p)$, then
\begin{equation}
  \sum_i \lambda_i^p (f,e_i)^2
  = \sum_i \lambda_i^{p-\alpha} \lambda_i^{\alpha} (f,e_i)^2
  \leq \qa{ \sum_i \lambda_i (f,e_i)^2}^{2-p}\qa{ \sum_i \lambda_i^{2} (f,e_i)^2}^{p-1}
\end{equation}
by H\"older's inequality with conjugate exponents $1/(2-p)$ and $1/(p-1)$
which are both in $(1,\infty)$ for $p\in (1,2)$.
\end{proof}

Therefore, now by H\"older in the probability distribution,
\begin{align}
  \PP_{-N-1,j}\qa{ (f,\hat H^p f)}
  &\leq \PP_{-N-1,j}\qa {(f,\hat H f)^{2-p} (f, \hat H^2 f)^{p-1} }
  \nnb
  &\leq \PP_{-N-1,j}\qa {(f,\hat H f)^{q(2-p)}}^{1/q} \PP_{-N-1,j}\qa{(f, \hat H^2 f)^{q'(p-1)} }^{1/q'}
\end{align}
where $q>1$ and $1/q'=1-1/q = (q-1)/q$ so $q'=q/(q-1)$.
Let $1_j$ be the $L^2$ normalized indicator function on a fixed block $B$ of scale $j$.
Since $C_j^{1/2} 1_j = \gamma_{-N,j}^{1/2} 1_j$ with $\gamma_{-N,j} \approx L^{2j}$ (see Appendix~\ref{app:hier})
we thus obtain
\begin{align} \label{e:He-L1-BLp}
  L^{2j} (1_j, H_j 1_j)
  &\geq L^{2j}\PP_{-N-1,j}\qa{ (1_j, H_{-N-1} 1_j)}
  \nnb
  &\qquad
  - C_p L^{2pj} \PP_{-N-1,j}\qa {(1_j, H_{-N-1} 1_j)^{q(2-p)}}^{1/q}
  \nnb
  &\qquad
  \qquad \times
  \PP_{-N-1,j}\qa{(1_j, H_{-N-1} C_j H_{-N-1} 1_j)^{q'(p-1)} }^{1/q'}
  .
\end{align}

We need to bound the first expectation on the right-hand side from below and the two expectations in the second term from above.
The proof uses the following $L^p$ bounds for the Gaussian multiplicative chaos (GMC).
Let $M^\pm$ be the following version of the GMC defined in terms of $\pm \varphi$ (and normalized with explicit
power of $\epsilon$):
\begin{equation} \label{e:M-def}
  M(B) = \frac12(M^+(B)+M^-(B)), \qquad
  M^\pm(B)
  =
  \int_B \epsilon^{2b^2} e^{\pm 2b \varphi(x)} \, dx.
\end{equation}
Let $p_*(b) >1$ be such that $\xi(p_*(b))=2$ where
\begin{equation}
  \xi(p) = (2+2b^2)p - 2b^2 p^2.
\end{equation}

\begin{lemma} \label{lem:GMC-Lp}
  For all $p \in (0,p_*(b))$ and all blocks $B$ at scale $k\leq j$:
  \begin{equation} \label{e:GMC-moment}
    \Eg_{C_j}[M^\pm(B)^p] \lesssim L^{(2+2b^2)pj} L^{(k-j)\xi(p)},
  \end{equation}
  with implicit constant depending on $p$ and $b$.
\end{lemma}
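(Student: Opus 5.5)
We prove \eqref{e:GMC-moment} using the exact hierarchical (multiplicative cascade) structure of $C_j$, in the style of the classical Kahane--Peyrière moment bounds. By symmetry it suffices to treat $M^+$. We first reduce to the case $k=j$. Decompose the field under $C_j$ as $\zeta = \zeta_{k}^{\rm lo} + \zeta_{k}^{\rm hi}$, where $\zeta_k^{\rm hi}$ has covariance $C_k$ (the scales $-N,\dots,k$) and $\zeta^{\rm lo}_k$ has covariance $C_j-C_k=\sum_{i=k+1}^j\gamma_iQ_i$. On the $k$-block $B$, the field $\zeta^{\rm lo}_k$ is a constant Gaussian $Z$ of variance $\sum_{i=k+1}^j \gamma_i Q_i(x,x)=(j-k)\log L$, and it is independent of $\zeta^{\rm hi}_k$. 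Hence $M^+(B)=e^{2bZ}\tilde M^+(B)$, where $\tilde M^+$ is the chaos built from $C_k$. This gives
\begin{equation}
\Eg_{C_j}[M^+(B)^p]=e^{2b^2p^2(j-k)\log L}\,\Eg_{C_k}[M^+(B)^p].
\end{equation}
Since $p^2$ rather than $p$ appears in the prefactor, the exponent already carries the $-2b^2p^2$ part of $\xi$.

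It remains to show $\Eg_{C_k}[M^+(B)^p]\lesssim L^{(2+2b^2)pk}$ for a $k$-block $B$, uniformly in $N$. Combined with the prefactor above, this gives $L^{(2+2b^2)pk+2b^2p^2(j-k)}=L^{(2+2b^2)pj}L^{(k-j)\xi(p)}$, as required. By the scaling covariance of the hierarchical field (a $k$-block with the covariance $C_{-N,k}$ is the image of a $0$-block with $C_{-N-k,0}$), together with $\epsilon^{2b^2}$ and area scaling, we may take $k=0$. The claim is then that $\sup_N \Eg_{C_{-N,0}}[M^+(B_0)^p]<\infty$ for $p<p_*(b)$. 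Note that $\xi(p)>2$ exactly on $(1,p_*)$.

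For $p\le1$ this follows from Jensen's inequality together with $\Eg[M^+(B_0)]=O(1)$, which is the remark after \eqref{eq:var-normalisation}. For $1<p<p_*$ we use the cascade recursion. Write $M_N$ for the total mass of a $0$-block at ultraviolet cutoff $-N$. Splitting off the top-scale constant $Z_0$ of variance $\log L$ gives the distributional identity
\begin{equation}
M_N \overset{d}{=} L^{-2-2b^2}e^{2bZ_0}\sum_{i=1}^{L^2} M_{N-1}^{(i)},
\end{equation}
with independent copies $M^{(i)}_{N-1}$. For $p\in(1,2]$ we use the inequality $(\sum x_i)^p\le \sum x_i^p+\big(\sum x_i\big)^{p}-\sum x_i^p$ and control the cross terms through the lower-order moment; concretely, $(\sum_i x_i)^p \le \sum_i x_i^p + C\sum_{i\ne i'} x_i^{p-1}x_{i'}$. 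This yields
\begin{equation}
a_N\le L^{2-\xi(p)}a_{N-1}+C_L\,\Eg[M_{N-1}^{p-1}]\,\Eg[M_{N-1}],
\end{equation}
where $a_N=\Eg[M_N^p]$; the factor $L^{2-\xi(p)}$ is $L^{-(2+2b^2)p}e^{2b^2p^2\log L}\cdot L^2$. Since $\xi(p)>2$, the contraction factor is less than $1$, and the last term is bounded uniformly in $N$ by the case $p-1\le1$. Iterating gives $\sup_N a_N<\infty$. For $p>2$ (possible when $b^2$ is small), we induct on $\lceil p\rceil$ using the multinomial expansion: all mixed terms involve moments of order at most $p-1$, which are handled by the induction hypothesis, and the diagonal term contracts as before.

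The main obstacle is this last step, namely obtaining a clean superadditivity-type inequality for $(\sum x_i)^p$ whose cross terms involve only strictly lower moments, and checking that these lower moments lie in the admissible range so the induction closes. Everything else is exact Gaussian computation made possible by the hierarchical structure.
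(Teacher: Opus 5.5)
Your proposal is correct and follows the paper's proof. Both split the field on $B$ into an independent constant Gaussian of variance $(j-k)\log L$ plus a rescaled field with covariance $C_{-N-k,0}$; the paper then simply cites Kahane--Peyri\`ere \cite[Theorem~2]{MR431355} for $\sup_N \Eg_{C_{-N,0}}[M^\pm([0,1]^2)^p]<\infty$, whereas you reprove this by the cascade recursion, which is essentially their argument. The step you flag for non-integer $p>2$ closes easily: write $(\sum_i x_i)^p=\sum_i x_i(x_i+S_i)^{p-1}$ with $S_i=\sum_{i'\neq i}x_{i'}$ and use $(x+s)^{q}\le x^{q}+C_q(x^{q-1}s+s^{q})$ for $q=p-1\geq 1$, so that by independence the cross terms involve only moments of orders $1$ and $p-1$, and the induction on $\lceil p\rceil$ goes through with contraction factor $L^{2-\xi(p)}<1$.
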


This lemma is essentially proved in \cite[Proposition~3.7]{MR2642887}.
In their paper, $\lambda = 2b$, $R=L^{j}$, $d=2$, and $\xi(p)$ is denoted $\zeta(p)$.
That paper however is for the Euclidean rather than the hierarchical setting,
and the simpler hierarchical argument can in fact be found in slightly different notation in \cite{MR431355,MR3497718}.
In all of these references, we note the important notational difference that the Wick ordering is with respect to scale $R=L^j$
rather than unit scale.
Because of these differences, we provide a proof.
  
\begin{proof} 
By translation invariance, we may and will assume that $B=[0,L^k)^2$. Under $C_j$, we can write, using the hierarchical structure, that 
\begin{equation}
(\varphi(x))_{x\in B}\stackrel{(d)}{=}(G_{j,k}+\psi_j(x L^{-k}))_{x\in B}
\end{equation}
where $\psi$ is a hierarchical GFF with covariance
$C_{-N-k,0}$  and $G_{j,k}$ is an independent centered Gaussian of variance $ (j-k)\log L $. 
It follows that 
\begin{align}
\Eg_{C_{-N,j}}[M^\pm(B)^p]
&= \E[e^{\pm 2b G_{j,k}p} ]L^{(2+2b^2)pk} \Eg_{C_{-N-k,0}} [M^{\pm}([0,1]^2)^p]
\nnb
&= L^{2b^2p^2(j-k)} L^{(2+2b^2)pk} \Eg_{C_{-N-k,0}} [M^{\pm}([0,1]^2)^p]
\nnb
&= L^{(k-j)\xi(p)} L^{(2+2b^2)pj} \Eg_{C_{-N-k,0}} [M^{\pm}([0,1]^2)^p]
.
\end{align}
The conclusion follows from the fact that, for $p\in (0,p_*(b))$, $\Eg_{C_{-N,0}} [M^{\pm}([0,1]^2)^p]$ is bounded uniformly in $N$, by \cite[Theorem 2]{MR431355}.
\end{proof}

\begin{lemma} \label{lem:H0-L1-first}
  For $p \in (1,p_*(b))$,
  \begin{equation}
    L^{2j} \PP_{-N-1,j}\qa{ (1_j, H_{-N-1} 1_j)}(0) \geq \mu L^{(2+2b^2)j} - O(\mu L^{(2+2b^2)j})^p.
  \end{equation}
\end{lemma}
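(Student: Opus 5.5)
We follow the proof of the first-order term in Proposition~\ref{prop:H0}, but replace the Jensen bound \eqref{e:Vj-Jensen-L2}, which needed $b^2<1/2$, by a GMC moment estimate. Since $1_j$ is the $L^2$-normalized indicator of $B$, we have $(1_j,H_{-N-1}1_j)=L^{-2j}\int_B H_{xx}\,dx$. Hence it suffices to show, for each $x\in B$ (or on average over $x\in B$),
\begin{equation*}
  \PP_{-N-1,j}[H_{xx}](0)\geq \mu_j\bigl(1-O(\mu L^{(2+2b^2)j})^{p-1}\bigr).
\end{equation*}
Multiplying by $L^{2j}\cdot L^{-2j}|B| = L^{2j}$ then gives the claim, since $\mu_j L^{2j}=\mu L^{(2+2b^2)j}$.

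By the Girsanov computation already carried out in the proof of Proposition~\ref{prop:H0},
\begin{equation*}
  \PP_{-N-1,j}[H_{xx}](0)\geq \mu_j \min_\pm \frac{\Eg_{C_j}[e^{-V_{-N-1}(\zeta\pm 2bC_j(x,\cdot))}]}{\Eg_{C_j}[e^{-V_{-N-1}(\zeta)}]}.
\end{equation*}
The denominator is at most $1$ because $V_{-N-1}\ge0$. For the numerator we use $e^{-u}\geq 1-u^{r}$ for $u\geq0$ and $r\in(0,1]$, which gives
\begin{equation*}
  \Eg_{C_j}\bigl[e^{-V_{-N-1}(\zeta\pm2bC_j(x,\cdot))}\bigr]\geq 1-\Eg_{C_j}\bigl[V_{-N-1}(B,\zeta\pm 2bC_j(x,\cdot))^{r}\bigr],
\end{equation*}
with $r=p-1\in(0,1)$. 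This is the point where we avoid first moments: with the shift, the first moment of $V$ is $\mu_j\int_B\cosh(4b^2C_j(x,y))\,dy$, which diverges when $b^2\ge1/2$.

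It remains to bound $\Eg_{C_j}[V_{-N-1}(B,\zeta+ 2bC_j(x,\cdot))^{r}]$ by $O(\mu L^{(2+2b^2)j})^{r}$, uniformly in $N$ and $x$. The shifted integrand is $\mu_{-N-1}e^{\pm 2b\zeta(y)}e^{4b^2 C_j(x,y)}$ for the $\pm$ parts; the part with the opposite sign carries $e^{-4b^2C_j}\le1$ and is handled directly by Lemma~\ref{lem:GMC-Lp}. For the dangerous part, decompose $B$ into the dyadic annuli $A_k=\{y: d_H(x,y)=L^k\}$ for $k\le j$. On $A_k$ we have $e^{4b^2C_j(x,y)}\approx L^{4b^2(j-k)}$, and $A_k$ is contained in the $k$-block containing $x$. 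Subadditivity of $u\mapsto u^r$ together with Lemma~\ref{lem:GMC-Lp} at exponent $r$ then yields
\begin{equation*}
  \Eg\bigl[V(\cdots)^r\bigr]\lesssim \mu^r\sum_{k\le j} L^{4b^2 r(j-k)}L^{(2+2b^2)rj}L^{(k-j)\xi(r)}.
\end{equation*}
The sum converges if $\xi(r)>4b^2r$, that is, $2+2b^2-2b^2r>4b^2$, that is, $r<(1-b^2)/b^2$. This condition is satisfiable with $r>0$ exactly because $b^2<1$. With such an $r$ we obtain the bound $(\mu L^{(2+2b^2)j})^{r}$, and hence the claim with $p=1+r$.

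The main obstacle is the compatibility of exponents. We need $p-1< (1-b^2)/b^2$, and the stated range $p<p_*(b)$ is presumably enough or can be further restricted; note that $(1-b^2)/b^2\geq$ the relevant bound for $b^2<1$. If the uniform-in-$x$ estimate fails near the diagonal, a fallback is to average over $x\in B$ before applying the moment bound.
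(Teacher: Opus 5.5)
Your argument is correct, and it takes a genuinely different route from the paper.

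\textbf{The paper's route.} It does not work pointwise in $x$. With $X=\frac12(M^+(B)+M^-(B))$ for the $j$-block $B$, it writes the left-hand side as $\Eg_{C_j}[\mu X e^{-\frac{\mu}{4b^2}X}]/\Eg_{C_j}[e^{-\frac{\mu}{4b^2}X}]$. It then drops the denominator, which is at most $1$, and applies the scalar inequality $Ye^{-Y}\geq Y-C_pY^p$. It concludes with $\Eg_{C_j}[X]=L^{(2+2b^2)j}$ and Lemma~\ref{lem:GMC-Lp} at the single scale $k=j$.

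\textbf{Your route.} You first apply Girsanov. This trades the $p$-th moment of the block chaos for the $(p-1)$-th moment of the chaos rooted at $x$, namely $\int_B e^{4b^2C_j(x,y)}\epsilon^{2b^2}e^{2b\zeta(y)}\,dy$. Controlling that quantity requires your annulus decomposition and Lemma~\ref{lem:GMC-Lp} at every scale $k\leq j$. The two routes are dual under Girsanov: averaging your rooted $(p-1)$-th moment over $x\in B$ gives exactly the paper's $p$-th moment. That is why the same threshold appears. Indeed $\xi(p)-2=-2(b^2p-1)(p-1)$, so $p_*(b)=1/b^2$, and your convergence condition $r<(1-b^2)/b^2$ with $r=p-1$ is exactly $p<p_*(b)$.

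\textbf{Unneeded hedges.} No further restriction on $p$ is needed, so you can drop the hedge at the end. Your annulus bound is also already uniform in $x$: the innermost $-N$-block is just the $k=-N$ term of a convergent geometric series. So the averaging fallback is unnecessary too.

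\textbf{Trade-offs.} Your route gives a pointwise bound $\PP_{-N-1,j}[H_{xx}](0)\geq \mu_j\bigl(1-O(\mu L^{(2+2b^2)j})^{p-1}\bigr)$ for every $x\in B$. This parallels the proof of Proposition~\ref{prop:H0}, with the Jensen step replaced by a fractional moment. The paper's route is shorter and needs only the single-scale moment bound.

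\textbf{Common implicit restriction.} Both arguments implicitly use $p\leq 2$. Your inequality $e^{-u}\geq 1-u^r$ and the subadditivity step need $r\leq 1$, and the paper's scalar inequality fails for $p>2$. This holds automatically here, since $p_*(b)=1/b^2\leq 2$ for $b^2\geq 1/2$.
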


\begin{proof}
  Let $X=\frac12 (M^+(B)+M^-(B))$ where $B$ is a block of scale $j$. Then,
  recalling that $1_j$ is the normalized indicator function on $B$,
  \begin{equation} \label{e:PNj-lb}
    L^{2j}\PP_{-N-1,j} \qb{ (1_j,H_{-N-1} 1_j)} (0)
    = \frac{\Eg_{C_j}[\mu X e^{-\frac{\mu}{4b^2}X}]}{\Eg_{C_j}[e^{-\frac{\mu}{4b^2}X}]}
    \geq \Eg_{C_j}[\mu Xe^{-\frac{\mu}{4b^2}X}]
    .
  \end{equation}
  Using that $Xe^{-X} \geq X-C_pX^p$ for $X \geq 0$ we get
  \begin{equation}
    L^{2j}\PP_{-N-1,j}\qb{ (1_j,H_{-N-1} 1_j)}(0) \geq \mu \Eg_{C_j}[X] - C_p\mu^p \Eg_{C_j}[X^p] = \mu L^{(2+2b^2)j} - C_p\mu^p \Eg_{C_j}[X^p].
  \end{equation}
  For $p \in (0,p_*(b))$ the second term on the right-hand side is bounded as needed by Lemma~\ref{lem:GMC-Lp}.
\end{proof}

\begin{lemma} \label{lem:H0-L1-second}
For $s \in (0,p_*(b))$,
\begin{equation} \label{e:H0-L1-second-1}
  \PP_{-N-1,j}\qa {L^{2sj} (1_j, H_{-N-1} 1_j)^{s}}(0) \lesssim (L^{(2+2b^2)j}\mu)^s
\end{equation}
and, if additionally $s>1$, then
\begin{equation}\label{e:H0-L1-second-2}
  \PP_{-N-1,j}\qa{L^{2sj}  (1_j, H_{-N-1}C_j H_{-N-1} 1_j)^{s/2} }(0) \lesssim (L^{(2+2b^2)j}\mu)^{s}
  .
\end{equation}
\end{lemma}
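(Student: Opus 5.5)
The strategy is to remove the measure $\PP_{-N-1,j}(0)$ in favour of the Gaussian expectation $\Eg_{C_j}$, and then reduce both bounds to the GMC moment estimate of Lemma~\ref{lem:GMC-Lp}.

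\textbf{Removing the tilt.} At $\varphi=0$ the tilt is $e^{-V_{-N-1}(B,\zeta)}\le 1$. The normalisation satisfies
\[
  \Eg_{C_j}\bigl[e^{-V_{-N-1}(B,\zeta)}\bigr]=e^{-V_j(B,0)}\ge \exp\bigl(-O(\mu L^{(2+2b^2)j})\bigr),
\]
by the Jensen bound \eqref{e:VB-Jensen} with $\varphi=0$. In the relevant regime $\mu L^{(2+2b^2)j}\lesssim 1$ this is bounded below by a constant. (For larger $\mu L^{(2+2b^2)j}$ the claimed bounds are only needed as stated, and one can use $e^{-\frac{\mu}{4b^2}X}$ as in \eqref{e:PNj-lb}.) Consequently, for any nonnegative functional $G$,
\[
  \PP_{-N-1,j}[G](0)\lesssim \Eg_{C_j}[G].
\]

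\textbf{First bound \eqref{e:H0-L1-second-1}.} Since $1_j=\ell^{-1}\1_B$ with $\ell=L^j$,
\[
  L^{2j}(1_j,H_{-N-1}1_j)=\mu\int_B\epsilon^{2b^2}\cosh(2b\zeta)\,dx=\mu M(B).
\]
So we need $\Eg_{C_j}[M(B)^s]\lesssim L^{(2+2b^2)sj}$. Use $(\frac{a+b}{2})^s\lesssim a^s+b^s$ to reduce to $M^\pm(B)$, then apply Lemma~\ref{lem:GMC-Lp} with $k=j$. This holds for every $s\in(0,p_*(b))$.

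\textbf{Second bound \eqref{e:H0-L1-second-2}.} Write $C_j=\sum_{k=-N}^j\gamma_kQ_k$, so that
\[
  (1_j,H C_j H 1_j)=\ell^{-2}\sum_{k\le j}\gamma_k\sum_{B'\in\cB_k(B)} L^{-2k}\bigl(\tfrac{\mu}{\epsilon^{2b^2}}\cdot\epsilon^{2b^2}\ldots\bigr)^2 .
\]
More precisely,
\[
  L^{4j}(1_j,HC_jH1_j)=\mu^2\ell^2\sum_{k\le j}(\log L)\,L^{2k}\sum_{B'\in\cB_k(B)}\bigl(M(B')/|B'|\bigr)^2,
\]
which equals $\mu^2\log L\,\ell^2\sum_k L^{-2k}\sum_{B'}M(B')^2$.

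Raise this to the power $s/2$. Since $s/2<1$ (note $p_*<2$ in this regime, as $\xi(2)=4-4b^2\le2$), subadditivity of $t\mapsto t^{s/2}$ gives
\[
  \Eg\bigl[(\ldots)^{s/2}\bigr]\lesssim\mu^s\ell^{s}\sum_{k\le j}L^{-sk}\sum_{B'\in\cB_k(B)}\Eg_{C_j}\bigl[M(B')^s\bigr].
\]
By Lemma~\ref{lem:GMC-Lp}, each term is at most $L^{(2+2b^2)sj}L^{(k-j)\xi(s)}$, and there are $L^{2(j-k)}$ blocks $B'$. Collecting powers, the $k$-th term is
\[
  \mu^s L^{(2+2b^2)sj}\,L^{(j-k)(s+2-\xi(s))}.
\]
The tail sum over $k\to-\infty$ converges uniformly in $N$ exactly when $\xi(s)>s+2$, i.e.\ $(1+2b^2)s-2b^2s^2>2$. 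This fails near $s=1$ (it reads $1>2$), so the crude subadditivity step is \textbf{the main obstacle}: it loses too much.

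\textbf{Better route for the second bound.} Keep the square inside and bound $M(B')^2\le M(B')\cdot\sup_{B'}$ more cleverly. The natural first attempt is Minkowski in $L^{s/2}$, with exponent $s/2$ replaced by a proper norm, or equivalently Hölder. Note that
\[
  \sum_{B'}M(B')^2\le M(B)\max_{B'\in\cB_k(B)}M(B'),
\]
so by Cauchy--Schwarz in probability the contribution of scale $k$ is
\[
  \lesssim \Eg\bigl[M(B)^{s}\bigr]^{1/2}\,\Eg\Bigl[\max_{B'}M(B')^{s}\Bigr]^{1/2}.
\]
Bounding the max by the sum gives
\[
  \Eg\Bigl[\max_{B'} M(B')^s\Bigr]\lesssim L^{2(j-k)}L^{(2+2b^2)sj}L^{(k-j)\xi(s)},
\]
and the $k$-term becomes $L^{(j-k)(s+1-\xi(s)/2)}$ times the right scaling. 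Summability then needs $\xi(s)>2s+2$, which again fails at $s=1$ (it gives $2>4$).

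Given this, I would reorganise: apply Hölder with exponent $r\in(1,p_*/s)$ on the maximum term and sum over $k$ using the strict gain $\xi(sr)-2>0$ for $sr<p_*$. The plan is to choose $s$ slightly above $1$ (this is what is needed in \eqref{e:He-L1-BLp}, with $s=2q'(p-1)$) so that all exponents stay below $p_*$ and the geometric series in $j-k$ converges. Verifying that the exponents can be arranged to give a strictly negative power of $L^{j-k}$ is the crux of the second bound.
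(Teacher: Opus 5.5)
You handle the tilt and \eqref{e:H0-L1-second-1} correctly, exactly as the paper does. The gap is in \eqref{e:H0-L1-second-2}. Your ``more precisely'' identity is off by a factor $|B'|^{-1}=L^{-2k}$ in each scale-$k$ term, and this spurious factor is what makes the subadditivity route look like it fails.

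For $g=H_{-N-1}1_j$ one has
\[
  (g,Q_kg)=\sum_{B'\in\cB_k}|B'|\,(\text{average of } g \text{ on } B')^2=\sum_{B'}|B'|^{-1}\Bigl(\int_{B'}g\Bigr)^2 .
\]
You dropped the weight $|B'|$ in front of the squared average. The prefactor $\gamma_k=(\log L)|B'|$ then cancels the remaining $|B'|^{-1}$; equivalently, $\gamma_kQ_k(x,y)=(\log L)\1_{j(x,y)\le k}$. Hence
\[
  L^{2j}\,(1_j,H_{-N-1}C_jH_{-N-1}1_j)=\mu^2(\log L)\sum_{k=-N}^{j}\sum_{B'\in\cB_k(B)}M(B')^2 ,
\]
with no factor $L^{-2k}$.

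With this identity, the crude step you discarded is exactly the right one. Move the power $s/2<1$ inside by subadditivity, and apply Lemma~\ref{lem:GMC-Lp} to each of the $L^{2(j-k)}$ blocks of scale $k$. The $k$-th term is then $\mu^sL^{(2+2b^2)sj}L^{-(j-k)(\xi(s)-2)}$. Since $\xi(s)-2=2(s-1)(1-b^2s)>0$ for $s\in(1,p_*(b))$ (indeed $p_*(b)=1/b^2$), the geometric series is bounded uniformly in $N$. This is the paper's proof. As submitted, your argument does not establish \eqref{e:H0-L1-second-2}. It ends with a H\"older scheme whose key step is left unverified, and that scheme targets a divergence that does not exist.

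Once you redo the bookkeeping, you will also see that the prefactor $L^{2sj}$ in \eqref{e:H0-L1-second-2}, read literally, leaves an extra factor $L^{sj}$. The estimate that actually holds is for $\bigl(L^{2j}(1_j,H_{-N-1}C_jH_{-N-1}1_j)\bigr)^{s/2}$. The paper's proof drops the same $L^{sj}$ in its first display. This is compensated by the $L^{2pj}$ in \eqref{e:He-L1-BLp}, which from the Brascamp--Lieb and interpolation step should read $L^{2j}$. Proposition~\ref{prop:H0-L1} is therefore unaffected.
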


\begin{proof}
  We recall that $e^{+V_j(0)} = 1+O(L^{(2+2b^2)j}\mu)$ and that $V_{-N-1}(\zeta) \geq 0$. Therefore
  for any nonnegative function $F \geq 0$ we have the following simple upper bound
  in terms of the Gaussian expectation:
  \begin{align}
    \PP_{-N-1,j}[F](0)
    = e^{V_j(0)}\Eg_{C_j}\qB{F(\zeta) e^{-V_{-N-1}(\zeta)}} \leq
    (1+O(\mu L^{(2+2b^2)j})) \Eg_{C_j}\q{F}.
  \end{align}
  A slightly sharper estimate without the $1+O(\mu L^{(2+2b^2)j})$ prefactor follows for quasi-concave $F$ (as in our application)
  from the Gaussian correlation inequality, but it is not necessary for us.

  The first estimate \eqref{e:H0-L1-second-1} follows directly from the $L^p$ bound on the GMC stated in Lemma~\ref{lem:GMC-Lp}.
  For the second estimate \eqref{e:H0-L1-second-2}, we start with
  \begin{align}
    &L^{2js} \Eg_{C_j}\qa{(1_j, H_{-N-1}(\zeta) C_j H_{-N-1}(\zeta) 1_j)^{s/2}}
      \nnb
    &\lesssim
    \mu^s\Eg_{C_j}\qa{\pa{\int_{B\times B} C_j(x,y) \epsilon^{4b^2}\cosh(2b\zeta(x))\cosh(2b\zeta(y)) \, dx\,dy}^{s/2}}
    ,
  \end{align}
  Decomposing $C_j(x,y) = \sum_{k=-N}^j \gamma_k Q_k(x,y)=  (\log L) \sum_{k=-N}^j \1_{j(x,y) \leq k}$ the right-hand of the last display side equals
  \begin{align}
   (\log L)^{s/2} \mu^s\Eg_{C_j}&\qa{\pa{\sum_{k={-N}}^j \int_{B\times B} L^{2k}Q_k(x,y) \epsilon^{4b^2} \cosh(2b\zeta(x))\cosh(2b\zeta(y)) \,dx\,dy}^{s/2}}
       \nnb
	  &=  (\log L)^{s/2} \mu^s\Eg_{C_j}\qa{\pa{\sum_{k={-N}}^j \sum_{b\in \cB_k(B)}\int_{b\times b} \epsilon^{4b^2}  \cosh(2b\zeta(x))\cosh(2b\zeta(y)) \,dx\,dy}^{s/2}}
      \nnb
     &= (\log L)^{s/2} \mu^s\Eg_{C_j}\qa{\pa{\sum_{k={-N}}^j \sum_{b\in \cB_k(B)}\pa{\int_{b} \epsilon^{2b^2}\cosh(2b\zeta(x)) \, dx}^2}^{s/2}}
      \nnb
    &= (\log L)^{s/2} \mu^s\Eg_{C_j}\qa{\pa{\sum_{k={-N}}^j \sum_{b\in \cB_k(B)}M(b)^2}^{s/2}},
  \end{align}
  where the symmetric GMC $M$ is defined in \eqref{e:M-def}.
  Using subadditivity of $f(x)=x^{s/2}$ to move the power $s/2<1$ into the sum, this can be bounded by
  \begin{equation}
    (\log L)^{s/2} \mu^s\sum_{k={-N}}^j \sum_{b\in \cB_k(B)} \Eg_{C_j}\qa{M(b)^s}
    =
    (\log L)^{s/2} \mu^s\sum_{k={-N}}^j L^{2(j-k)} \max_{b\in \cB_k}\Eg_{C_j}\qa{M(b)^s}.
  \end{equation}
  Applying \eqref{e:GMC-moment} to bound the GMC moment $\Eg_{C_j}\qa{M(b)^s}$ where $|b|=L^{2k}$, the last display is equal to
  \begin{align}
    &(\log L)^{s/2} \mu^s L^{(2+2b^2)sj}\sum_{k={-N}}^j L^{2(j-k)} L^{(k-j)\xi(s)}
    \nnb
    &=  (\log L)^{s/2} \mu^s L^{(2+2b^2)sj} \sum_{k={-N}}^j  L^{-(\xi(s)-2)(j-k)}
    \lesssim \mu^s L^{(2+2b^2)sj}
  \end{align}
  where the last inequality follows from $\xi(s) > 2$.
\end{proof}

\begin{proof}[Proof of Proposition~\ref{prop:H0-L1}]
We start from \eqref{e:He-L1-BLp} and need the exponents to satisfy
\begin{equation} \label{e:pq-cond}
  q(2-p) < p_*(b), \qquad 2q(p-1)/(q-1) = 2q'(p-1) < p_*(b).
\end{equation}
These conditions are equivalent to
\begin{equation}
  2-\frac{p_*(b)}{q} < p < 1+\frac{q-1}{2} \frac{p_*(b)}{q}
\end{equation}
and there is a valid choice of $p$ if and only if
\begin{equation}
  1< q < \frac{p_*(b)}{2-p_*(b)}.
\end{equation}
Using the $q$ chosen in this way the condition \eqref{e:pq-cond}
is satisfied for any $p \in (2-\frac{p_*(b)}{q}, 1+\frac{q-1}{2}\frac{p_*(b)}{q}) \cap (1,2)$.
Thus with $p$ and $q$ chosen we can apply
Lemma~\ref{lem:H0-L1-first} for the first term in \eqref{e:He-L1-BLp} and
Lemma~\ref{lem:H0-L1-second} for the second term to get
\begin{equation}
  L^{2j}(1_j,H_j1_j) \geq L^{(2+2b^2)j} \mu  - C_p (L^{(2+2b^2)j}\mu)^{p}
\end{equation}
as claimed.
\end{proof}

\subsection{Convexity for large scales: Proof of Corollary~\ref{cor:Hej0}}

\begin{proof}[Proof of Corollary~\ref{cor:Hej0}]
We again start from the Brascamp--Lieb inequality, except that we now use \eqref{e:BL-large} from scale $j-1$ to scale $j$ rather than from scale $-N$ to $j$:
\begin{equation} \label{e:BL-large-bis}
  \gamma_j Q_jH_j(\varphi)Q_j
  \geq
  \E_{\mu_{j-1,j}^\varphi}\qa{\frac{\gamma_jQ_j H_{j-1} Q_j}{1+\gamma_j Q_j H_{j-1}Q_j}}
  .
\end{equation}
The claim then follows by induction in $j$.
Indeed, we assume that $\gamma_{j} Q_j H_j Q_j \geq { \rho_j} Q_j$ for some constants $\rho_j>0$.
Then in particular $\gamma_{j+1} Q_{j+1} H_j Q_{j+1} \geq L^2\rho_j Q_{j+1}$
where we used that $Q_{j+1}Q_j =Q_{j+1}$ and that $\gamma_{j+1} = L^2\gamma_j$.
Therefore 
\begin{equation}
  \gamma_{j+1} Q_{j+1}H_{j+1}(\varphi)Q_{j+1} \geq
  \E_{\mu_{j,j+1}^\varphi}\qa{ \frac{\gamma_{j+1} Q_{j+1} H_j Q_{j+1}}{1+\gamma_{j+1} Q_{j+1} H_j Q_{j+1}}}
  \geq \frac{L^2\rho_j}{1+L^2 \rho_j} Q_{j+1}
  ,
\end{equation}
i.e., the inductive assumption is advanced with
\begin{equation}
  \rho_{j+1} \geq \frac{L^2\rho_j}{1+L^2 \rho_j}.
\end{equation}
The claim follows by induction starting from $j=j_0(b,\mu)$ which is such that $\rho_j \geq c>0$
for a constant $c$ independent of $\mu$ and $N$.
Since $L^2c/(1+L^2c) \geq c$ for $c>0$ small enough, we then obtain $\rho_{j+1} \geq c$.
In fact, asymptotically, $\rho_j \sim 1-L^{-2}$.
\end{proof}

\subsection{Cram\'er--Rao upper bound: Proof of Proposition~\ref{prop:CR}}

\begin{proof}[Proof of Proposition~\ref{prop:CR}]
  The Hessian of the renormalized potential can also be represented in terms of the covariance matrix of the fluctuation measure $\mu_{-N-1,j}^\varphi$
  defined in \eqref{e:Pkj-mukj-def} as
  \begin{equation}
    \He V_j(\varphi) = C_j^{-1} - C_j^{-1} {\rm Cov}(\mu_{-N-1,j}^\varphi) C_j^{-1}.
  \end{equation}
  Again using the definition of $\mu_{-N-1,j}^\varphi$,
  the second term is bounded by the Cram\'er--Rao inequality
  (see, for example, \cite[Appendix~A]{MR4683324}):
  \begin{equation}
    {\rm Cov}(\mu_{-N-1,j}^\varphi) \geq \pB{C_j^{-1} + \E_{\mu_{-N-1,j}^\varphi}[H_{-N-1}(\varphi+\zeta)]}^{-1}.
  \end{equation}
  Since $V_j$ is additive over $j$-blocks, it is sufficient to consider $\varphi$ constant, and
  we do this from now on.
  Using the same notation as in \eqref{e:PNj-lb} (but not necessarily requiring $\varphi=0$),
  in particular that $1_j$ is the normalized indicator function on a $j$-block $B$ and that $X(\zeta) = X_\varphi(\zeta) = \int_B L^{-2b^2(N+1)}\cosh(2b\varphi+2b\zeta)$,
  \begin{equation}
    \E_{\mu_{-N-1,j}^\varphi}[H_{-N-1}(\varphi+\zeta)] 
    =
    \PP_{-N-1,j} \qb{ (1_j,H_{-N-1}(\varphi+\zeta) 1_j)} (\varphi)
    = L^{-2j} \frac{\Eg_{C_j}[\mu X e^{-\frac{\mu}{4b^2}X}]}{\Eg_{C_j}[e^{-\frac{\mu}{4b^2}X}]}
    ,
  \end{equation}
  where the first equality follows from the fact that the matrix on the left-hand side is proportional to the 
  identity (for constant $\varphi$) and the second equality uses that $L^{-2j}\mu X_\varphi(\zeta) = (1_j, H_{-N-1}(\varphi+\zeta) 1_j)$ as random variables.
  Using the one-dimensional FKG inequality, which  holds for all real-valued random variables $X$,
  using that the function $x$ is increasing and that $e^{-x}$ is decreasing, thus
  \begin{equation}
    \E_{\mu_{-N-1,j}^\varphi}[H_{-N-1}(\varphi+\zeta)] 
    \leq L^{-2j} \Eg_{C_j}[\mu X]
    = L^{2b^2j}\mu\cosh(2b\varphi),
  \end{equation}
  where the final equality is a Gaussian calculation analogous to that in \eqref{e:VN-expect} and uses that $|B| = L^{2j}$.
  Thus, with $\mu_j = L^{2b^2j} \mu$ for $\varphi$ constant, we have 
  \begin{equation}
    {\rm Cov}(\mu_{-N-1,j}^\varphi) \geq \pB{C_j^{-1} + \mu_j \cosh(2b\varphi) \id}^{-1}
  \end{equation}
  and
  \begin{equation}
    C_j^{1/2}\He V_j(\varphi)C_j^{1/2}
    \leq 1 - (1+\mu_j \cosh(2b\varphi) C_j)^{-1}
    = \frac{\mu_j \cosh(2b\varphi) C_j}{1+\mu_j \cosh(2b\varphi) C_j}.
  \end{equation}
Projecting with $Q_j$ and using that with $\gamma_{-N,j} = \sum_{k\leq j}\gamma_k$ and
thus $\sum_{k\leq j}\gamma_k^{1/2} \geq \gamma_{-N,j}^{1/2}$,
\begin{align}
  Q_jC_j &= (\sum_{k\leq j} \gamma_k) Q_j = \gamma_{-N,j} Q_j
  \\
  Q_jC_j^{1/2} &= (\sum_{k\leq j} \gamma_k^{1/2}) Q_j \geq \gamma_{-N,j}^{1/2} Q_j
\end{align}
we obtain
\begin{equation}
  \gamma_{-N,j} Q_j \He V_j(\varphi) Q_j \leq \frac{\mu_j \cosh(2b\varphi) \gamma_{-N,j}}{1+ \mu_j \cosh(2b\varphi)\gamma_{-N,j}} Q_j.
\end{equation}  
Since $\gamma_{-N,j} \geq \gamma_j = (\log L) L^{2j}$ and $\mu_j = L^{2b^2 j} \mu$ the claim follows.
\end{proof}

\section{Proof of main results}

To prove the main results, we first prove the log-Sobolev inequality and mass gap
for the regularized measure \eqref{e:nu-NM}, uniformly in the ultraviolet and infrared cutoffs.
Either implies the needed tightness in order to take the limit along subsequences.
The existence of the actual limit is a result of monotonicity established subsequently.

\subsection{Log-Sobolev inequality}


To prove (the at first regularized version of) the log-Sobolev inequality, we apply the multiscale
Bakry--\'Emery criterion of \cite{MR4303014} and more precisely the special instance for the
hierarchical setting discussed in \cite[Example~2.7]{MR4303014}.
This  application relies on the Hessian lower bounds of the renormalized potential established  in
Propositions~\ref{prop:H0}--\ref{prop:H0-L1} and Corollary~\ref{cor:Hej0}. To be precise, it
uses a slight generalization in which the covariances are continuously interpolated. This generalization
follows from exactly the same proofs (and alternatively can be derived from the stated estimates
by an argument completely analogous to that in the proof of Corollary~\ref{cor:Hej0}).

\begin{proof}[Proof of regularized version of \eqref{e:thm-LS}]
Define
\begin{equation}
  \dot C_t = \sum_j 1_{(j-1,j]}(t) L^{2j} Q_j, \qquad C_t = \int_{-N-1}^t \dot C_s \, ds,
\end{equation}
so that for $t\in \Z$ (always with $t \geq -N$), the definition of $C_t$ coincides with \eqref{e:C-decomp}.
For $t\in \R$, the renormalized potential $V_t$ is defined exactly as in \eqref{e:Vj}.
The estimates from Section~\ref{sec:renpot} are straightforward to extend from $j\in\Z$ to $t\in \R$.
In particular, the extensions of the estimates of Propositions~\ref{prop:H0}--\ref{prop:H0-L1} and Corollary~\ref{cor:Hej0} show that,
uniformly in $\varphi \in X_t = X_{\lceil t \rceil}$,
\begin{equation}
  \dot C_t^{1/2} \He V_t(\varphi) \dot C_t^{1/2} \gtrsim \min\{\mu L^{(2+2b^2)t},1\}.
\end{equation}
By \cite[Example~2.7]{MR4303014}, the log-Sobolev constant $\gamma$ of the measure
$\nu^{\hShG(b,\mu|-N,M)}$ is bounded by
\begin{align}
  \frac{1}{\gamma}
  &\lesssim \int_{-N-1}^{\infty} \exp\pa{-c\int_{-N-1}^t \min\{\mu L^{s(2+2b^2)},1\}  \,ds} L^{2t}\,dt\nnb
  &\lesssim   \int_{-N-1}^{\infty} \exp\pa{-c\mu L^{(t\wedge t_0)(2+2b^2)} -c(t-t_0)_+ } L^{2t} \, dt
   \approx L^{2j_0} \approx \mu^{-1/(1+b^2)},
\end{align}
where we used that $t_0=j_0$ is defined so that $L^{(2+2b^2)j_0}\mu \approx 1$. In summary,
for any cylindrical test function with compact support,
\begin{equation}
  \ent_{\nu_{\hShG(b,\mu|-N,M)}} (F) \lesssim \mu^{-1/(1+b^2)} \E_{\nu^{\hShG(b,\mu|-N,M)}}\qa{\|\nabla \sqrt{F}\|_{L^2}^2}.
\end{equation}
This is the regularized version of \eqref{e:thm-LS} formulated on the lattice. 
Upon passing to the continuum form of the inequality, the inequality is preserved if lattice functions are replaced with their piecwise-constant analogues in the continuum.
This gives the statement for all hierarchical cylinder functions uniformly in the cutoffs $-N,M$. The result follows because $\phi_{-N,M}(f)$
converges in distribution, so if $F$ is regulated by replacing it with $F+\eps$, the inequality passes to the limit, after which the mollifier may be removed.
\end{proof}

\subsection{Massive correlation decay}

To explain the idea of the argument, first consider the sinh-Gordon model on the unit lattice $\Lambda \subset \Z^d$:
\begin{equation}
  \avg{F} \propto \int_{\R^\Lambda} e^{\frac12 (\varphi,\Delta\varphi) - \frac{\mu}{4b^2}\sum_{x\in\Lambda} \cosh(2b \varphi_x)} F(\varphi) \, d\varphi,
\end{equation}
where $\Delta$ is the lattice Laplacian on $\Lambda$.
Then the massive decay is straightforward from the strict convexity of $\cosh$.
The simplest proof uses the Brydges--Fr\"ohlich--Spencer (BFS) random walk representation \cite{MR648362}.
Indeed, using this representation the two-point function is given by
\begin{align}
  \avg{\varphi(x)\varphi(y)}
  &= \int_0^\infty \E_{x}\qa{ 1_{X_T=y} \frac{Z(L_T)}{Z(0)}} \, dT
\end{align}
where $X$ is the random walk with generator $\Delta$, $L_T = (\int_0^T \1_{X_t=x})_{x\in\Lambda}$ its local time up to time $T$,
and for $\ell: \Lambda \to [0,\infty)$,
\begin{align}
  Z(\ell) = \int e^{\frac12 (\varphi,\Delta \varphi)} e^{-\frac{\mu}{4b^2} \sum_x \cosh(2b\sqrt{\varphi_x^2+2\ell_x})} \, d\varphi.
\end{align}
Using that $\cosh(\sqrt{t}) = \frac12 t + W(t)$ with $W(t)$ increasing, i.e., $\cosh(\sqrt{t+s}) \geq \cosh(\sqrt{t}) +  \frac12 s$
for $t,s\geq 0$,
\begin{equation}
  \frac{Z(\ell)}{Z(0)} \leq e^{-\mu \sum_x \ell_x} \quad\text{and}\quad
  \sum_x L_{T,x} = T,
\end{equation}
and we conclude the pointwise bound
\begin{equation} \label{e:BFS-bound-lattice}
  0 \leq
  \avg{\varphi(x)\varphi(y)}
  \leq \int_0^\infty \E_{x}\qa{ 1_{X_T=y} e^{-\mu T}}\, dT
  = (-\Delta+\mu)^{-1}(x,y).
\end{equation}

The only feature that is used is that $\Delta$ is the generator of a random walk
and that $\cosh(\varphi)-\frac12 \varphi^2$ is an increasing function of $\frac12 \varphi^2$.
In the hierarchical case, the renormalized measure, defined by
\begin{equation}
\label{eq:nuj}
\E_{\nu_{j+1}}[F] \propto \Eg_{C_{j+1,M}} \Big[F(\varphi) e^{-V_j(\varphi)} \Big],
\end{equation}
where the renormalized measure $V_j$ defined in \eqref{e:Vj} has the same structure as the
original measure: It has a local single site potential that is even and convex
and the renormalized Laplacian remains ferromagnetic as verified in Lemma~\ref{lem:hiergen} in Appendix~\ref{app:hier}.
In particular, this measure admits a random walk representation and satisfies the the FKG inequality.
It is related to the full measure by the following identity, which holds for any $j \geq -N$:
\begin{equation}
  \E_{\nu}[F] = \E_{\nu_{j+1}}\qa{\E_{\mu_j^\varphi}[F]},
\end{equation}
where $\varphi$ is the field distributed according to $\nu$.

The proof of the two-point function decay of the hierarchical continuum sinh-Gordon model now follows in two steps:
\begin{enumerate}
\item
The two-point function of the full measure is dominated by that of the renormalized measure.
\item
The two-point function of the renormalized measure satisfies the massive correlation decay.
\end{enumerate}
The first step is stated as the following lemma.
For $x\in \R^2$, let $B_x\in \cB_j$ be the $j$-block containing $x$.
The renormalized field $\varphi\sim \nu_j$ is constant on the blocks in $\cB_j$ and we will
also write $\varphi(x) = \varphi_{B_x}$ in this situation.
Also recall that the full field $\varphi \sim \nu$ is constant on the blocks of the ultraviolet scale $-N$.
The next lemma shows that, as a consequence of the FKG inequality,
the two-point function of the full measure is bounded by the
two-point function of the renormalized measure.

\begin{lemma} \label{lem:twopoint-renorm}
For any $x,y \in \R^2$ with $d_H(x,y) > L^{j}$ for some $j>-N$,
\begin{equation}
  \label{eq:hshg-2pt}
  \E_{\nu}[\varphi(x) \varphi(y)]
  \leq
  \E_{\nu_j}[\varphi(x) \varphi(y)],
\end{equation}
where $\nu$ denotes the regularized sinh-Gordon measure $\nu^{\hShG(b,\mu|-N,M)}$
and $\nu_j$ is defined in \eqref{eq:nuj}.
\end{lemma}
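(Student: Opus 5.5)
We use the decomposition $\E_{\nu}[F] = \E_{\nu_{j}}\qa{\E_{\mu^{\varphi}_{j-1}}[F]}$, the identity stated just before the lemma with $j$ replaced by $j-1$. We then apply the law of total covariance and show that the fluctuation part does not contribute.

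Write the full field as $\varphi+\zeta$, where $\varphi\sim\nu_j$ is constant on $j$-blocks and $\zeta\sim\mu^\varphi_{j-1}$ is the fluctuation field with covariance $C_{j-1}$ tilted by $e^{-V_{-N-1}(\varphi+\zeta)}$. Since $d_H(x,y)>L^j$, the points $x$ and $y$ lie in different $j$-blocks $B_x\neq B_y$. The covariance $C_{j-1}=C_{-N,j-1}$ is block diagonal over $(j-1)$-blocks, hence also over $j$-blocks. The bare potential is a sum over sites. Therefore, for fixed $\varphi$, the measure $\mu^\varphi_{j-1}$ factorizes over $j$-blocks, and $\zeta(x)$ and $\zeta(y)$ are independent under it. This gives
\begin{equation*}
\E_\nu[(\varphi+\zeta)(x)(\varphi+\zeta)(y)] = \E_{\nu_j}\qa{u(\varphi_{B_x})\,u(\varphi_{B_y})},
\end{equation*}
where $u(\varphi_B) = \varphi_B + \E_{\mu^\varphi_{j-1}}[\zeta(x)]$ is the conditional mean. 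It is a function of the single block variable $\varphi_B$ only, by the factorization and the hierarchical structure.

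Next we need properties of $u$. Since $V_{-N-1}$ is even, $u$ is odd. We also need $0\le u(t)\le t$ for $t\ge 0$. For the upper bound, observe that the tilt $e^{-V_{-N-1}(t+\zeta)}$, with $t\ge 0$ and $\cosh$ increasing in $|t+\zeta|$, pushes $\zeta$ towards negative values. Concretely, $t+\zeta\mapsto$ weight is symmetric decreasing in $|t+\zeta|$, and the Gaussian is symmetric in $\zeta$, so $\E[\zeta]\le 0$ by the one-dimensional FKG/rearrangement argument. For $u\ge0$ we use a symmetric comparison: by Girsanov, $u(t)$ is the mean of $t+\zeta$ under the Gaussian centered at $t$ reweighted by an even log-concave weight. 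By GHS/Griffiths monotonicity (as in Proposition~\ref{prop:GHS}), this mean is nonnegative and increasing in $t$. Equivalently, $u(t)=\gamma\,\partial_t(-V_j$-type log-partition$)$ has the sign of $t$. Thus $u$ is odd, nondecreasing, and satisfies $|u(t)|\le|t|$.

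Finally, we conclude using FKG for $\nu_j$ (from Lemma~\ref{lem:hiergen}: even convex single-site potential with ferromagnetic coupling), together with the $\varphi\mapsto-\varphi$ symmetry. The cleanest route is a Ginibre/GKS-type inequality: for odd increasing $u$ with $|u(t)|\le|t|$, we want
\begin{equation*}
\E_{\nu_j}[u(\varphi_{B_x})u(\varphi_{B_y})]\le\E_{\nu_j}[\varphi_{B_x}\varphi_{B_y}].
\end{equation*}
We would try to write $t=u(t)+w(t)$ with $w$ also odd and nondecreasing. This requires $u'\le1$, which should follow from the Brascamp--Lieb bound $\var(\zeta)\le C_{j-1}$ restricted to the block mean, since $u'=1-\text{Var}/\gamma$-type expression. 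Expanding the product $\varphi_{B_x}\varphi_{B_y}$ then gives cross terms of the form $\E[f(\varphi_{B_x})g(\varphi_{B_y})]$ with $f,g$ odd and increasing. These are nonnegative by the Griffiths-type inequality for even ferromagnetic measures, which follows from FKG combined with symmetry by conditioning on $|\varphi|$ or by duplicating variables. The main obstacle is exactly this last step: establishing positivity of correlations of odd increasing functions, and the bound $0\le u'\le1$. FKG alone gives only $\mathrm{Cov}\ge0$. Oddness makes the means vanish, and this is what upgrades it to $\E[fg]\ge0$.
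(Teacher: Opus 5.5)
Your proposal is correct and is essentially the paper's proof with the index shifted by one. Both arguments use the factorization of the fluctuation measure over $j$-blocks, split the block variable into the conditional mean $u$ and $w=t-u(t)=C_{j-1}\nabla V_{j-1}$ (both odd and nondecreasing), and use FKG for $\nu_j$ together with the $\varphi\mapsto-\varphi$ symmetry to make the cross terms nonnegative; the paper simply telescopes in two steps instead of expanding into four terms.

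One slip: $u'(t)=\var((\psi,1_B))/(\gamma_{-N,j-1}|B|)$ for the full field $\psi$, so it is $w'=1-u'$, not $u'$, that has the form $1-\var/\gamma$. Brascamp--Lieb (equivalently, convexity of $V_{j-1}$, which is what the paper invokes) then gives $w'\geq 0$, i.e.\ $u'\le 1$, and your separate bound $|u(t)|\le|t|$ is not needed.
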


\begin{proof}
It suffices to show the claim with $j$ replaced by $j+1$.
The full (regularized) sinh-Gordon measure $\nu$ decomposes into renormalized measure $\nu_{j+1}$ and fluctuation measure $\mu_j^\varphi$
defined in \eqref{e:Pkj-mukj-def} according to
\begin{equation}
  \E_{\nu}[F(\varphi)]
  = \E_{\nu_{j+1}}\qa{\E_{\mu_j^\varphi}[F(\zeta)]}.
\end{equation} 
By the hierarchical structure,
conditioned on the block-spin field $\varphi \sim \nu_{j+1}$ at scale $j+1$, the fluctuation measure $\mu_j^\varphi$
is a product measure over fields on $j$-blocks.
Then, for $j$-blocks $B_x \neq B_y$,
\begin{equation}
  \E_{\nu}[\varphi_x\varphi_y]
  = \E_{\nu_{j+1}}\qa{\E_{\mu_j^\varphi}[\zeta_x\zeta_y]}
  = \E_{\nu_{j+1}}\qa{\E_{\mu_j^\varphi}[\zeta_x] \E_{\mu_j^\varphi}[\zeta_y]} .
\end{equation}
For the inner expectations, the external field $\varphi$ can be assumed to be constant everywhere
(as it is constant on the block $B_x$ and $\zeta_x$ only depends on $\varphi$ restricted to $B_x$).
For constant external field $\varphi$, the conditional mean $\E_{\mu_j^\varphi}[\zeta_x]$ is increasing in $\varphi$:
\begin{equation}
  \partial_\varphi \E_{\mu_j^\varphi}[\zeta_x]
  \propto 
  \cov_{\mu_j^\varphi}[\zeta_x; (\zeta,1_B)]
  \propto
  \cov_{\mu_j^\varphi}[(\zeta,1_B); (\zeta,1_B)]
  >0.
\end{equation}
On the other hand, the mean is related to the renormalized potential according to (see e.g.~\cite[(3.60)]{MR4798104}):
\begin{equation}
    C_j \nabla V_j(\varphi) = \varphi - \E_{\mu_j^\varphi}[\zeta],
\end{equation}
and the $\varphi$-derivative of the left-hand side is increasing by convexity of $V_j$.
Thus all terms in the last display are increasing in $\varphi$ and have expectation $0$ under $\nu_{j+1}$.
It follows that
\begin{align}
  \E_{\nu_{j+1}}\qa{\E_{\mu_j^\varphi}[\zeta_x] \E_{\mu_j^\varphi}[\zeta_y]}
  &=\E_{\nu_{j+1}}\qa{\E_{\mu_j^\varphi}[\zeta_x](\varphi_y - C_j\nabla V_j(\varphi,y))}
  \nnb
  &=\E_{\nu_{j+1}}\qa{\E_{\mu_j^\varphi}[\zeta_x]\varphi_y}
  - \E_{\nu_{j+1}}\qa{\E_{\mu_j^\varphi}[\zeta_x]C_j\nabla V_j(\varphi,y))}
  \nnb
  &\leq\E_{\nu_{j+1}}\qa{(\varphi_x-C_j\nabla V_j(\varphi,x))\varphi_y}
  \nnb
  &=\E_{\nu_{j+1}}\qa{\varphi_x\varphi_y}
  -\E_{\nu_{j+1}}\qa{C_j\nabla V_j(\varphi,x)\varphi_y}
  \leq \E_{\nu_{j+1}}\qa{\varphi_x\varphi_y},
\end{align}
where the two inequalities follow from the FKG inequality satisfied by $\nu_{j+1}$ and that the involved terms have mean $0$.
\end{proof}

\begin{lemma} \label{lem:twopoint-renorm2}
  The renormalized measure $\nu_j$ has massive correlation decay with squared mass $\mu_j$ given by
  the lower bound in Propositions~\ref{prop:H0}--\ref{prop:H0-L1} respectively Corollary~\ref{cor:Hej0}:
  \begin{equation}
  \avg{\varphi(x)\varphi(y)}_{\nu_j}
  \lesssim (-\Delta_H + \mu_{j})^{-1}(x,y).
  \end{equation}
\end{lemma}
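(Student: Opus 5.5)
We will prove the lemma by the same Brydges--Fr\"ohlich--Spencer (BFS) random walk argument as in the lattice bound \eqref{e:BFS-bound-lattice}. The unit-lattice Laplacian is replaced by the hierarchical Laplacian restricted to scales above $j$, and $\cosh$ is replaced by the renormalized single-block potential $V_j(B,\cdot)$. Three structural facts make this possible.

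First, the measure $\nu_j$ is a Gaussian measure with covariance $C_{j,M}=\sum_{k=j}^M\gamma_kQ_k$ on fields constant on $j$-blocks, perturbed by $\sum_{B\in\cB_j}V_j(B,\varphi_B)$. Its inverse covariance on $X_j$ is a hierarchical Laplacian $-\Delta_{H,j}$ (plus an infrared zero-mode regularization). By Lemma~\ref{lem:hiergen} its off-diagonal entries are nonpositive, so $\Delta_{H,j}$ generates a (killed) continuous-time random walk on $\cB_j$ that jumps between blocks at hierarchical distances; the masslessness at scale $M$ causes no issue in finite volume.

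Second, each $V_j(B,\cdot)$ is even by symmetry of $\cosh$. It is convex, and its Hessian is bounded below uniformly by Propositions~\ref{prop:GHS}, \ref{prop:H0}--\ref{prop:H0-L1} and Corollary~\ref{cor:Hej0}. Concretely, $V_j''(B,\varphi)\ge V_j''(B,0)\ge m_j$ with $|B|^{-1}m_j \gtrsim \mu_j$ in the normalization of the statement, namely $L^{2j}\mu_j\approx \min\{\mu L^{(2+2b^2)j},1\}$.

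Third, the BFS representation applies to even single-site potentials $U$ for which $t\mapsto U(\sqrt t)$ is convex, or at least for which $U(\sqrt{t})-\frac{m}{2}t$ is increasing. We check this for $U=V_j(B,\cdot)$. The function $U'(s)/s$ is nondecreasing for $s>0$ because $U'''\ge0$ there, which is exactly Proposition~\ref{prop:GHS}. Hence $\frac{d}{dt}U(\sqrt t)=U'(\sqrt t)/(2\sqrt t)$ is nondecreasing, so $U(\sqrt t)$ is convex in $t$. Moreover $U'(s)/s\ge U''(0)\ge m_j$, which gives $U(\sqrt{t+s})\ge U(\sqrt t)+\frac{m_j}{2}s$.

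With these facts, the BFS formula gives
\[
\avg{\varphi_{B_x}\varphi_{B_y}}_{\nu_j}=\int_0^\infty \E_{B_x}\qa{1_{X_T=B_y}\,Z(L_T)/Z(0)}\,dT .
\]
The monotonicity bound yields $Z(\ell)/Z(0)\le e^{-m_j\sum_B\ell_B}$, so
\[
\avg{\varphi_{B_x}\varphi_{B_y}}_{\nu_j}\le(-\Delta_{H,j}+m_j)^{-1}(B_x,B_y).
\]
It remains to compare this block resolvent with $(-\Delta_H+\mu_j)^{-1}(x,y)$ for $d_H(x,y)>L^j$. We use the spectral decomposition \eqref{eq:CH-m-spectral}: restricted to $X_j$, with the block-volume normalization, $(-\Delta_{H,j}+m_j)^{-1}$ equals $\sum_{k\ge j}\gamma_k(\mu_j)Q_k$ up to $O(1)$ factors on each scale. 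Its kernel at $d_H(x,y)=L^{k_0}>L^j$ is $\sum_{k\ge k_0}L^{-2k}\gamma_k(\mu_j)$, which is exactly $(-\Delta_H+\mu_j)^{-1}(x,y)$, since scales $k<j$ do not contribute for such $x,y$.

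We expect the main obstacle to be this normalization bookkeeping: converting the per-block Hessian lower bound (an $L^2$-normalized statement of the form $L^{2j}(1_j,H_j1_j)$) into a mass term $m_j$ for the block-spin random walk, and matching the resulting resolvent with $(-\Delta_H+\mu_j)^{-1}$ up to constants. Justifying BFS for the non-explicit potential $V_j$ is comparatively routine, given the GHS-type monotonicity established above; growth at infinity follows from the Jensen bound \eqref{e:VB-lbub} together with convexity.
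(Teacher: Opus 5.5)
Your proposal is correct and follows essentially the paper's proof: the BFS random walk representation for $\nu_j$ with the walk generated by $\Delta_{H,j,M}$ from Lemma~\ref{lem:hiergen}, the splitting of the even convex block potential into a mass term plus an increasing function of $\frac12\varphi^2$ (which you justify more explicitly than the paper, via $U'(s)/s\ge U''(0)$), and the resolvent comparison \eqref{e:DeltaHj-DeltaH}. One small index slip: by \eqref{eq:nuj}, $\nu_j$ is the Gaussian with covariance $C_{j,M}$ perturbed by $V_{j-1}$ (not $V_j$), so the argument yields mass $\mu_{j-1}$, which the paper then replaces by $\mu_j$ using $\mu_j\approx\mu_{j-1}$.
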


\begin{proof}
The bound \eqref{e:BFS-bound-lattice} from the BFS representation can be applied to the renormalized problem and gives mass $\mu_j$.
In more detail, since $V_j$ is symmetric and convex, we can write
\begin{equation}
  V_j(\varphi) = \frac12 \mu_j \varphi^2 + W_j(\frac12 \varphi^2),
\end{equation}
where $\mu_j>0$ can be chosen consistent with Section~\ref{sec:renpot} and
the function $W_j(\ell)$ increasing:
\begin{equation}
  W_j(\frac12 \varphi^2+\ell) \geq W_j(\frac12 \varphi^2).
\end{equation}
Now we apply the random walk representation to the renormalized measure
$\nu_j$ at scale $j$, defined in \eqref{eq:nuj}.
By Lemma~\ref{lem:hiergen} in Appendix~\ref{app:hier}, the Gaussian part of this measure has action
\begin{equation}
  C_{j,M}^{-1} = -\Delta_{H,j,M}+1/\gamma_{j,M}.
\end{equation}
Denoting by $\E$ the expectation of the random walk $X$ on $\cB_j$ with generator $\Delta_{H,j,M}$, for $B,B' \in \cB_j$,
\begin{align}\label{e:BFS-hier}
  \avg{\varphi_B\varphi_{B'}}_{\nu_j}
  &= \int_0^\infty \E_{B}\qa{ 1_{X_T=B'} \frac{Z_j(L_T)}{Z_j(0)}} \, dT
    \nnb 
  &\leq \int_0^\infty \E_{B}\qa{ 1_{X_T=B'} e^{-(1/\gamma_{j,M}+\mu_{j-1}) T}}\, dT
    \nnb
  &= (-\Delta_{H,j,M}+{ 1/\gamma_{j,M}} + \mu_{j-1})^{-1}(B,B')
  \lesssim (-\Delta_H+\mu_{j-1})^{-1} (B,B'),
\end{align}
where $(-\Delta_{H,j,M}+m^2)^{-1}(B,B')$ is covariance of $\varphi_B$ and $\varphi_{B'}$ under the
Gaussian measure with covariance $(-\Delta_{H,j,M}+m^2)^{-1}$ with the convention that $\varphi_x = \varphi_{B_x}$
and using that,
for any $\ell: \cB_j \to [0,\infty)$,
\begin{equation}
  Z_j(\ell) = \int e^{\frac12(\varphi,\Delta_{H,j,M} \varphi)} e^{-(\mu_{j-1}+1/\gamma_{j,M}) \sum_B (\frac12 \varphi_B^2+\ell_B) - \sum_B W_{j-1}(\frac12 \varphi_B^2+\ell_B)} \, d\varphi
\end{equation}
satisfies
\begin{equation}
  \frac{Z_j(\ell)}{Z_j(0)} \leq e^{-(\mu_{j-1}+1/\gamma_{j,M}) \sum_B \ell_B}.
\end{equation}
In the last inequality in \eqref{e:BFS-hier} we used \eqref{e:DeltaHj-DeltaH}.
This implies (using $\mu_j \approx \mu_{j-1}$ to simplify notation)
\begin{equation}
\avg{\varphi(x)\varphi(y)}_{\nu_j}
= \avg{\varphi_{B_x}\varphi_{B_y}}_{\nu_j}
\lesssim (-\Delta_H + \mu_{j})^{-1}(B_x,B_y)
= (-\Delta_H + \mu_{j})^{-1}(x,y),
\end{equation}
where again $\varphi_{B_x} = \varphi(x)$.
\end{proof}

\begin{proof}[Proof of regularized version of \eqref{e:thm-twopoint}]
Using
Lemma~\ref{lem:twopoint-renorm} in the first inequality below and Lemma~\ref{lem:twopoint-renorm2} in the second,
for $j(x,y)>j$ and $B_x$ and $B_y$ denoting $j$-blocks containing $x$ respectively $y$,
%
\begin{align}
  \avg{\varphi(x)\varphi(y)}_{\nu} 
  &\leq \avg{\varphi_{B_x}\varphi_{B_y}}_{\nu_j}
  \nnb
  &\lesssim (-\Delta_H + \mu_{j})^{-1}(B_x,B_y)
  = (-\Delta_H + \mu_{j})^{-1}(x,y).
 \end{align}
Choosing $j=j_0(b,\mu)$ we get that for $j(x,y) > j_0(b,\mu)$,
\begin{equation}
  \avg{\varphi(x)\varphi(y)}_{\hShG(b,\mu|-N,M)} \lesssim (-\Delta_H+1)^{-1}(x,y).
\end{equation}
The above condition on $x,y$ is equivalent to $d_H(x,y) \geq \mu^{-1/(2+2b^2)}$.
This proves the claim if $\mu\approx 1$.

The regularized sinh-Gordon measure has the following scale covariance: for $s = L^\sigma \in L^\Z$ with the notation $R_s\varphi(x) = \varphi(x/s)$,
\begin{equation} \label{e:measure-rescale}
  \avg{F}_{\hShG(b,\mu | -N,M)} = \avg{R_sF}_{\hShG(b,\mu s^{2+2b^2} | -N+\sigma|M+\sigma)},
\end{equation}
whereas the massive free field satisfies
\begin{equation}
  \avg{F}_{\hGFF(m)} = \avg{R_sF}_{\hGFF(m s)}
  .
\end{equation}
Therefore, rescaling, we obtain that for general $\mu$ with $s = L^\sigma \approx \mu^{-1/(2+2b^2)}$:
\begin{align}
  \avg{\varphi(x)\varphi(y)}_{\hShG(b,\mu | -N,M)}
  &\approx \avg{\varphi(x/s)\varphi(y/s)}_{\hShG(b,1 | -N+\sigma|M+\sigma)}
    \nnb
  &\lesssim \avg{\varphi(x/s)\varphi(y/s)}_{\hGFF(1)}
  = \avg{\varphi(x)\varphi(y)}_{\hGFF(1/s)} .
\end{align}
The right-hand side is $(-\Delta_H+\mu_\eff)^{-1}(x,y)$ with $\mu_\eff \approx s^{-2} = \mu^{2/(2+2b^2)}$ as claimed.
\end{proof}

\begin{proof}[Proof of scale equivariance~\eqref{e:thm-scaling}]
We begin once again with the approximate scale-symmetry of the cut-off hierarchical GFF, 
which leads to the identity 
\begin{equation}
  \avg{R_sF}_{\hShG(b,\mu | -N,M)} = \avg{F}_{\hShG(b,\mu s^{-2-2b^2} | -N+\sigma|M+\sigma)},
\end{equation}
Now for $F:\mathcal{S}'\to \bR$ bounded and continuous in the strong dual topology,
\begin{align}
\avg{R_s F}_{\hShG(b,\mu)} = \lim_{M\to \infty}\lim_{N\to \infty}\avg{R_sF}_{\hShG(b,\mu | -N,M)}&=\lim_{M\to \infty}\lim_{N\to \infty}\avg{F}_{\hShG(b,\mu s^{-2-2b^2} | -N+\sigma|M+\sigma)}
\nnb
&=\ev{F}_{\hShG(b,\mu s^{-2-2b^2})},
\end{align}
provided the limit exists, which we show next.
\end{proof}

\subsection{Continuum and infinite-volume limit}

Finally, we prove that $\nu^{\hShG(b,\mu|-N,M)}$ converges weakly to a probability measure $\nu^{\hShG(b,\mu)}$ on $\cS'(\R^2)$ as $-N\to-\infty$ and then $M\to\infty$.
In the following, we work on the probability space defined in terms of independent Gaussians for all increments of the (two-sided)
branching random walk representating the hierarchical covariance of all scales,
so that the regularized measures can all be defined simultaneously.

\paragraph{Continuum limit}

The existence of the continuum limit follows from the convergence of the Gaussian multiplicative chaos (GMC).
Indeed, let 
\begin{equation}
  M^\pm_{\epsilon}(\Lambda) = \int_{\Lambda} \epsilon^{2b^2} e^{\pm 2b\varphi_\epsilon(x)} \, dx
\end{equation}
be the regularized Gaussian multiplicative chaos measure with $\epsilon=L^{-N-1}$,
i.e., $\varphi_\epsilon$ has Gaussian distribution with covariance $C_{-N,M}$.
For $M$ fixed, this setup is equivalent to a multiplicative cascade \cite{MR829798,MR3497718}, and 
it is well known that $M^\pm_{\epsilon}$ converges in probability as $\epsilon=L^{-N-1}\to 0$ weakly 
as a random positive measure to a limit measure $M^\pm$. Let
\begin{equation}
  V_{N,M}(\Lambda) = \frac{\mu}{8b^2}\pB{M_\epsilon^+(\Lambda)+M_\epsilon^{-}(\Lambda)} 
\end{equation}
and
\begin{equation}
  V_{M}(\Lambda) = \frac{\mu}{8b^2}\pB{M^+(\Lambda)+M^{-}(\Lambda)} .
\end{equation}
Then also $e^{-V_{N,M}} \to e^{-V_M}$ in $L^1$.
This implies that the finite-volume continuum hierarchical sinh-Gordon measure $\nu^{\hShG(b,\mu|-\infty,M)} = \lim_{N\to\infty} \nu^{\hShG(b,\mu|-N,M)}$ exists
with expectation value
\begin{equation}
  \label{e:infvol-Lambda}
  \E_{\nu^{\hShG(b,\mu|-\infty,M)}}[F] 
  =
  \frac{\E\qa{e^{-V_{M}(\Lambda)}F}}
  {\E\qa{e^{-V_{M}(\Lambda)}}}
  ,
\end{equation}
where $\Lambda$ is an arbitrarily large union of $M$-blocks containing the support of $F$,
as discussed below Definition~\ref{defn:nu-NM}. In particular, by standard methods, the hierarchical GFF extends to a Gaussian process indexed by $L^2(\bR^2)$
with values in $\bigcap_p L^p(\mu_{\hGFF})$.  Therefore the $L^1$-convergence of $e^{-V_{-N,M}}\to e^{-V_{-\infty, M}}$
implies the convergence of $\phi_{-N,M}(f)$ to $\phi_{-\infty,M}(f)$ in distribution for $f\in L^2$.

\paragraph{Infinite-volume limit}

For the infinite-volume limit $M\to\infty$, we can (for example) start from the fact that
the log-Sobolev inequality and the Herbst argument \cite[Proposition~5.4.1]{MR3155209} imply the following uniform Gaussian integrability estimate
for any $f\in C_c^\infty(\R^2)$:
\begin{equation} \label{e:infvol-moment}
  \E_{\nu^{\hShG(b,\mu|-N,M)}}[e^{(\varphi,f)}] \leq e^{\frac{1}{2\gamma}\|f\|_{L^2}^2}
  .
\end{equation}
This already implies tightness of the measures $\nu^{\hShG(b,\mu|-N,M)}$
and therefore only convergence without passing to a subsequence is left to show.
Indeed, tightness follows, for example, via equicontinuity of the characteristic functional implied by the above uniform bound
and Lemma~5.2 of \cite{biermé2017generalizedrandomfieldslevys}.

It is verified in Lemma~\ref{lem:hiergen} that the measure $\nu^{\hShG(b,\mu|-N,M)}$ is ferromagnetic.
By the first Griffiths inequality, all correlation functions are therefore nonnegative, and it therefore suffices
to show that $\E_{\nu^{\hShG(b,\mu|-\infty,M)}}[e^{(\varphi,f)}]$ converges for nonnegative test functions $f$.
The reduction to nonnegative test functions using positivity of the correlation functions
follows as in the proof of \cite[Theorem~11.2.1]{MR887102}.
Further background on Griffiths inequalities can also be found in \cite{MR887102}.

For nonnegative $f$, we now show that the left-hand side of \eqref{e:infvol-moment} is increasing in $M$
and thus converges.
This is the analogue of the monotonicity of the generating functional in \cite[Theorem~10.2.2]{MR887102}.
To see this, since $\Lambda$ is arbitrary, we may assume that $\Lambda$ is
a union of $(M+1)$-blocks containing the support of $f$. It then suffices to show that the expectation with $\Lambda$ fixed is increasing in $M$.
This follows from the second Griffiths inequality. Indeed, it is also verified in Lemma~\ref{lem:hiergen} that, pointwise,
\begin{equation}
  C_{-N,M+1}^{-1} \leq C_{-N,M}^{-1}.
\end{equation}
Thus, with the matrix $R = C_{-N,M}^{-1} - C_{-N,M+1}^{-1}$ with nonnegative entries, with $t=1$,
\begin{equation}
  \Eg_{C_{-N,M+1}}[F] \propto \Eg_{C_{-N,M}}\qa{e^{t(\varphi,R\varphi)} F},
\end{equation}
whereas, when $t=0$, trivially,
\begin{equation}
  \Eg_{C_{-N,M}}[F] = \Eg_{C_{-N,M}}\qa{e^{t(\varphi,R\varphi)} F}.
\end{equation}
Using the second Griffiths inequality, we deduce that
\begin{equation}
\ddp{}{t} \E_{t}[e^{(f,\varphi)}] = \cov_t((\varphi,R\varphi),e^{(f,\varphi)}) \geq 0,
\end{equation}
where $\E_t$ denotes the expectation of the measure defined as in  \eqref{e:infvol-Lambda} but with interpolated Gaussian covariance,
and both functions inside the two arguments of the covariance are absolutely monotone.
This implies monotonicity in $M$ and thus existence of the limit in distribution as well as the convergence of $\phi(f)$ in law for $f\in L^2$.

\addtocontents{toc}{\protect\setcounter{tocdepth}{1}}

\appendix
\section{Hierarchical covariance and branching random walk}
\label{app:hier}

In this appendix, we collect properties of the hierarchical Laplacian and its inverse that we need,
see also \cite{MR3969983}.
The block spin projections $Q_j$ have kernel entries
\begin{equation} \label{e:Q-entries}
  Q_j(x,y) = L^{-dj} \1_{d_H(x,y) \leq L^j}  = L^{-2j} \1_{d_H(x,y) \leq L^j} = L^{-2j} \1_{j(x,y) \leq j},
\end{equation}
with respect to Lebesgue measure so that
\begin{equation}
  Q_j f(x) = \int Q_j(x,y) f(y)\, dy
\end{equation}
is the orthogonal projection with respect to the $L^2$ inner product on $\R^2$
onto the space $X_j$ of functions constant on the $j$-blocks.
The hierarchical spectral projections are then defined by
\begin{equation}
  \label{e:hier-proj}
  P_j= Q_{j-1}-Q_j.
\end{equation}
The projections $(P_k)$ and $(Q_j)$ satisfy $P_k Q_j= (Q_{k-1}-Q_k)Q_j = Q_{j\vee (k-1)} - Q_{j\vee k}$ and therefore
\begin{equation}
 P_{k}Q_j = 0 \quad (j\geq k), \qquad P_{k}Q_j = P_{k} \quad (j\leq k-1).
\end{equation}
In particular, for any $j<M$, the projections $P_{j+1}, \dots, P_M, Q_M$ are orthogonal and
\begin{equation}
 Q_j = P_{j+1}+ ... + P_M + Q_M,
\end{equation}
and if $j={-N}$ then $Q_{-N}$ is the identity on the space $X_{-N}$ of functions on $\Lambda_{-N}$ so that the
right-hand side provides an orthogonal resolution of the identity.
The hierarchical covariance is defined by
\begin{equation}
  C_{-N,M} = \sum_{k=-N}^M \gamma_k Q_k, \qquad \gamma_k= L^{2k} \log L.
\end{equation}
Define
\begin{equation}
  \gamma_{j,k-1}
  = \sum_{l=j}^{k-1} \gamma_l 
  \approx L^{2k}
  .
\end{equation}

\begin{lemma} \label{lem:hiergen}
  For scales $j<M$, the truncated hierarchical covariance $C_{j,M} = \sum_{k=j}^M \gamma_k Q_k$ 
  restricted to the space $X_j$ of block-constant functions is the inverse of
  $-\Delta_{H,j,M}+ 1/\gamma_{j,M}$, where $\Delta_{H,j,M}$ is the generator
  of a Markov process on $\cB_j$. 
  Moreover, for any $m^2 > 0$,
  \begin{equation} \label{e:DeltaHj-DeltaH}
   (-\Delta_{H,j,M}+1/\gamma_{j,M}+m^2)^{-1}(B,B') \lesssim (-\Delta_{H}+m^2)^{-1}(B,B')
  \end{equation}
  for distinct $j$-blocks $B,B'$ and,
  for all $j$-blocks $B,B'$,
  \begin{equation} \label{e:CMM+1}
      C_{j,M+1}^{-1}(B,B') \leq C_{j,M}^{-1}(B,B').
  \end{equation}
\end{lemma}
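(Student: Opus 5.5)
We will prove Lemma~\ref{lem:hiergen} by explicit computation in the hierarchical spectral basis, using the projections $P_k$ and $Q_M$ from \eqref{e:hier-proj}. On $X_j$ we have the orthogonal resolution of the identity
\[
Q_j = P_{j+1}+\dots+P_M+Q_M .
\]
Since $Q_kP_l = P_l$ for $l>k$ and $Q_kP_l=0$ for $l\le k$, this gives
\[
C_{j,M} = \sum_{l=j+1}^{M} \gamma_{j,l-1} P_l + \gamma_{j,M} Q_M .
\]
Hence the inverse on $X_j$ is
\[
C_{j,M}^{-1} = \sum_{l=j+1}^M \gamma_{j,l-1}^{-1} P_l + \gamma_{j,M}^{-1} Q_M .
\]
Writing $P_l = Q_{l-1}-Q_l$ and regrouping, this becomes
\[
C_{j,M}^{-1} = \gamma_{j,j}^{-1} Q_j - \sum_{k=j+1}^{M} \bigl(\gamma_{j,k-1}^{-1}-\gamma_{j,k}^{-1}\bigr) Q_k ,
\]
where the $Q_M$ terms combine exactly. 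Set $a_k := \gamma_{j,k-1}^{-1}-\gamma_{j,k}^{-1} >0$. Using $Q_j=\mathrm{id}$ on $X_j$ and $\gamma_{j,j}^{-1} = \sum_{k=j+1}^M a_k + \gamma_{j,M}^{-1}$ (telescoping), we get
\[
C_{j,M}^{-1} = \gamma_{j,M}^{-1} + \sum_{k=j+1}^M a_k (\mathrm{id} - Q_k).
\]
We therefore define $-\Delta_{H,j,M} := \sum_{k=j+1}^M a_k(\mathrm{id}-Q_k)$. Each $Q_k$ restricted to $X_j$ is a Markov kernel on $\cB_j$: it jumps uniformly to a $j$-block within the same $k$-block, and is doubly stochastic. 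So each $Q_k - \mathrm{id}$ generates a jump process, and a positive combination of generators is again a generator. This proves the first claim, with nonpositive off-diagonal entries of $C_{j,M}^{-1}$ (ferromagnetism).

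For \eqref{e:CMM+1}, we compare the coefficient formulas for $M$ and $M+1$. For $k\le M$ the coefficients $a_k$ depend only on $j,k$ and coincide. The $M+1$ operator has the additional term $a_{M+1}(\mathrm{id}-Q_{M+1})$ and constant $\gamma_{j,M+1}^{-1}$ in place of $\gamma_{j,M}^{-1}$. Off the diagonal, the extra $-a_{M+1}Q_{M+1}(B,B')\le 0$ gives the inequality. On the diagonal, the difference is
\[
\gamma_{j,M+1}^{-1}-\gamma_{j,M}^{-1} + a_{M+1}\bigl(1-Q_{M+1}(B,B)\bigr) = -a_{M+1}Q_{M+1}(B,B)\le 0 ,
\]
using $a_{M+1}=\gamma_{j,M}^{-1}-\gamma_{j,M+1}^{-1}$, where $Q_{M+1}(B,B) = L^{-2(M+1-j)}$ in the $X_j$ normalization. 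So the entrywise inequality holds everywhere. The only care needed is consistent normalization of matrix entries on $X_j$ (counting measure on blocks versus Lebesgue measure); we will fix entries via the $L^2$-normalized indicators of $j$-blocks.

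For \eqref{e:DeltaHj-DeltaH}, which we expect to be the main (mildly) technical step, we again use the spectral form. Both sides diagonalize simultaneously in the $P_l$, $Q_M$ basis: the left side has eigenvalue $(\gamma_{j,l-1}^{-1}+m^2)^{-1}$ on $P_l$ and $(\gamma_{j,M}^{-1}+m^2)^{-1}$ on $Q_M$, while the right side has $(c_L\lambda_l+m^2)^{-1}$ on $P_l$. Since $\gamma_{j,l-1}\approx L^{2l}\approx (c_L\lambda_l)^{-1}$, the multipliers are comparable up to an $L$-dependent constant. Comparable multipliers alone do not give pointwise kernel bounds, so we pass to the $Q_k$ representation. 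Writing $(\alpha_l+m^2)^{-1}$ with increasing $\alpha_l^{-1}$ as $\sum_k \tilde\gamma_k(m)Q_k$, as in \eqref{eq:CH-m-spectral}--\eqref{e:gammam}, the kernel at distinct blocks $B\ne B'$ with $j(B,B')=k_0>j$ is
\[
\sum_{k\ge k_0} \tilde\gamma_k(m) L^{-2k}.
\]
Here $\tilde\gamma_k(m)$ is a difference of consecutive inverse eigenvalues. The main work is to show these differences are comparable termwise, each being of order $L^{2k}/(1+m^2L^{2k})^2$ up to constants; for the left side the sum is truncated at $k\le M$ with a final $Q_M$ term that is bounded by the tail of the right side. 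Summing then gives \eqref{e:DeltaHj-DeltaH}, with constants depending only on $L$.
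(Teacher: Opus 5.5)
Your proposal is correct and is essentially the paper's proof. It uses the same $P_k$-spectral decomposition of $C_{j,M}$ regrouped into the $Q_k$ form, the same observation that $C_{j,M+1}^{-1}-C_{j,M}^{-1}=-a_{M+1}Q_{M+1}$ is entrywise nonpositive, and the same termwise comparison of the $Q_k$-coefficients $\eta_{j,k}(m^2)=\gamma_k/((1+\gamma_{j,k}m^2)(1+\gamma_{j,k-1}m^2))$ with $\eta_{-\infty,k}(m^2)$ for \eqref{e:DeltaHj-DeltaH}.

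The differences are cosmetic. Writing $-\Delta_{H,j,M}=\sum_k a_k(\mathrm{id}-Q_k)$ exhibits the generator directly, where the paper instead checks entry signs and row sums. The left-hand $k=M$ coefficient is just $\eta_{j,M}$, so it is compared termwise like the others. Bounding the undifferenced piece $(\gamma_{j,M}^{-1}+m^2)^{-1}Q_M$ by the tail alone would fail when $m^2L^{2M}\gg 1$.
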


\begin{proof}
It suffices to consider the claim for $j=0$.
Using \eqref{e:hier-proj} the given covariance is
\begin{equation}
  C = \sum_{k=0}^M \gamma_kQ_k
  = \sum_{k=0}^M \gamma_k \sum_{l=k+1}^M P_l + \sum_{k=0}^M \gamma_k Q_M
  = \sum_{k=1}^M  { \gamma_{0,k-1}} P_k + { \gamma_{0,M} } Q_M.
\end{equation}
Since this is a spectral decomposition, the inverse is
\begin{align}
  \cL = C^{-1} = \sum_{k=1}^M { \gamma_{0,k-1}^{-1}} P_k + { \gamma_{0,M}^{-1} } Q_M
  &= \sum_{k=1}^M { \gamma_{0,k-1}^{-1} } (Q_{k-1}-Q_k) + { \gamma_{0,M}^{-1} } Q_M
    \nnb
  & = \sum_{k=0}^{M-1} { \gamma_{0,k}^{-1} } Q_{k}- \sum_{k=1}^M { \gamma_{0,k-1}^{-1} }  Q_k + { \gamma_{0,M}^{-1} } Q_M
    \nnb
  & = { \gamma_{0}^{-1}} Q_0+  \sum_{k=1}^{M-1} { (\gamma_{0,k}^{-1}-\gamma_{0,k-1}^{-1})} Q_{k} - { \gamma_{0,M-1}^{-1}} Q_M + { \gamma_{0,M}^{-1} } Q_M
    \nnb
  & = { \gamma_{0}^{-1} } Q_0+  \sum_{k=1}^{M} { (\gamma_{0,k}^{-1}-\gamma_{0,k-1}^{-1})} Q_{k}
    .
    \label{e:hiergen-pf-C-1}
\end{align}

To show that $-\cL$ is the defective generator of a random walk, we need to show that $\cL_{xx} \geq 0$ and $\cL_{xy} \leq 0$ for $x\neq y$
and $-\sum_{y: y\neq x} \cL_{xy} \leq \cL_{xx}$.
Since the diagonal entries of $Q_k$ are $L^{-2k}$, the diagonal entries of $\cL$ are
\begin{equation}
  \cL_{xx}
  = { \gamma_{0}^{-1}} + \sum_{k=1}^M { (\gamma_{0,k}^{-1}-\gamma_{0,k-1}^{-1})}L^{-2k}
  = (L^2-1) \sum_{k=1}^M { \gamma_{0,k-1}^{-1}} L^{-2k} + { \gamma_{0,M}^{-1} }L^{-2M}>0.
\end{equation}
Using \eqref{e:Q-entries} for the entries of $Q_k$ and $\gamma_{0,k} > \gamma_{0,k-1}$,
the offdiagonal entries of $\cL$ are
\begin{equation}
  \cL_{xy}
  = \sum_{{ k={j(x,y)\vee 1}}}^M { (\gamma_{0,k}^{-1}-\gamma_{0,k-1}^{-1}) }L^{-2k} \leq 0,
\end{equation}
where $j(x,y)$ is the smallest scale such that $x,y$ are in the same block.
Since $\cL$ is positive definite and $1$ is an eigenvector, also $\sum_{y} \cL_{xy} = (\cL 1)_x = { \gamma_{0,M}^{-1}} >0$.
Define
\begin{align}
  -\Delta_{H,0,M} = \cL - { \gamma_{0,M}^{-1}} \id.
\end{align}
Then $\Delta_{H,0,M}$ is the generator of a Markov process. 

The pointwise comparison \eqref{e:DeltaHj-DeltaH} follows from the spectral representations of the two resolvents
and the positivity of the hierarchical kernels. Indeed,
\begin{equation} \label{e:DeltaHjm}
  (C^{-1}+m^2)^{-1}
  = \sum_{k=1}^M (\gamma_{0,k-1}^{-1}+m^2)^{-1} P_k + (\gamma_{0,M}^{-1}+m^2)^{-1} Q_M
  = \sum_{k=0}^M \eta_{0,k}(m^2) Q_k,
\end{equation}
and, on the other hand,
\begin{equation} \label{e:DeltaHm}
  (-\Delta_H + m^2)^{-1} = \sum_{k\in \Z} (\gamma_{-\infty,k-1}^{-1}+m^2)^{-1} P_k
  = \sum_{k\in \Z} \eta_{-\infty,k}(m^2) Q_k
\end{equation}
where $\eta_{0,0}(m^2) = (\gamma_0^{-1}+m^2)^{-1}$ and, for $k>j$,
\begin{equation}
  \eta_{j,k}(m^2)
  = (\gamma_{j,k}^{-1}+m^2)^{-1} - (\gamma_{j,k-1}^{-1}+m^2)^{-1}
  = \frac{\gamma_k}{(1+\gamma_{j,k}m^2)(1+\gamma_{j,k-1}m^2)}
  .
\end{equation}
Thus when $m^2=0$ then $\eta_{j,k} = \gamma_{j,k}-\gamma_{j,k-1} = \gamma_k$ as expected and, for $k>0$,
\begin{equation}
  \frac{\eta_{0,k}(m^2)}{\eta_{-\infty,k}(m^2)}
  \leq 
  \frac{\gamma_{-\infty,k}\gamma_{-\infty,k-1}}
  {\gamma_{0,k}\gamma_{0,k-1}} \lesssim 1.
\end{equation}
For $d_H(x,y) > 1$, only terms $k> 0$ contribute in both \eqref{e:DeltaHjm} and \eqref{e:DeltaHm} and the claim follows.

The inequality \eqref{e:CMM+1} is immediate from \eqref{e:hiergen-pf-C-1}
since $\gamma_{0,k}^{-1}-\gamma_{0,k-1}^{-1} \leq 0$ and $Q_k$ has nonnegative entries.
\end{proof}

\begin{corollary}
  The regularized sinh-Gordon measure $\nu$ 
  as well as    the renormalized measures $\nu_j$ satisfy the FKG and first and second Griffiths inequalities.
\end{corollary}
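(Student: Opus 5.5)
The plan is to reduce both measures to a single-site ferromagnetic form and then apply the standard inequalities. Each measure in question has density with respect to Lebesgue measure on $\R^{\cB_j(\Lambda)}$ (respectively $\R^{\cB_{-N}(\Lambda)}$) of the form
\begin{equation*}
  \exp\Big(-\tfrac12(\varphi,\cL\varphi) - \sum_B U_B(\varphi_B)\Big).
\end{equation*}
For $\nu$ we take $\cL = C_{-N,M}^{-1}$ and $U_B = V_{-N-1}(B,\cdot)$. For $\nu_j$ we take $\cL = C_{j,M}^{-1}$ restricted to $X_j$ and $U_B = V_{j-1}(B,\cdot)$, using the additivity \eqref{e:Vj}. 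In both cases Lemma~\ref{lem:hiergen} (applied with $j=-N$ and with general $j$) shows that $\cL = -\Delta_{H,j,M} + 1/\gamma_{j,M}$, where $\Delta_{H,j,M}$ is a Markov generator. Hence the off-diagonal entries satisfy $\cL_{BB'} \le 0$, so the interaction $-\tfrac12(\varphi,\cL\varphi) = \sum_{B\ne B'} (-\cL_{BB'})\varphi_B\varphi_{B'}/2 + (\text{diagonal})$ is ferromagnetic. The diagonal quadratic part can be absorbed into the single-site measures.

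Next I check the single-site potentials. Each $U_B$ is even. For $\nu$ this is immediate from $\cosh$. For $\nu_j$ it follows because $V_{j}(B,\cdot)$ is an expectation over a centered Gaussian of an even function, and that centered Gaussian is symmetric. Each $U_B$ is also convex (and $C^2$). For the bare potential this is clear. For $V_j$ it holds by Lemma~\ref{lem:BL}, or directly from Proposition~\ref{prop:GHS}, whose argument shows $V_j'''\ge 0$ on $\varphi>0$; together with evenness this gives $V_j''\ge V_j''(0)\ge 0$.

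With this structure the inequalities follow from known results. FKG holds because the density is $\exp$ of a function whose off-diagonal second derivatives are $-\cL_{BB'}\ge 0$, so the Holley/FKG lattice condition holds on $\R^n$ for any single-site potentials. The first and second Griffiths inequalities for ferromagnetic pair interactions with even single-site measures require those measures to be in the Griffiths class. For single-site measures $e^{-U(\varphi)-a\varphi^2}d\varphi$ with $U$ even and such that the measure is a limit of Ising-type measures, this class contains all measures of the form $e^{-P(\varphi)}$ with $P$ an even polynomial (Simon--Griffiths), and by approximation those with $U$ even and convex in $\varphi^2$-friendly form. The cleanest route, and the one I would take first, is Ginibre's general framework (as in \cite{MR887102}): the Griffiths inequalities hold for any product of even single-site measures when the interaction has nonnegative coefficients, provided the duplicated-variable condition holds. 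For one-dimensional even measures this condition is automatic, since Ginibre's inequality for a single even real measure holds in general.

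The expected obstacle is precisely this point. The first Griffiths inequality requires only evenness. The second requires Ginibre's condition for one-site measures, which holds for every even finite measure on $\R$ after the rotation $(\varphi\pm\varphi')/\sqrt2$ used in Ginibre's argument. I would verify it directly by expanding $\exp$ of the ferromagnetic interaction in monomials and using $\int\!\!\int \prod (x\pm y)^{n}(x\mp y)^{m}\,d\rho(x)\,d\rho(y)\ge0$ for even $\rho$. Integrability of all moments is guaranteed by the uniform convexity from Proposition~\ref{prop:CR} and the Gaussian factor.
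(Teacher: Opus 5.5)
Your proposal is correct and follows the paper's route. Lemma~\ref{lem:hiergen} makes the Gaussian part ferromagnetic, the (renormalized) potential is a sum of single-block terms, and the standard lattice-condition FKG and Ginibre duplicate-variable arguments then apply; the evenness of $V_j(B,\cdot)$ that you verify is the ingredient the paper leaves implicit for the Griffiths inequalities. Two remarks on details: the detour through the Griffiths--Simon class is unnecessary, since GKS~I/II do not require it and, as you conclude yourself, Ginibre's condition holds for every even measure on $\R$; and integrability follows simply from $U_B\ge 0$ and the nondegenerate Gaussian factor, not from Proposition~\ref{prop:CR}, which is an upper bound on the Hessian.
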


\begin{proof}
  This follows from Lemma~\ref{lem:hiergen} which shows that the Gaussian part
  of the measure is ferromagnetic, together with the fact that the potential
  (respectively renormalized potential) is local.
\end{proof}

\section{Discussion of relation to controversies in physics and questions}

\subsection{Comparison of perturbation theory in sine- and sinh-Gordon models}
\label{app:sinsinhpert}

Continuing the discussion from Section~\ref{sec:interpretation},
consider the second order term in the perturbation theory for the renormalized potential
in the sine- and sinh-Gordon model.

For the sine-Gordon model in volume $\Lambda$ and parameters $(\beta,z)$, this term is given by
\begin{align}
  &\frac{z^2}{2} \int_{(\Lambda \times \{\pm 1\})^2} d\xi_1 \, d\xi_2 \, \Eg\qa{\wick{e^{i2\beta \zeta_1 \varphi(x_1)}}\wick{e^{i2\beta \zeta_2 \varphi(x_2)}}} e^{i2\beta\sum_j \sigma_j \varphi(x_j)}
    \nnb
  &\approx
    \frac{z^2}{2} \int_{(\Lambda \times \{\pm 1\})^2} d\xi_1 \, d\xi_2 \, |x_1-x_2|^{4\beta^2 \sigma_1\sigma_2} e^{i2\beta\sum_j \sigma_j \varphi(x_j)}, \qquad \int d\xi_i = \int dx_i \sum_{\sigma_i= \pm 1}
    ,
\end{align}
where $\Eg[\cdot]$ is the expectation of a large-distance regularized Gaussian free field $\zeta$ so that the
exponential two-point function is
at short distances given by $\approx |x_1-x_2|^{4\beta^2\sigma_1\sigma_2}$.
(See for example \cite[(4.15)]{MR4767492} or \cite[(5.17)]{2508.14806}, where
the convention for $\beta$ is different: our $\beta^2$ corresponds to $\beta/8\pi$
in these references.)
For $\varphi =0$ the last integral is divergent when $\beta^2 \geq 1/2$.
It is the ``neutral'' term $\sigma_1\neq \sigma_2$ that makes the integral divergent,
and as a consequence the difference between this term with a smooth $\varphi$ and with $\varphi=0$
is well behaved and the divergence can therefore be absorbed in a counterterm (infinite energy renormalization).
For example,
\begin{equation}
  \frac{z^2}{2} \int_{(\Lambda \times \{\pm 1\})^2} d\xi_1 \, d\xi_2 \, \Eg\qa{\wick{e^{i2\beta \sigma_1 \zeta(x_1)}}\wick{e^{i2\beta \sigma_2 \zeta(x_2)}}} (e^{i2\beta\sum_j \sigma_j \varphi(x_j)}-1) = O(z^2) \|\nabla \varphi\|_{L^\infty}.
\end{equation}

The situation is different for the sinh-Gordon model.
The second order term in perturbation theory is given by the same expression with $\beta^2 = -b^2$ and $z=-\mu$:
\begin{align}
  &\frac{\mu^2}{2} \int_{(\Lambda \times \{\pm 1\})^2} d\xi_1 \, d\xi_2 \, \Eg\qa{\wick{e^{2b \sigma_1 \zeta(x_1)}}\wick{e^{2b \sigma_2 \zeta(x_2)}}} e^{2b\sum_j \sigma_j \varphi(x_j)}
  \nnb
  &\approx
  \frac{\mu^2}{2} \int_{(\Lambda \times \{\pm 1\})^2} d\xi_1 \, d\xi_2 \, |x_1-x_2|^{-4b^2 \sigma_1\sigma_2} e^{2b\sum_j \sigma_j \varphi(x_j)}.
\end{align}
For $\varphi=0$ the integral is again divergent when $b^2 \geq 1/2$,
but differently from the situation of the sine-Gordon model,
the divergence cannot be regularized by a constant counterterm.
It is now the ``charged'' term $\sigma_1=\sigma_2$ that causes the divergence
and smoothness of $\varphi$ does not help.

From a perturbation theory point of view, the fact that we still obtain an estimate with error $O(\mu^p)$ with $p \in (1,2)$ when $b^2 \geq 1/2$ in \eqref{e:intro-HeL1} is thus a nonperturbative effect.
  

\subsection{Relation to controversies in physics and questions}
\label{app:physics}

The predicted behavior of the sinh-Gordon model is surprisingly controversial in physics.
We hope that our rigorous analysis of the hierarchical version of the model can provide a starting point
for a resolution.
In this appendix, we collect some natural questions whose answer we think would clarify the picture,
and which we expect should be addressed in the hierarchical version of the model first.

The consensus seems to be that the predictions for $b^2 \in (0,1/2)$ are relatively trustworthy.
From a mathematical perspective, however, the analytic continuation seems completely unclear.

\begin{question} \label{q:1}
  Do the correlation functions defined from the infinite-volume measure $\nu^{\hShG(b,\mu)}$ extend to analytic functions in a complex neighborhood containing
  $b^2 \in (0,1/2)$? Can they be analytically continued to $\beta^2 = -b^2 \in (0,1/2)$?
\end{question}

The first puzzle from the perspective of physics concerns the region $b^2 \in [1/2,1)$. The comparison between sine- and sinh-Gordon models
refers to the particle spectrum of the sine-Gordon theory (in particular the soliton-antisoliton bound states -- known as breathers)
which is expected to change at $b^2=1/2$.
Indeed, it is expected that the number of breathers in the massless sine-Gordon model is given by
(with $[\cdot]$ denoting the integer part):
\begin{equation}
  \qa{\frac{1-b^2}{b^2}}.
\end{equation}
This leads to doubts about the validity of the conjectures for $b^2 \in [1/2,1)$, see \cite[Section~2.2.7]{MR4258290}.

Probabilistically, the regime $b^2 \in [1/2,1)$ is technically more difficult in our analysis to handle because second-order perturbation theory does not exist
(related to the fact that the GMC is not in $L^2$), but we see no essential physical difference compared to the region $b^2 \in (0,1/2)$.
In particular, in the hierarchical model, we show that there is a strictly positive physical mass for all $b^2 \in (0,1)$.

\begin{question} \label{q:2}
  Are the (infinite-volume massless) correlation functions analytic in $b^2 \in (0,1)$?
\end{question}


Given that the physical mass is strictly positive for $b\in (0,1)$,
at least in the hierarchical version of the sinh-Gordon model,
it is natural to ask for its asymptotics as $b\uparrow 1$.
For the Euclidean model, it is conjectured \cite[(2.27)]{MR4258290} that the mass behaves as $(1-b)^{1/4}$ as $b\uparrow 1$.
Our analysis does not provide information about the $b\uparrow 1$ asymptotics and it is unclear to us if one should expect
the same asymptotics in the hierarchical model.

\begin{question} \label{q:4}
  What are the asymptotics of the mass as $b\uparrow 1$?
\end{question}

For $b^2 =1$ and $b^2 \in (1,\infty)$ it would be interesting to understand what exactly happens.
Locally, there is a nontrivial limit of the regularized sinh-Gordon measure when the multiplicative counterterm
$\mu_\epsilon$ is chosen as in the normalization of the critical Gaussian multiplicative chaos (GMC)
or supercritical GMC, see \cite{MR3274356}.
The most likely scenario consistent with the conjectures from physics is that this limit is massless,
for example in the sense that $L^{2j}\He V_j$ not being bounded away from $0$ for $j\approx 0$,
and that the infinite volume limit does not exist.

\begin{question} \label{q:5}
  Is the hierarchical sinh-Gordon model for $b^2 \in [1,\infty)$ defined in terms of the critical and supercritical GMC massless
  in the sense that $L^{2j}\He V_j(0)$ is not bounded away from $0$?
\end{question}

For the Euclidean model there is a controversy in physics
related to the correct treatment of the zero mode, discussed in particular in \cite{MR4258290,MR4430201}.
The exact prediction for the $S$-matrix of the sinh-Gordon model depends on $b$ through $b+1/b$ and an initial proposal
was a ``strong-weak'' duality $b \leftrightarrow 1/b$.
From the path integral perspective, this seems basically impossible -- a fact well recognized in the physics literature.
There are various related situations such as the Fyodorov--Le Doussal--Rosso conjecture \cite{MR2882779} where such a duality is suggested
but convincing evidence instead supports a freezing transition.
Bernard--LeClair proposed the possibility that the massless theory describing the sinh-Gordon model for $b^2>1$
is not a free field but a nontrivial flow between two conformal field theories \cite{MR4430201}.
This proposal is based on the ``exact'' renormalization group beta functions proposed in \cite{PhysRevLett.86.4753,zbMATH01617535,MR4430201},
which cover both the sine- and sinh-Gordon models in a unified fashion. These beta functions are derived for a class of models
that is expected to include the sine-Gordon model, and the sinh-Gordon version is essentially obtained by the replacement $\beta = ib$.
Based on the differences of the perturbative structure of the sine- and sinh-Gordon models, such as the divergence of the second order term,
we are not convinced this procedure is legitimate.

\section*{Acknowledgements}

RB thanks Denis Bernard, Thierry Bodineau, Benoit Dagallier, Karol Kozlowski, Antti Kupiainen, and Christian Webb for various
discussions related to the topic of the paper.

OA and RB were partially supported by NSF grant DMS-2348045 and the Simons Collaboration
grant on Probabilistic Paths to Quantum Field Theory.
MH was supported by the FWF SFB F 1002 grant
Discrete Random Structures.
OZ was supported by the Israel science foundation grant \#615/24.

\section*{AI declaration}

OA and RB were supported in part by a grant of access to OpenAI models through the ChatGPT for Academic Researchers program.
This resource was only used for checking spelling and for typos.

\bibliography{all}

@misc{biermé2017generalizedrandomfieldslevys,
      title={Generalized random fields and L\'evy's continuity theorem on the space of tempered distributions}, 
      author={Hermine Biermé and Olivier Durieu and Yizao Wang},
      year={2017},
      eprint={1706.09326},
      archivePrefix={arXiv},
      primaryClass={math.PR},
      url={https://arxiv.org/abs/1706.09326}, 
}

@Article{MR648362,
    Author = {Brydges, D. and Fr{\"o}hlich, J. and Spencer, T.},
    Title = "The random walk representation of classical spin systems and correlation inequalities",
    Journal = "Commun. Math. Phys.",
    Year = "1982",
    Number = "1",
    Pages = "123--150",
    Volume = "83",
    Url = "https://projecteuclid.org/getRecord?id=euclid.cmp/1103920749"
}

@InCollection{MR2523458,
    author = "Brydges, D.C.",
    title = "Lectures on the renormalisation group",
    booktitle = "Statistical mechanics",
    publisher = "Amer. Math. Soc.",
    year = "2009",
    volume = "16",
    series = "IAS/Park City Math. Ser.",
    pages = "7--93",
}

@Article{MR693402,
    author = "Gaw{\polhk{e}}dzki, K. and Kupiainen, A.",
    title = "Triviality of {$\varphi ^{4}_{4}$}\ and all that in a hierarchical model approximation",
    journal = "J. Statist. Phys.",
    year = "1982",
    volume = "29",
    number = "4",
    pages = "683--698",
    url = "https://doi.org/10.1007/BF01011785"
}

@Article{MR859369,
    author = "Marchetti, D.H.U. and Perez, J.F.",
    title = "A hierarchical model exhibiting the {K}osterlitz-{T}houless fixed point",
    journal = "Phys. Lett. A",
    year = "1986",
    volume = "118",
    number = "2",
    pages = "74--76",
    url = "https://doi.org/10.1016/0375-9601(86)90650-X"
}

@Book{MR3155209,
    author = "Bakry, D. and Gentil, I. and Ledoux, M.",
    title = "Analysis and geometry of {M}arkov diffusion operators",
    publisher = "Springer, Cham",
    year = "2014",
    volume = "348",
    series = "Grundlehren der Mathematischen Wissenschaften",
    isbn = "978-3-319-00226-2; 978-3-319-00227-9",
    doi = "10.1007/978-3-319-00227-9",
    pages = "xx+552",
    url = "https://doi.org/10.1007/978-3-319-00227-9"
}

@Book{MR887102,
    author = "Glimm, J. and Jaffe, A.",
    publisher = "Springer-Verlag",
    title = "Quantum physics",
    year = "1987",
    edition = "Second",
    isbn = "0-387-96476-2",
    note = "A functional integral point of view",
    pages = "xxii+535",
}

@Book{MR0489552,
    author = "Simon, B.",
    publisher = "Princeton University Press",
    title = "The {$P(\phi )_{2}$} {E}uclidean (quantum) field theory",
    year = "1974",
    note = "Princeton Series in Physics",
    pages = "xx+392",
}

@Article{MR4061408,
    author = "Bauerschmidt, R. and Bodineau, T.",
    journal = "Commun. Math. Phys.",
    title = "Spectral {G}ap {C}ritical {E}xponent for {G}lauber {D}ynamics of {H}ierarchical {S}pin {M}odels",
    year = "2020",
    number = "3",
    pages = "1167--1206",
    volume = "373",
    doi = "10.1007/s00220-019-03553-x",
    eprint = "1809.02075",
    url = "https://doi.org/10.1007/s00220-019-03553-x"
}

@Book{MR3969983,
    author = "Bauerschmidt, R. and Brydges, D.C. and Slade, G.",
    publisher = "Springer, Singapore",
    title = "Introduction to a renormalisation group method",
    year = "2019",
    isbn = "978-981-32-9591-9; 978-981-32-9593-3",
    series = "Lecture Notes in Mathematics",
    volume = "2242",
    doi = "10.1007/978-981-32-9593-3",
    eprint = "1907.05474",
    pages = "xii+281",
    url = "https://doi.org/10.1007/978-981-32-9593-3"
}

@Article{MR3274356,
    author = "Rhodes, R. and Vargas, V.",
    journal = "Probab. Surv.",
    title = "Gaussian multiplicative chaos and applications: a review",
    year = "2014",
    pages = "315--392",
    volume = "11",
    doi = "10.1214/13-PS218",
    url = "https://doi.org/10.1214/13-PS218"
}

@article{MR395659,
    AUTHOR = "Ellis, R.S. and Monroe, J.L. and Newman, C.M.",
    TITLE = "The {GHS} and other correlation inequalities for a class of even ferromagnets",
    JOURNAL = "Commun. Math. Phys.",
    VOLUME = "46",
    YEAR = "1976",
    NUMBER = "2",
    PAGES = "167--182",
    URL = "http://projecteuclid.org/euclid.cmp/1103899588"
}

@article{MR4303014,
    AUTHOR = "Bauerschmidt, R. and Bodineau, T.",
    TITLE = "Log-{S}obolev inequality for the continuum sine-{G}ordon model",
    JOURNAL = "Comm. Pure Appl. Math.",
    VOLUME = "74",
    YEAR = "2021",
    NUMBER = "10",
    PAGES = "2064--2113",
    DOI = "10.1002/cpa.21926",
    URL = "https://doi.org/10.1002/cpa.21926",
    eprint = "1907.12308",
}

@Article{MR4767492,
    author = "Bauerschmidt, R. and Webb, C.",
    journal = "J. Eur. Math. Soc. (JEMS)",
    title = "The {C}oleman correspondence at the free fermion point",
    year = "2024",
    number = "9",
    pages = "3137--3241",
    volume = "26",
    doi = "10.4171/jems/1329",
    eprint = "2010.07096",
    url = "https://doi.org/10.4171/jems/1329"
}

@Article{MR4798104,
    author = "Bauerschmidt, R. and Bodineau, T. and Dagallier, B.",
    journal = "Probab. Surv.",
    title = "Stochastic dynamics and the {P}olchinski equation: {A}n introduction",
    year = "2024",
    pages = "200--290",
    volume = "21",
    doi = "10.1214/24-ps27",
    url = "https://doi.org/10.1214/24-ps27",
    eprint = "2307.07619"
}

@article{MR4258290,
    AUTHOR = "Konik, R. and L\'ajer, M. and Mussardo, G.",
    TITLE = "Approaching the self-dual point of the sinh-{G}ordon model",
    JOURNAL = "J. High Energy Phys.",
    YEAR = "2021",
    NUMBER = "1",
    PAGES = "Paper No. 014, 82",
    DOI = "10.1007/jhep01(2021)014",
    URL = "https://doi.org/10.1007/jhep01(2021)014"
}

@Article{2508.14806,
    author = "Bauerschmidt, R. and Mason, S. and Webb, C.",
    title = "Twisted {D}irac operators and fractional correlations of the massless sine-{G}ordon model at the free fermion point",
    archiveprefix = "arXiv",
    eprint = "2508.14806",
    note = "Preprint, arXiv:2508.14806"
}

@article{MR4430201,
    AUTHOR = "Bernard, D. and LeClair, A.",
    TITLE = "The sinh-{G}ordon model beyond the self dual point and the freezing transition in disordered systems",
    JOURNAL = "J. High Energy Phys.",
    YEAR = "2022",
    NUMBER = "5",
    PAGES = "Paper No. 022, 23",
    DOI = "10.1007/jhep05(2022)022",
    URL = "https://doi.org/10.1007/jhep05(2022)022"
}

@incollection{MR4680395,
    AUTHOR = "Kozlowski, K.K.",
    TITLE = "Bootstrap approach to {$1+1$}-dimensional integrable quantum field theories: the case of the sinh-{G}ordon model",
    BOOKTITLE = "I{CM}---{I}nternational {C}ongress of {M}athematicians. {V}ol. 5. {S}ections 9--11",
    PAGES = "4096--4118",
    PUBLISHER = "EMS Press, Berlin",
    YEAR = "[2023] \copyright 2023",
    ISBN = "978-3-98547-063-1; 978-3-98547-563-6; 978-3-98547-058-7",
}

@article{MR4607722,
    AUTHOR = "Kozlowski, K.K.",
    TITLE = "On convergence of form factor expansions in the infinite volume quantum {S}inh-{G}ordon model in {$1+1$} dimensions",
    JOURNAL = "Invent. Math.",
    VOLUME = "233",
    YEAR = "2023",
    NUMBER = "2",
    PAGES = "725--827",
    DOI = "10.1007/s00222-023-01192-7",
    URL = "https://doi.org/10.1007/s00222-023-01192-7"
}

@article{MR356761,
    AUTHOR = "Albeverio, S. and H{\o}egh-Krohn, R.",
    TITLE = "The {W}ightman axioms and the mass gap for strong interactions of exponential type in two-dimensional space-time",
    JOURNAL = "J. Func. Anal.",
    VOLUME = "16",
    YEAR = "1974",
    PAGES = "39--82",
    DOI = "10.1016/0022-1236(74)90070-6",
    URL = "https://doi.org/10.1016/0022-1236(74)90070-6"
}

@article{zbMATH01617535,
    author = "Bernard, D. and LeClair, A.",
    title = "Strong-weak coupling duality in anisotropic current interactions",
    journal = "Phys. Lett., B",
    volume = "512",
    number = "1-2",
    pages = "78--84",
    year = "2001",
    language = "English",
    doi = "10.1016/S0370-2693(01)00695-5",
    zbMATH = "1617535",
    Zbl = "0969.81595"
}

@Article{PhysRevLett.86.4753,
    author = "Gerganov, B. and LeClair, A. and Moriconi, M.",
    journal = "Phys. Rev. Lett.",
    title = "{B}eta {F}unction for {A}nisotropic {C}urrent {I}nteractions in {2D}",
    year = "2001",
    month = "May",
    pages = "4753--4756",
    volume = "86",
    doi = "10.1103/PhysRevLett.86.4753",
    issue = "21",
    numpages = "0",
    publisher = "American Physical Society",
    url = "https://link.aps.org/doi/10.1103/PhysRevLett.86.4753"
}

@Article{2408.16574,
    author = "Barashkov, N. and Oikarinen, J. and Wong, M.D.",
    title = "Small deviations of {G}aussian multiplicative chaos and the free energy of the two-dimensional massless {S}inh--{G}ordon model",
    year = "2024",
    eprint = "2408.16574",
    publisher = "arXiv",
    note = "Preprint, arXiv:2408.16574"
}

@article{MR5055747,
    AUTHOR = "Guillarmou, C. and Gunaratnam, T.S. and Vargas, V.",
    TITLE = "2d {S}inh-{G}ordon {M}odel on the {I}nfinite {C}ylinder",
    JOURNAL = "Commun. Math. Phys.",
    VOLUME = "407",
    YEAR = "2026",
    NUMBER = "5",
    PAGES = "Paper No. 97",
    DOI = "10.1007/s00220-026-05588-3",
    URL = "https://doi.org/10.1007/s00220-026-05588-3"
}

@Article{2408.16649,
    author = "Hofstetter, M. and Zeitouni, O.",
    title = "Decay of correlations for the massless hierarchical {L}iouville model in infinite volume",
    year = "2026",
    journal = "Commun. Math. Phys (to appear)",
}

@article{MR4528973,
    AUTHOR = "Hoshino, M. and Kawabi, H. and Kusuoka, S.",
    TITLE = "Stochastic quantization associated with the {$\exp(\Phi)_2$}-quantum field model driven by space-time white noise on the torus in the full {$L^1$}-regime",
    JOURNAL = "Probab. Theory Related Fields",
    VOLUME = "185",
    YEAR = "2023",
    NUMBER = "1-2",
    PAGES = "391--447",
    DOI = "10.1007/s00440-022-01126-z",
    URL = "https://doi.org/10.1007/s00440-022-01126-z"
}

@article{MR2642887,
    AUTHOR = "Robert, R. and Vargas, V.",
    TITLE = "Gaussian multiplicative chaos revisited",
    JOURNAL = "Ann. Probab.",
    VOLUME = "38",
    YEAR = "2010",
    NUMBER = "2",
    PAGES = "605--631",
    DOI = "10.1214/09-AOP490",
    URL = "https://doi.org/10.1214/09-AOP490"
}

@article{MR3269690,
    AUTHOR = "Gallavotti, G.",
    TITLE = "Renormalization group and divergences",
    JOURNAL = "J. Stat. Phys.",
    VOLUME = "157",
    YEAR = "2014",
    NUMBER = "4-5",
    PAGES = "743--754",
    DOI = "10.1007/s10955-014-1027-6",
    URL = "https://doi.org/10.1007/s10955-014-1027-6"
}

@article{MR829798,
    AUTHOR = "Kahane, J.-P.",
    TITLE = "Sur le chaos multiplicatif",
    JOURNAL = "Ann. Sci. Math. Qu\'ebec",
    VOLUME = "9",
    YEAR = "1985",
    NUMBER = "2",
    PAGES = "105--150",
}

@article{MR4683324,
    AUTHOR = "Chewi, S. and Pooladian, A.-A.",
    TITLE = "An entropic generalization of {C}affarelli's contraction theorem via covariance inequalities",
    JOURNAL = "C. R. Math. Acad. Sci. Paris",
    VOLUME = "361",
    YEAR = "2023",
    PAGES = "1471--1482",
    DOI = "10.5802/crmath.486",
    URL = "https://doi.org/10.5802/crmath.486"
}

@incollection{MR3497718,
    AUTHOR = "Heurteaux, Y.",
    TITLE = "An introduction to {M}andelbrot cascades",
    BOOKTITLE = "New trends in applied harmonic analysis",
    SERIES = "Appl. Numer. Harmon. Anal.",
    PAGES = "67--105",
    PUBLISHER = {Birkh\"auser/Springer, Cham},
    YEAR = "2016",
    ISBN = "978-3-319-27871-1; 978-3-319-27873-5",
}

@article{MR431355,
    AUTHOR = "Kahane, J.-P. and Peyri\`ere, J.",
    TITLE = "Sur certaines martingales de {B}enoit {M}andelbrot",
    JOURNAL = "Advances in Math.",
    VOLUME = "22",
    YEAR = "1976",
    NUMBER = "2",
    PAGES = "131--145",
    DOI = "10.1016/0001-8708(76)90151-1",
    URL = "https://doi.org/10.1016/0001-8708(76)90151-1"
}

@article{MR2882779,
    AUTHOR = "Fyodorov, Y.V. and Le Doussal, P. and Rosso, A.",
    TITLE = "Statistical mechanics of logarithmic {REM}: duality, freezing and extreme value statistics of {$1/f$} noises generated by {G}aussian free fields",
    JOURNAL = "J. Stat. Mech. Theory Exp.",
    YEAR = "2009",
    NUMBER = "10",
    PAGES = "P10005, 32",
    DOI = "10.1088/1742-5468/2009/10/p10005",
    URL = "https://doi.org/10.1088/1742-5468/2009/10/p10005"
}

@Article{2512.18927,
  author        = {Kusuoka, Seiichiro and Nagoji, Hirotatsu},
  title         = {Stochastic quantization of the weighted exponential {QFT}},
  year          = {2025},
  archiveprefix = {arXiv},
  eprint        = {2512.18927},
  publisher     = {arXiv},
}

@Article{2502.02554,
    author = "Garban, C. and Kupiainen, A.",
    title = "Energy field of critical {I}sing model and examples of singular fields in {QFT}",
    year = "2026",
    eprint = "2502.02554",
    publisher = "arXiv",
    note = "Preprint, arXiv:2502.02554"
}

@article{Vilas,
title = "Renormalizing small ball events for branching random walk",
author = "V. Winstein",
note = "In preparation",
}
\bibliographystyle{plain}

\end{document}